\documentclass[11pt]{article}
\usepackage{url}
\usepackage{amsmath}
\usepackage{amssymb}
\usepackage{amsthm,enumitem}
\usepackage{empheq}
\usepackage{latexsym}
\usepackage{appendix}
\usepackage{color} 
\usepackage[utf8]{inputenc}
\usepackage[T1]{fontenc}
\DeclareUnicodeCharacter{014D}{\=o}
\usepackage{geometry}
\geometry{hmargin=2.1cm,vmargin=2.4cm}


%
%
\newtheorem{Theorem}{Theorem}[part]
\newtheorem{Definition}{Definition}[part]

\newtheorem{Assumption}{Assumption}[part]
\newtheorem*{Assumptionn}{Assumption}
\newtheorem{Lemma}{Lemma}[part]
\newtheorem{Corollary}{Corollary}[part]
\newtheorem{Remark}{Remark}[part]

\newtheorem{Ansatz}{Ansatz}
\newtheorem*{sproof}{\textit{Sketch of the proof}}

\makeatletter \@addtoreset{equation}{section}

\@addtoreset{Definition}{section}

\@addtoreset{Theorem}{section}

\@addtoreset{Proposition}{section}

\@addtoreset{Property}{section}

\@addtoreset{Assumption}{section}

\@addtoreset{Corollary}{section}

\@addtoreset{Lemma}{section}

\@addtoreset{Remark}{section}

\@addtoreset{Example}{section}

\@addtoreset{Condition}{section}

%
%

\newcommand{\No}[1]{\left\|#1\right\|}     



\def \E{\mathbb{E}}
\def \F{\mathbb{F}}

\def \P{\mathbb{P}}

\def \R{\mathbb{R}}


\def\Cc{{\cal C}}

\def\Mc{{\cal M}}
\def\Nc{{\cal N}}

\def\Pc{{\cal P}}








\def\esup{{\rm ess \, sup}}



\def\eps{\varepsilon}



\setlength\parindent{0pt}

 \title{A tale of a Principal and many many Agents\footnote{The authors gratefully acknowledge the support of the ANR project Pacman, ANR-16-CE05-0027. This project was started while the three authors were visiting the department of statistics and applied probability at University of California Santa Barbara, whose hospitality is gratefully acknowledged.}}

 \author{Romuald {\sc Elie} \thanks{Universit\'e Paris--Est Marne--la--Vall\'ee, \texttt{romuald.elie@univ-mlv.fr}.} \and Thibaut {\sc Mastrolia} \footnote{CMAP, \'Ecole Polytechnique, Universit\'e Paris Saclay, Palaiseau, France, \texttt{thibaut.mastrolia@polytechnique.edu}. Part of this work was carried out while this author was working at Universit\'e Paris--Dauphine, PSL Research University, CNRS, CEREMADE, 75016 Paris, France, whose support is kindly acknowledged.  The Chair Financial Risks (Risk Foundation, sponsored by Soci\'et\'e G\'en\'erale) is also acknowledged for financial support.}\and Dylan {\sc Possama\"{i}} \footnote{Universit\'e Paris--Dauphine, PSL Research University, CNRS, CEREMADE, 75016 Paris, France, \texttt{possamai@cere-} \texttt{made.dauphine.fr}.}}

 \begin{document}

\maketitle

\begin{abstract}
\noindent In this paper, we investigate a moral hazard problem in finite time with lump--sum and continuous payments, involving infinitely many Agents with mean field type interactions, hired by one Principal. By reinterpreting the mean--field game faced by each Agent in terms of a mean field forward backward stochastic differential equation (FBSDE for short), we are able to rewrite the Principal's problem as a control problem of McKean--Vlasov SDEs. We review one general approaches to tackle it, introduced recently in \cite{bayraktar2016randomized,pham2016linear,pham2015bellman,pham2015discrete,pham2016dynamic} using dynamic programming and Hamilton--Jacobi--Bellman (HJB for short) equations, and mention a second one based on the stochastic Pontryagin maximum principle, which follows \cite{carmona2015forward}. We solve completely and explicitly the problem in special cases, going beyond the usual linear--quadratic framework. We finally show in our examples that the optimal contract in the $N-$players' model converges to the mean--field optimal contract when the number of agents goes to $+\infty$, thus illustrating in our specific setting the general results of \cite{cardaliaguet2015master}.

\vspace{5mm}

\noindent{\bf Key words:} Moral hazard, mean field games, McKean--Vlasov SDEs, mean field FBSDEs, infinite dimensional HJB equations. 
\vspace{5mm}

\noindent{\bf AMS 2000 subject classifications:} Primary: 91A13. Secondary: 93E20, 60H30.

\vspace{0.5em}
\noindent{\bf JEL subject classifications:} 	C61, C73, D82, D86, J33.
\end{abstract}

\section{Introduction}
To quote Mary Kom\footnote{Interview in \textit{Sportskeeda}, July 21th, 2012} before the 2012 summer olympic games, "the incentive of a medal at the biggest sporting arena in the world is what drives me. Before I hang my gloves, I want to win the Olympic medal, and my performance at London will decide my future in the sport." When it comes to make an effort in order to achieve a financial, economical, political or personal project, the notion of incentives becomes fundamental and strongly determines the dynamic of the associated project. Many crucial questions in finance and economics are, at their heart, a matter of incentives. How can the government motivate banks and key players in the financial sector to act for the global interest? How can a firm encourage customers to consume (if possible responsibly and economically)? How can an employer motivate her employees to produce more efficiently? 

\vspace{0.5em}
As the above examples make it clear, these questions naturally lead to distinguish between two entities: a \textit{Principal}, who is the one proposing to the second entity, the \textit{Agent}, a contract. The Agent can accept or reject the proposed contract, for instance if he has outside opportunities, but is then committed to it. Of course such a contract will in general involve some kind of effort from the Agent, in exchange of which he receives some sort of compensation, paid by the Principal. The main issue for the Principal is then that, most of the time, he is has only partial information about the actions of the Agent, and does not necessarily observe them. This kind of asymmetry of information leads to a situation which is classically called in the literature "moral hazard". The problem of the Principal is then to design a contract for the Agent, which maximises his own utility, and which of course is accepted by the Agent. This situation is then reduced to finding a Stackelberg equilibrium between the Principal and the Agent. More precisely, one can proceed in two steps to solve these type of problems:
\begin{itemize}[leftmargin=*]
\item given a fixed contract, the Principal computes the best reaction effort of the Agent.
\item Then, the Principal solves his problem by taking into account the best reaction effort of the Agent and computes the associated optimal contract.
\end{itemize} 
This so--called contract theory has known a renewed interest since the seminal paper of Holmstr\"om and Milgrom \cite{holmstrom1987aggregation}, which introduced a convenient method to treat them in a continuous time setting\footnote{There was before that an impressive literature on static or discrete--time models, which we will not detail here. The interested reader can check the references given in the classical books \cite{bolton2005contract,laffont2009theory}}. It was then extended notably by Sch\"attler and Sung \cite{schattler1993first,schattler1997optimal}, Sung \cite{sung1995linearity}, and of course the well--known papers of Sannikov \cite{sannikov2008continuous,sannikov2012contracts}, using an approach based on the dynamic programming and martingale optimality principles, two well--known tools for anyone familiar with stochastic control theory\footnote{There is an alternative approach using the Pontryagin stochastic maximum principle to characterise both the optimal action of the Agent and the optimal contract in terms of fully coupled systems of FBSDEs, initiated by Williams \cite{williams2009dynamic}, see the recent monograph by Cvitani\'c and Zhang \cite{cvitanic2012contract} for more details.}. More recently, this approach has been revisited by Cvitani\'c, Possama\"i and Touzi in \cite{cvitanic2014moral,cvitanic2015dynamic}, where the authors propose a very general approach which not only encompasses, but goes further than the previous literature. They showed that, under an appropriate formulation of the problem,
\begin{itemize}[leftmargin=*]
\item given a fixed contract, the problem of the Agent can always be interpreted in terms of a BSDE when he controls only the drift
, and,

\item the Principal's problem can be rewritten as a standard stochastic control problem with in general two state variables, namely the output controlled by the Agent, and his continuation utility.
\end{itemize}

\noindent One arguably non--realistic aspect of the above literature is that in practice, one would expect that the Principal will have to enter in contracts with several Agents, who may have the possibility of interacting with, or impacting each other. This situation is known in the literature as multi--Agents model. In a one--period framework, this problem was studied by Holmstr\"om \cite{holmstrom1982moral}, Mookherjee \cite{mookherjee1984optimal}, Green and Stokey \cite{green1983comparison}, or Demski and Sappington \cite{demski1984optimal} among others. Extensions to continuous--time models were investigated by Koo, Shim and Sung \cite{keun2008optimal} and very recently by \'Elie and Possama\"i \cite{elie2016contracting}. In a model with competitiveness amongst the Agents, assumed to work for a given firm controlled by the Principal, \cite{elie2016contracting} shows that the general approach initiated by \cite{cvitanic2015dynamic} has a counterpart in the $N$--Agents case as follows 
\begin{itemize}[leftmargin=*]
\item given fixed contracts for all the Agents, finding a Nash equilibria between them can be reduced to finding a solution to a multidimensional BSDE with quadratic growth in general, and,
\item the Principal's problem can once more be rewritten as a standard stochastic control problem with in general $2N$ state variables, namely the outputs controlled by the Agents, and their continuation utilities.
\end{itemize} 
\noindent The main issue here is that unlike with one--dimensional BSDEs, for which there are well established wellposedness theories, multi--dimensional systems are inherently harder. Hence, existence and uniqueness of solutions to systems of quadratic BSDEs have been shown to not always hold by Frei and dos Reis, and and have been subject to in--depth investigations, see for instance Xing and {\v{Z}}itkovi\'c \cite{xing2016class}, or Harter and Richou \cite{harter2016stability}, as well as the references therein. \cite{elie2016contracting} circumvents this problem by imposing wellposedness as a requirement for the admissibility of the contracts proposed by the Principal. Such a restriction would seem, {\it a priori}, to narrow down the scope of the approach, but the authors show that in general situations, the optimal contracts obtained are indeed admissible in the above sense. 

\vspace{0.5em}
\noindent The current paper can be understood as a continuation of \cite{elie2016contracting}, and considers the situation where one lets the number of Agents contracting with the Principal become very large. In terms of applications, one could for instance think about how a firm should provide electricity to a large population, how a government should encourage firms or people to invest in renewable energy by giving fiscal incentives, how city planners should regulate a heavy traffic or a crowd of people... The study of a large number of interacting players is also relevant for the so--called systemic risk theory, which consists in studying financial entities deeply interconnected and strongly subjected to the states of the others, see e.g. Carmona, Fouque and Sun \cite{carmona2013mean2}. All these questions rely on the celebrated \textit{mean field game theory} introduced by Lasry and Lions \cite{lasry2006jeux,lasry2006jeux2,lasry2007mean} and independently by Huang, Caines and Malham\'e \cite{huang2007invariance,huang2006large}. Mean field games theory (MFG for short) consists in the modelling of a large set of identical players who have the same objectives and the same state dynamics. Each of them has very little influence on the overall system and has to take a decision given a mean field term driven by the others. The problem is then to find an equilibrium for the studied system. We refer to the notes of Cardaliaguet \cite{cardaliaguet2010notes} and to the book of Bensoussan, Frehse and Yam \cite{bensoussan2013mean} for nice overviews of this theory. The associated optimal control problems, also coined mean--field type control problems lie at the very heart of our approach and have only been approached recently in the literature, see e.g. Carmona and Delarue \cite{carmona2015forward} or Pham and Wei \cite{pham2015bellman,pham2015discrete,pham2016dynamic}.


\vspace{0.5em}
\noindent Let us now describe in further details our problem. We focus our attention on the situation where the Principal has to hire infinitely many Agents who are supposed to be identical, and who can control the drift of an output process, representing the project that the Agent has to manage on behalf of the Principal. The value of this project is affected through his drift by both its law, and the law of the control, representing both the aggregated impact of the other Agents. We provide a weak formulation of the mean--field version of this model, inspired by \cite{carmona2015probabilistic} and proceed to show that solving the mean field system associated with the Agent's problem is equivalent to solving a kind of mean field BSDE\footnote{Several papers have investigated related but not exactly similar BSDEs, see among others \cite{buckdahn2009mean2,buckdahn2014mean,buckdahn2009mean,carmona2013mean}.}. Even though such objects are actually easier to investigate than the aforementioned systems of quadratic BSDEs, it remains that their wellposedness requires in general very strong assumptions, see \cite{carmona2015probabilistic}. Following in the footsteps of \cite{elie2016contracting}, we therefore embed wellposedness into the definition of an admissible contract. All of this is proved in Section \ref{section:solvingMF}. Once this is done, we are on the right track traced by \cite{cvitanic2015dynamic}, and turn to the Principal's problem in Section \ref{section:mckean}. The latter can then be shown to be equivalent to solving a (difficult) mean--field type control problem with the expected two states variables. We believe this result to be our main technical and practical contribution, since this paper is the first, to the best of our knowledge, in the literature to solve general mean--field Principal--Agent problems\footnote{Let us nonetheless mention the recent contribution of Djehiche and Hegelsson \cite{djehiche2015principal} which considers a related but different problem with only one Agent whose output solves a McKean--Vlasov SDE. Their approach relies on a stochastic maximum principle.}. This opens up the way to considering extremely rich situations of interactions between many Agents, with subtle ripple effects.

\vspace{0.5em}
 \noindent We next apply in Section \ref{section:ex} our general results to specific examples. More precisely, we consider an extension of Holmstr\"om and Milgrom \cite{holmstrom1987aggregation} to the mean--field case, where the drift of the output process is a linear function of the effort of the Agent, the output itself, its mean, its variance and the mean of the effort of the Agent. In particular, we show that the optimal effort is deterministic, by providing a \textit{smooth} solution to the HJB equation associated with the Principal's problem. We then extend this example to the case where the Principal is no longer risk--neutral, and adopts a mean--variance attitude, in the sense that his criterion is impacted by both the variance of the output (he does not want projects with volatile values) and the variance of the salary given to the Agent (he does not want to create discrimination inside the firm). Our final point concerns the rigorous links between the $N-$players' model and the mean--field limit. Proving the convergence in a general setting is an extremely hard problem, see for instance the recent article of Cardaliaguet, Delarue, Lasry and Lions \cite{cardaliaguet2015master}. Therefore, we have concentrated our attention to the above examples, and showed in this context that the optimal contract in the $N-$Agents' model, as well as their optimal actions, indeed converged to the mean--field solution.

\vspace{1em}
\noindent {\bf General notations: } Let $\R$ be the real line, $\mathbb R_+$ the non--negative real line and $\mathbb R_+^\star$ the positive real line. Let $m$ and $n$ be two positive integers. We denote by $\mathcal M_{m,n}(\mathbb R)$ the set of matrices with $m$ rows and $n$ columns, and simplify the notations when $m=n$, by using $\mathcal M_{n}(\mathbb R):=\mathcal M_{n,n}(\mathbb R)$. We denote by ${\rm I}_n\in \mathcal M_n(\R)$ the identity matrix of order $n$. For any $M\in \mathcal M_{m,n}(\mathbb R)$, we define $M^\top \in \mathcal M_{n,m}$ as the usual transposition of the matrix $M$. For any $x\in \mathbb R^n$, we set $\text{diag}(x)\in \mathcal M_{n}(\R)$ such that $(\text{diag}(M))^{i,j}= \mathbf 1_{i=j} x^i$, $1\leq i,j\leq n$. We denote by $\mathbf 1_{n,n}$ the matrix with coefficients $(\mathbf 1_{n,n})^{i,j}=1$ for any $1\leq i,j\leq N$. We will always identify $\mathbb R^n$ with $\mathcal M_{n,1}(\mathbb R)$. Besides, for any $X\in\mathbb R^n$, we denote its coordinates by $X^1,\dots,X^n$. We denote by $\|\cdot\|_n$ the Euclidian norm on $\mathbb R^n$, which we simplify to $|\cdot|$ when $N=1$. The associated inner product between $x\in\mathbb R^n$ and $y\in\mathbb R^n$ is denoted by $x\cdot y$. We also denote by $\mathbf 0_n$ and $\mathbf 1_n$ the $n-$dimensional vector $(0,\dots,0)^\top$ and $(1,\dots,1)^\top$ respectively and $(e_i)_{1\leq i\leq N}$ the canonical basis of $\mathbb R^N$. Similarly, for any $X\in\mathbb R^n$, we define for any $i=1,\dots,n,$ $X^{-i}\in\mathbb R^{n-1}$ as the vector $X$ without its $i$th component, that is to say $X^{-i}:=(X^1,\dots,X^{i-1},X^{i+1},\dots,X^n)^\top$. For any $X,Y$ in $\R^N$ we will write $X\leq Y$ for the classical lexicographic order \textit{i.e.} $X\leq Y$ if $X^i\leq Y^i$, $\forall 1\leq i\leq N$. Finally, for any $(a,\tilde a)\in \mathbb R\times\mathbb R^{n-1}$, and any $i=1,\dots,n$, we define the following $n-$dimensional vector
$$a\otimes_i \tilde a:= (\tilde a^1,\dots, \tilde a^{i-1}, a, \tilde a^i ,\dots, \tilde a^{n-1}).$$

For any Banach space $(E,\|\cdot \|_E)$, let $f$ be a map from $E\times \mathbb R^n$ into $\mathbb R$. For any $x\in E$, we denote by $\nabla_a f(x,a)$ the gradient of $a\longmapsto f(x,a),$ and we denote by $\partial_{a a} f(x,a)$ the Hessian matrix of $a\longmapsto f(x,a)$. When $n=1$, we write $f_a(x,a)$ for the derivative of $f$ with respect to the variable $a$.

 Finally, we denote by $\mathcal D$ the set of deterministic function from $[0,T]$ into $\R$.

\section{The mean field problem}\label{intro:MFM}
This section is dedicated to the description of the mean--field problem studied in this investigation. The paradigme considered is the following, we consider an entire crowd of Agents hired by one Principal and we focus on a representative one interacting with the theoretical distribution of the infinite number of other players. 

\subsection{Stochastic basis and spaces} 
In this section we recall some notations used in \cite{carmona2015probabilistic}. Fix a positive integer $N$ and a positive real number $T$.
 For any measurable space $(\mathcal S, \mathcal{F}_{\mathcal S})$, we will denote by $\mathcal P (\mathcal \mathcal S)$ the set of probability measures on $\mathcal S$. Let $(E,\| \cdot\|_E)$ be a Banach space. We will always refer to the Borel sets of $E$ (associated to the topology induced by the norm $\| \cdot\|_E$) by $\mathcal B(E)$. We will also endow this set with the topology induced by the weak convergence of probability measures, that is to say that a sequence $(m_n)_n$ in $\mathcal P(E)$ converges weakly to $m\in \mathcal P(E)$ if for any bounded continuous map $\varphi: E\longrightarrow E$, we have
$$\lim_{n\to +\infty} \int_E \varphi(x) dm_n(x)=\int_E \varphi(x) dm(x). $$ 
This convergence is associated to the classical Wasserstein distance of order $p\geq 1$, defined for any $\mu$ and $\nu$ in $\mathcal P(E)$ by
$$\mathcal W_{E,p}(\mu,\nu)=\left(\inf_{\pi \in \Gamma(\mu,\nu)}\int_E \|x-y\|^p_E\pi(dx,dy)\right)^\frac1p, $$
where $\Gamma(\mu,\nu)$ denotes the space of all joint distributions with marginal laws $\mu$ and $\nu$. More precisely, convergence in the Wasserstein distance of order $p$ is equivalent to weak convergence plus convergence of the first $p$th moments.

\vspace{0.5em}
Let $\mathcal C:=\mathcal C([0,T];\mathbb R)$ be the space of continuous maps from $[0,T]$ into $\mathbb R^N$, endowed with the norm $\| \omega\|_{T,\infty}$, where for any $t\in [0,T]$, we have defined $\| \omega\|_{t,\infty}=\sup_{s\in [0,t]} \|\omega_s\|.$ We denote $\Omega:= \mathbb R^N\times \mathcal C$. 
and define the coordinate processes in the space $\Omega$ by
$$\psi(x,\omega):= x,\; W(x,\omega):= \omega,\; \forall (x,\omega)\in \Omega. $$
 
We fix a probability measure $\lambda _0$ in $\mathcal P(\mathbb R)$, which will serve as our initial distribution for the state of the different Agents of the model. We will always assume that $\lambda_0$ has exponential moments of any order, that is to say
\begin{equation}\label{eq:lambda0}
\int_{\mathbb R^N}\exp(px)\lambda_0(dx)<\infty,\ \text{for any $p\geq 0$}.
\end{equation}
We denote by $\mathbb P$ the product of $\lambda_0$ with the Wiener measure defined on $\mathcal B(\mathcal C)$. For any $t\in [0,T]$, we define $\mathcal F_t$ as the $\mathbb P-$augmentation of the $\sigma-$field $\sigma((\psi,W_s)_{s\in [0,t]})$, as well as the filtration $\mathbb F:= (\mathcal F_t)_{t\in [0,T]}$. The filtered probability space we will be interested in is $(\Omega, \mathcal B(\Omega),\mathbb F,\mathbb P)$. Expectations or conditional expectations under $\P$ will be denoted by $\E[\cdot]$ and $\E[\cdot|\cdot]$.We also denote by $\mathcal T_{[0,T]}$ the set of $\mathbb F-$stopping times which take value in $[0,T]$.
 
 \vspace{0.5em}
For any finite dimensional normed space $(E,\No{\cdot}_E)$, $ \mathcal P_{roc}(E)$ (resp. $\mathcal P_{rev}(E)$) will denote the set of $E-$valued, $\mathbb F-$adapted processes (resp. $\mathbb F-$predictable processes) and for any $p\geq 1$ and $\ell>0$
 \begin{align*}
 \mathbb S^p(E)&:= \left\{Y\in \mathcal P_{roc}(E),\ \text{c\`adl\`ag, such that } \| Y\|_{\mathbb S^p(E)}^p:=  \mathbb E\bigg [\sup_{t\in [0,T]}\No{Y_t}_E^p\bigg ]<+\infty\right\},\\
 \mathbb S_{\exp}(E)&:= \left\{Y\in  \mathcal P_{roc}(E),\; \text{c\`adl\`ag and such that }  \mathbb E\left[\exp\left(p\underset{0\leq t\leq T}{\sup}\|Y_t\|_E \right) \right]<+\infty,\; \forall p\geq 1 \right\},\\
  \mathbb H^p(E)&:= \left\{Z\in \mathcal P_{rev}(E),\; \| Z\|_{\mathbb H^p}^p:= \mathbb E\left[\left(\int_0^T \|Z_t\|_E^2dt\right)^{p/2}\right]<+\infty\right\},\\
  \mathbb H_{\rm exp}^{\ell}(E)&:= \left\{Z\in \mathcal P_{rev}(E),\; \mathbb E\left[\exp\left(m\int_0^T \|Z_t\|_E^\ell dt \right) \right]<+\infty,\; \forall m\geq 1 \right\}.
 \end{align*}
 
 \vspace{0.5em}
In this paper we will almost always consider $E-$valued processes $X:[0,T]\times\mathcal C\longrightarrow E$ which will be $\F-$optional. Notice that such processes are automatically non--anticipative in the sense that for any $(x,x')\in\mathcal C\times\mathcal C$ such that for some $t\in[0,T]$, $x_{\cdot\wedge t}=x'_{\cdot\wedge t}$, we have
$$X(s,x)=X(s,x'),\ \text{for any $s\leq t$}.$$
Fix some $\sigma-$algebra $\mathcal G$ on $(\Omega,\mathcal B(\Omega)$. For any $E-$valued and $\mathcal G-$measurable random variable $F$ on $\Omega$, we denote by $\mathcal L(F):= \mathbb P\circ F^{-1} \in\mathcal P(\Omega)$ the law of $F$ under $\P$ and for any $p>0$, we set
$$L^p(\Omega,\mathcal G,E):= \left\{F:\Omega\longrightarrow E, \ \text{$\mathcal G-$measurable, s.t. }\mathbb E[\|F\|_E^p]<+\infty \right\}. $$

\subsection{Output, controls, drift, volatility, cost and discount factor}
\noindent The problems that we will consider necessitate to introduce a certain number of maps.
We first introduce a volatility process $\sigma:[0,T]\times\mathcal C\longrightarrow \R\backslash\{0\}$. It will be assumed to satisfy the following.

\begin{Assumptionn}[$\boldsymbol{\sigma}$]\label{assum:ssigma}
The map $\sigma$ is bounded by some positive constant $M$, $\mathbb F-$optional, and for every $(t,x)\in[0,T]\times\mathcal C$, $\sigma(t,x)$ is invertible with inverse bounded by some positive constant $M$. Moreover, $\sigma$ is such that the following stochastic differential equation admits a unique strong solution
$$X_t=\psi+\int_0^t\sigma_s(X)dW_s,\ t\in[0,T],\ \P-a.s.$$
 \end{Assumptionn}
Let us define the following set
$$\mathfrak P(\mathbb R):=\left\{q:[0,T]\longrightarrow\mathcal P(\R),\ \text{measurable}\right\}.$$

\noindent Our second object will be the so--called drift function, which drives the value of the output. It will be a map $b$ from $[0,T] \times \mathcal C\times \mathcal P(\mathcal C)\times \mathcal P(\mathbb R) \times  \mathbb R$ into $\mathbb R$. For any $(p,\ell,\eta)\in [1,+\infty)\times (0,+\infty)\times (1,+\infty)$, we will consider the following assumption on $b$
\begin{Assumptionn}[$\mathbf{B}^{p,\ell,\eta}$]\label{hyp:bN}

\vspace{0.5em}
$(i)$ For any $(\mu,q,a)\in \mathcal P(\mathcal C)\times \mathcal P(\mathbb R)\times \R $,
the map $(t,x)\longmapsto b(t,x,\mu,q,a)$ is $\F-$optional.

\vspace{0.5em}
$(ii)$ For any $(t,x,\mu,q)\in [0,T]\times \mathcal C \times  \mathcal P(\mathcal C)\times \mathcal P(\mathbb R)$, the map $a\longmapsto b(t,x,\mu,q,a)$ is continuously differentiable on $\mathbb R$. 

\vspace{0.5em}
$(iii)$ There is a positive constant $C$ such that for any $(t,x,\mu,q,a) \in [0,T]\times  \mathcal C \times  \mathcal P(\mathcal C)\times \mathcal P(\mathbb R) \times \R$, we have
$$ |b(t,x,\mu,q,a)|\leq b^0(\|x\|_{t,\infty})+ C\bigg( 1 +\left(\int_{\mathcal C} \|z\|_{t,\infty}^p \mu(dz)\right)^{\frac1p}+\left(\int_{\mathbb R} |z|^p q(dz)\right)^{\frac1p}+|a|^\ell\bigg),$$
$$|\partial_ab(t,x,\mu,q,a)|\leq C\bigg( 1+b^1(\|x\|_{t,\infty})  +\left(\int_{\mathcal C} \|z\|_{t,\infty}^p \mu(dz)\right)^{\frac1p}+\left(\int_{\mathbb R} |z|^p q(dz)\right)^{\frac1p} +|a|^{\ell-1}\bigg),$$
where $b^0,b^1: \mathbb R_+  \longrightarrow \mathbb R_+$ are such that 
$$\mathbb E\left[\exp\left(\frac{3\eta}{2} M^2\int_0^T|b^0(\| X\|_{t,\infty})|^2 dt\right)+\exp\left(h \int_0^T|b^1(\| X\|_{t,\infty})|^{h'} dt\right)\right]<+\infty, \, \forall h,h'>1, $$
with $M$ denoting the constant defined in Assumption $(\boldsymbol{\sigma})$. \end{Assumptionn}

 \begin{Remark}
Letting the drift $b$ depend on the laws of the output and the control allows for instance to incorporate the following effects.
\begin{itemize}[leftmargin=*]
\item[$(i)$] The law value of the firm can have a positive or negative impact on its future evolution: when things are going well, they have a tendency to keep doing so, and conversely. 
\item[$(ii)$] Similarly, if all the crowd of Agents in the firm are working on average very hard, this could have a ripple effect on the whole firm. Hence a dependence on the law of the control itself.
\end{itemize} 
Furthermore, the growth assumptions made here are for tractability, and are mainly due to the fact that we are going at some point to work with quadratic $($of possibly mean field type$)$ BSDEs. They allow for instance for drifts which are linear in $x$, with a constant small enough. Indeed, in this case $X$ is basically like a Brownian motion $($recall that $\sigma$ is bounded$)$ so that its square will have exponential moments, provided that they are of a small order.
\end{Remark}
 The third object will be a discount factor $k$, that is a map from $[0,T]\times \mathcal C\times \mathcal P(\mathcal C)\times  \mathcal P(\mathbb R)$ to $\mathbb R$, satisfying the following standing assumption.
\begin{Assumptionn}[$\mathbf{K}$]\label{hyp:k} The map
$k$ is bounded, and $\F-$optional for any $(\mu,q)\in \mathcal P(\mathcal C)\times\mathcal P(\mathbb R)$.
\end{Assumptionn}
\begin{Remark}
The discount factor is here to model a possible impatience of the Agents, who would value more having utility now than later. Letting it depend on the law of $X$ and the law of the control played by the Agents is again for generality and possible ripple effects. Agents may for instance become more impatient if the firm is currently doing extremely well. We could have let $k$ also depend on the control itself, but it would have further complicated our definition of admissible controls. We therefore decided to refrain from it, but it is not a limit of the theory, {\it per se}.
\end{Remark}

Finally, we will need to consider a cost function $c:[0,T]\times \mathcal C\times\mathcal P(\mathcal C)\times\mathcal P(\mathbb R)\times \mathbb R \longrightarrow \mathbb R^+$ satisfying, for some $(p,\ell,m,\underline m)\in[1,+\infty)\times(0,+\infty)\times[\ell,+\infty) \times(\ell-1,+\infty)$
\begin{Assumptionn}[$\mathbf{C}^{p,\ell,m,\underline m}$]\label{hyp:cN}
For any $(\mu,q,a)\in \mathcal P(\mathcal C)\times \mathcal P(\mathbb R)\times \R$, the map $(t,x)\longmapsto c(t,x,\mu,q,a)$ is $\F-$optional. Moreover, the map $a\longmapsto c(t,x,\mu, q,a)$ is increasing, strictly convex and continuously differentiable for any $(t,x,\mu,q)\in [0,T]\times \mathcal C\times \mathcal P(\mathcal C)\times\mathcal P(\mathbb R)$.
Finally, there exists $C>0$ such that for any $(t,x,\mu,q,a)\in [0,T]\times \mathcal C\times\mathcal P(\mathcal C)\times\mathcal P(\mathbb R)\times \R$
$$0\leq c(s,x,\mu,q,a)\leq C \bigg(1+\|x\|_{s,\infty}+\left(\int_{\mathcal C} \|z\|_{s,\infty}^p \mu(dz)\right)^{\frac1p}+\left(\int_{\mathbb R^N} |z|^p q(dz)\right)^{\frac1p}+|a|^{\ell+m}\bigg), $$
$$ |\partial_ac(s,x,\mu,q,a)|\geq C  |a|^{\underline m}, \text{ and } \overline{\lim}_{|a|\to \infty} \frac{c(s,x,\mu,q,a)}{|a|^\ell}=+\infty.$$

\end{Assumptionn}
\begin{Remark}
Once again, the cost faced by the Agents can be influenced by the past states of the firm and its law, as well as the law of the control played by the other Agents. It helps to model the fact that Agents may find it hard to work when everyone else is working $($the classical free rider problem$)$, or if the situation of the firm is on average extremely good. As for the growth conditions assumed, they are basically here to ensure that the Hamiltonian of the Agent, which will involve both $b$ and $c$, has at least one maximiser in $a$, thanks to nice coercivity properties.
\end{Remark}

We now turn to the definition of the notion of admissible control. We denote by $\mathcal A$ the set of $\R-$valued and $\F-$adapted processes $\alpha$ such that for some $\varepsilon>0$, for every $(h,\mu,q)\in\R_+\times\Pc(\Cc)\times\mathfrak P(\R)$ and for the same $m$ and $\ell$ as the ones appearing in Assumption $(\mathbf{C}^{p,\ell,m,\underline m})$
\begin{equation}\label{eq:admm}
\mathbb E\left[\left(\mathcal E\left( \int_0^T   \sigma_t^{-1}(X) b(t, X, \mu, q_t,\alpha_t)dW_t \right)\right)^{1+\eps}\right]+\mathbb E\left[\exp\left(h\int_0^T|\alpha_t|^{\ell+m}dt\right)\right]<+\infty.
\end{equation}
Then, for any $(\mu,q,\alpha)\in\mathcal P(\Cc)\times\mathfrak P(\R)\times \mathcal A$, we can define, thanks to Assumption \ref{hyp:bN}, a probability $\mathbb P^{\mu,q,\alpha}$ such that
\begin{equation}\label{def:probastar}
\frac{d\mathbb P^{\mu,q,\alpha}}{d\mathbb P}= \mathcal E\left( \int_0^T \sigma_t^{-1}(X)b(t,X, \mu, q_t, \alpha_t)dt\right).
\end{equation}
Hence, we can define a Brownian motion under $\mathbb P^{\mu,q,\alpha}$ by
$$W^{\mu,q,\alpha}_t:= W_t-\int_0^t  \sigma_s^{-1}(X)b(s,X,\mu, q_s, \alpha_s)ds,\ t\in[0,T],\ \P-a.s.,$$
so that we can rewrite 
 $$X_t=\psi+\int_0^t b(s,X,\mu,q_s,\alpha_s) ds+ \int_0^t\sigma_s(X) dW^{\mu,q,\alpha}_s,\ t\in[0,T],\ \P-a.s. $$

\subsection{The Agent's problem: a classical mean--field game}  

We consider that a representative Agent has utility functions $U_A:\mathbb R\longrightarrow \mathbb R$ and $u_A:[0,T]\times\mathcal C\times\mathcal P(\mathcal C)\times\mathcal P(\mathbb R)\times\mathbb R_+\longrightarrow\mathbb R$. We will assume

\begin{Assumptionn}[$\mathbf U$]\label{assump:utility}
The map $U_A$ is non--decreasing and concave, and for any $(t,x,\mu,q)\in[0,T]\times\mathcal C\times\mathcal P(\mathcal C)\times\mathcal P(\mathbb R)$, the map $\chi\longmapsto u_A(t,x,\mu,q,\chi)$ is non--decreasing and concave. Furthermore, for any $(\mu,q,\chi)\in\mathcal P(\mathcal C)\times\mathcal P(\mathbb R^N)\times\mathbb R_+$, the map $(t,x)\longmapsto u_A(t,x,\mu,q,\chi)$ is $\F-$optional, and there is a positive constant $C$ and a concave map $\tilde u_A:\mathbb R_+\longrightarrow \mathbb R_+$ such that for any $(t,x,\mu,q,\chi) \in [0,T]\times  \mathcal C \times  \mathcal P(\mathcal C)\times \mathcal P(\mathbb R^N) \times \R_+$, we have
$$ |u_A(t,x,\mu,q,\chi)|\leq C\bigg( 1+\| x\|_{t,\infty} +\bigg(\int_{\mathcal C} \|z\|_{t,\infty}^p \mu(dz)\big)^{\frac1p}+\bigg(\int_{\mathbb R} |z|^p q(dz)\bigg)^{\frac1p}+\tilde u_A(\chi)\bigg).$$

\end{Assumptionn}
The representative Agent is hired at time $0$ by the Principal on a "take it or leave it" basis. The Principal proposes a contract to the Agent, which consists in two objects.
\begin{itemize}[leftmargin=*]
\item[$(i)$] A stream of payment $\chi$, which is a $(\R_+^\star)-$valued and $\mathbb F-$adapted process, such that $\chi_t$ represents the instantaneous payments made to the Agent at time $t$.
\item[$(ii)$] A final payment $\xi$, which is a $\mathbb R-$valued and $\mathcal F_T-$measurable random variable which represents the amount of money received by the Agent at time $T$.
\end{itemize}  
A contract will always refer to the pair $(\chi,\xi)$, and the set of contracts will be denoted by $\mathfrak C$. For given $(\mu,q,\alpha)\in\Pc(\Cc)\times\mathfrak P(\R)\times \mathcal A$, representing respectively an arbitrary distribution of the output managed by the infinitely many other Agents, an arbitrary distribution of the actions chosen by these infinitely many Agents, and an action chosen by the representative Agent, his associated utility is given by
\begin{align*}
v_0^{A}(\chi,\xi,\mu,q,\alpha):=&\ \mathbb E^{\mathbb P^{\mu,q,\alpha}}\left[K_{0,T}^{X,\mu,q}U_{A}(\xi)+ \int_0^T K_{0,s}^{X,\mu,q,\alpha}\big(u_A(s,X, \mu,q_s,\chi_s)-c(s,X, \mu,q_s,\alpha_s)\big)ds \right],
\end{align*}
where for any $(x,\mu,q,t,s)\in\Cc\times\Pc(\Cc)\times\mathfrak P(\R)\times[0,T]\times[t,T]$
$$K_{t,s}^{x,\mu,q}:= \exp\left(-\int_t^s  k(u,x,\mu,q_s)du\right).$$
In another words, the Agent profits from the (discounted) utilities of his terminal payment, and his inter--temporal payments, net of his instantaneous cost of working. Overall, the problem of the Agent corresponds to the following maximisation
\begin{align}
\label{eq:valueagent}V_0^A(\chi,\xi,\mu,q):=\sup_{\alpha\in \mathcal A} v_0^{A}(\chi,\xi,\mu, q,\alpha).
\end{align}

As usual in mean--field games, it is immediate that the best reaction function of the Agent is a standard (albeit non--Markovian) stochastic control problem, where $\mu$ and $q$ only play the role of parameters. As such, it is a well--known result that, since the Agents can only impact the drift of the output process, the study of the dynamic version of the Agent's value function requires to introduce first the following family of BSDEs, indexed by $(\mu,q,\alpha)\in\mathcal P(\mathcal C)\times \mathfrak P(\mathbb R) \times\mathcal A$
\begin{align}\label{edsr:agent:loisfixes}
Y_t^{ \mu,q, \alpha}(\chi,\xi)=&\ U_A(\xi)+\int_t^T g\big(s,X, Y_s^{\mu,q, \alpha}(\chi,\xi), Z_s^{\mu,q,\alpha}(\chi,\xi), \mu,q_s,\alpha_s,\chi_s\big)  ds-\int_t^T Z_s^{ \mu,q, \alpha}(\chi,\xi) \sigma_s(X) dW_s,
\end{align}
where for any $(s,x,y,z,\mu,q,a,\chi)\in [0,T]\times \mathcal C\times \mathbb R\times \mathbb R\times \mathcal P(\mathcal C)\times \mathfrak P(\mathbb R)\times \mathbb R\times\mathbb R_+$ we defined
\begin{equation}\label{def:g}g(s,x,y,z,\mu,q_s,a,\chi):=zb(s,x,\mu,q_s,a)+u_A(s,x,\mu,q_s, \chi)-k(s,x,\mu,q_s)y-c(s,x, \mu,q_s,a) .
\end{equation}
We begin by defining a solution to BSDE \eqref{edsr:agent:loisfixes}.
\begin{Definition}\label{def:sol:edsr:agent:loisfixes}
We say that a pair of processes $(Y^{\mu,q,\alpha}(\chi,\xi),Z^{\mu,q,\alpha}(\chi,\xi))$ solves the BSDE \eqref{edsr:agent:loisfixes} if $Y^{\mu,q,\alpha}(\chi,\xi)\in\mathbb S_{\exp}(\mathbb R)$, $Z^{\mu,q,\alpha}(\chi,\xi)\in\mathbb H^p(\mathbb R)$ for any $p\geq 0$ and \eqref{edsr:agent:loisfixes} holds for any $t\in[0,T]$, $\P-a.s.$
\end{Definition}
We now make the aforementioned link between $v_0^A(\chi,\xi,\mu, q, \alpha)$ and BSDE \eqref{edsr:agent:loisfixes} clear with the following Lemma, \textcolor{black}{whose proof is classical, but which we recall in the Appendix for comprehensiveness.}
\begin{Lemma}\label{lemma:soledsrloisfixes}
Let Assumptions $(\mathbf{B}^{p,\ell,\eta})$, $(\boldsymbol{\sigma})$, $\mathbf{(U)}$, $\mathbf{(K)}$ and $(\mathbf{C}^{p,\ell, m,\underline m})$ be true for some $(p,\ell,m,\underline m,\eta)\in[1,+\infty)\times(1,+\infty)\times[\ell,+\infty)\times (\ell-1,+\infty)\times (1,+\infty)$. For any $(\mu,q,\alpha)\in\mathcal P(\mathcal C)\times \mathfrak P(\mathbb R) \times\mathcal A$, there exists a unique solution $(Y^{\mu,q,\alpha}(\chi,\xi),Z^{\mu,q,\alpha}(\chi,\xi))$ to BSDE \eqref{edsr:agent:loisfixes}. Moreover, it satisfies 
$$\mathbb E\left[Y_0^{\mu,q,\alpha}(\chi,\xi)\right]=v_0^{A}(\chi,\xi,\mu, q, \alpha).$$
\end{Lemma}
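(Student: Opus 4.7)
The key observation is that, once $(\mu,q,\alpha)$ are frozen, the driver $g$ defined in \eqref{def:g} is \emph{affine} in $(y,z)$: the $z$--coefficient equals $b(s,X,\mu,q_s,\alpha_s)$, and the $y$--coefficient equals $-k(s,X,\mu,q_s)$, which is bounded by Assumption $(\mathbf{K})$. The natural strategy is therefore to absorb the $z$--term via the Girsanov change of measure \eqref{def:probastar}, which is well defined since $\alpha\in\Ac$ and \eqref{eq:admm} ensures in particular that $d\mathbb P^{\mu,q,\alpha}/d\mathbb P$ is a true $\mathbb P$--martingale. Under $\mathbb P^{\mu,q,\alpha}$, the BSDE \eqref{edsr:agent:loisfixes} rewrites, using the Brownian motion $W^{\mu,q,\alpha}$, as the linear BSDE
$$Y_t=U_A(\xi)+\int_t^T\bigl(u_A(s,X,\mu,q_s,\chi_s)-k(s,X,\mu,q_s)Y_s-c(s,X,\mu,q_s,\alpha_s)\bigr)ds-\int_t^T Z_s\sigma_s(X)dW_s^{\mu,q,\alpha},$$
whose driver is now purely linear in $y$ with bounded coefficient. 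Multiplying by the discount $K_{0,\cdot}^{X,\mu,q}$ and taking conditional expectations yields the explicit Feynman--Kac representation
$$Y_t=\mathbb E^{\mathbb P^{\mu,q,\alpha}}\!\left[K_{t,T}^{X,\mu,q}U_A(\xi)+\int_t^T K_{t,s}^{X,\mu,q}\bigl(u_A(s,X,\mu,q_s,\chi_s)-c(s,X,\mu,q_s,\alpha_s)\bigr)ds\,\Big|\,\Fc_t\right],$$
with $Z$ defined by applying the martingale representation theorem, under $\mathbb P^{\mu,q,\alpha}$, to the c\`adl\`ag martingale obtained by adding the running integrals to the conditional expectation.

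The second step is to verify that the candidate $(Y,Z)$ belongs to the prescribed spaces. For $Y\in\mathbb S_{\exp}(\R)$ one uses boundedness of $K_{t,s}^{X,\mu,q}$ (from Assumption $(\mathbf{K})$), the growth of $U_A$, $u_A$, and $c$ in $\|X\|_{s,\infty}$ and in $|\alpha|$ given by $(\mathbf{U})$ and $(\mathbf{C}^{p,\ell,m,\underline m})$, together with $(i)$ the moment bound \eqref{eq:lambda0} on the initial law $\lambda_0$, $(ii)$ the exponential integrability of $\|X\|_{t,\infty}$ under $\mathbb P$ (which holds by Assumption $(\boldsymbol{\sigma})$ since $X$ is then a Brownian martingale with bounded volatility), $(iii)$ the exponential moments of $\int_0^T|\alpha_s|^{\ell+m}ds$ from \eqref{eq:admm}, and finally $(iv)$ H\"older's inequality to transfer these bounds from $\mathbb P$ to $\mathbb P^{\mu,q,\alpha}$, using the $(1+\eps)$--integrability of the Radon--Nikodym density also contained in \eqref{eq:admm}. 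The resulting estimates produce exponential moments of every order for $\sup_{t\in[0,T]}|Y_t|$ under both measures, provided the admissibility class of contracts imposes enough integrability on $\xi$ and $\chi$. For $Z\in\mathbb H^p(\R)$ under $\mathbb P$ for every $p\geq1$, one applies an It\^o--energy estimate under $\mathbb P^{\mu,q,\alpha}$ (multiplying by $Y_t$, taking expectations, and using Burkholder--Davis--Gundy and Young), controlling $\|Z\sigma(X)\|_{\mathbb H^p(\mathbb P^{\mu,q,\alpha})}$ by powers of $\sup|Y|$ and of the data, and then returning to $\mathbb P$ by H\"older with density $d\mathbb P^{\mu,q,\alpha}/d\mathbb P$. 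Uniqueness follows by linearisation: the difference of two solutions solves the same equation with zero terminal data and forcing, and Gronwall applied to the representation above gives $Y^1=Y^2$ and, by the representation theorem, $Z^1=Z^2$.

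Finally, for the identification with $v_0^A$, evaluate the representation at $t=0$. Since $\Fc_0$ is the $\mathbb P$--completion of $\sigma(\psi)$ and the Dol\'eans--Dade exponential in \eqref{def:probastar} equals $1$ at $t=0$, the two measures $\mathbb P$ and $\mathbb P^{\mu,q,\alpha}$ agree on $\Fc_0$, so the tower property gives
$$\mathbb E\!\left[Y_0^{\mu,q,\alpha}(\chi,\xi)\right]=\mathbb E^{\mathbb P^{\mu,q,\alpha}}\!\left[Y_0^{\mu,q,\alpha}(\chi,\xi)\right]=\mathbb E^{\mathbb P^{\mu,q,\alpha}}\!\left[K_{0,T}^{X,\mu,q}U_A(\xi)+\int_0^T K_{0,s}^{X,\mu,q}\bigl(u_A(s,X,\mu,q_s,\chi_s)-c(s,X,\mu,q_s,\alpha_s)\bigr)ds\right],$$
which is exactly $v_0^A(\chi,\xi,\mu,q,\alpha)$. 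The main obstacle in this proof is the integrability bookkeeping in the second step: the $\mathbb S_{\exp}$--control on $Y$ and the $\mathbb H^p$--control on $Z$ for all $p$ are non--standard, and require carefully combining the exponential moments of the Radon--Nikodym density, of $\|X\|_{T,\infty}$, of $\alpha$, and of $\lambda_0$ via repeated H\"older inequalities with well--chosen conjugate exponents.
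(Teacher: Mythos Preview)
Your strategy coincides with the paper's: construct $Y_t$ via the explicit Feynman--Kac conditional expectation, produce $Z$ by martingale representation, and check the spaces by H\"older. The one genuine difference lies in \emph{where} the martingale representation theorem is applied. You invoke it under $\P^{\mu,q,\alpha}$, with respect to $W^{\mu,q,\alpha}$, for the $(\P^{\mu,q,\alpha},\mathbb{F})$--martingale obtained from the discounted value process. The paper instead multiplies by the density process $E_{0,t}:=d\P^{\mu,q,\alpha}|_{\Fc_t}/d\P|_{\Fc_t}$ to manufacture a $(\P,\mathbb{F})$--martingale, represents \emph{that} under $\P$ with respect to the original $W$, and then undoes the multiplication by It\^o's formula to recover $Z$. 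In a footnote the authors justify this detour by citing Tsirelson's example and warning that ``the representation may fail under $\P^{\mu,q,\alpha}$''. That warning is somewhat overstated: Tsirelson shows that the natural filtration of $W^{\mu,q,\alpha}$ can be strictly smaller than $\mathbb{F}$, but the predictable representation property in the \emph{fixed} filtration $\mathbb{F}$ is preserved under equivalent changes of measure, so your shortcut is legitimate. It would still be worth one sentence saying why.

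The paper's detour does buy something concrete on the integrability side. By representing under $\P$, the integrand $\widetilde Z$ lands in $\mathbb{H}^p(\P)$ directly, and the final $Z$ is obtained from it by a pathwise transformation whose factors are already controlled under $\P$. In your route, the It\^o energy estimate yields $Z\in\mathbb{H}^p(\P^{\mu,q,\alpha})$ first, and the transfer back to $\P$---your ``returning to $\P$ by H\"older with density $d\P^{\mu,q,\alpha}/d\P$''---actually requires moments of the \emph{inverse} density $d\P/d\P^{\mu,q,\alpha}$, i.e.\ negative moments of the Dol\'eans--Dade exponential, which are not part of the admissibility hypothesis \eqref{eq:admm} (only $L^{1+\eps}$ of the density itself is assumed there). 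This is a small but real gap: either supply a separate argument for the inverse moments, or follow the paper and carry out the representation on the $\P$--side from the start.
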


Now, we turn our attention to the equilibrium between the Agents, which consists in solving the problem \eqref{eq:valueagent} and finding an associated fixed point. For given $(\chi,\xi)\in \mathfrak C$, we call this problem \textbf{(MFG)}$(\chi,\xi)$ and recall our readers the rigorous definition of a solution, taken from \cite{carmona2015probabilistic}.
\begin{Definition}[Solution of \textbf{$($MFG$)(\chi,\xi)$}]\label{def:solMFG} A triplet $(\mu,q,\alpha)\in \mathcal P(\mathcal C)\times \mathfrak P(\R)\times \mathcal A$ is a solution to the system $({{\rm \bf MFG}})(\chi,\xi)$ if $V_0^{A}(\chi,\xi,\mu,q)=v_0^A(\chi,\xi,\mu,q,\alpha)$, $\mathbb P^{\mu,q, \alpha}\circ (X)^{-1}=\mu $ and $\mathbb P^{\mu,q,\alpha}\circ (\alpha_t)^{-1}=q_t $ for Lebesgue almost every $t\in[0,T]$.
\end{Definition}
Recall from Lemma \ref{lemma:soledsrloisfixes} that for any triplet $(\mu,q,\alpha)\in \mathcal P(\mathcal C)\times \mathfrak P(\mathbb R)\times \mathcal A$, there exists a unique solution to BSDE \eqref{edsr:agent:loisfixes}. The notion of admissibility for an effort given by \eqref{eq:admm} implies implicitly that we have to restrict a bit more the notion of a solution to $\mathbf{(MFG)}(\chi,\xi)$, mainly in terms of the required integrability. Let $r>1$ be fixed throughout the rest of the paper. We denote by ${\rm MF}^r(\chi, \xi)$ the set of solutions $(\mu,q,\alpha)$ to $({{\rm \bf MFG}})(\chi,\xi)$ such that the second component $Z^{\mu,q,\alpha}$ of the solution to BSDE \eqref{edsr:agent:loisfixes} (associated to the solution of \textbf{$($MFG$)(\chi,\xi)$} by Lemma \ref{lemma:soledsrloisfixes}) is in the space $\mathbb H^{\lambda r}_{\rm exp}(\mathbb R),$ with $\lambda:= (\ell+m)/(\underline m+1-\ell)$ (recall that by definition, $\underline m+1> \ell$).

\vspace{0.3em}
\noindent In general, the system \textbf{(MFG)}$(\chi,\xi)$ can admit several solutions, \textit{i.e.} the set ${\rm MF}^r(\chi, \xi)$ is not necessarily reduced to a unique triplet $(\mu,q,\alpha)$. \textcolor{black}{To simplify our study, we will assume that the choice of the equilibrium played by the Agents is handed over to the Principal, so that she chooses both contracts and mean--field equilibria. One interpretation is that the Agents are not sophisticated enough to select a mean--field equilibrium, but are however able to check if the proposed effort is indeed a component of a mean--field equilibrium. }
\begin{Remark}
We would also like to emphasise that the Agents could also be allowed to select one equilibrium in {\rm$\textbf{(MFG)}(\chi,\xi)$} themselves. This framework is also covered by our study, provided that one fixes  that equilibrium in the rest of the paper. Nevertheless, in view of the literature on Principal--Agent problems, it is more relevant to assume that the choice is delegated to the Principal, since classically the Principal offers to the Agent both a contract and a recommended level of effort $($see {\rm \cite{laffont2009theory}} for instance$)$. 
\end{Remark}

\subsection{The Principal problem}\label{section:PP:chaos}

Before defining the problem of the Principal, we need to define the set of admissible contracts. The idea is to consider only contracts such that the Principal is able to compute the reaction of the Agents, that is to say the ones for which there is at least one mean field equilibrium. Although there could arguably be a discussion on the question of whether considering that the Agents are looking for a mean field equilibrium or not is the most pertinent one, we believe that once this choice has been made, our assumption makes sense from the practical point of view. Indeed, the Principal needs to be able to anticipate, one way or another, how the Agents are going to react to the contract that he may offer, and will therefore not offer contract for which Agents cannot agree on an equilibrium. Granted, one could also resort to approximate equilibria or other related notions, but this paper being the first one in the literature treating moral hazard problems with mean--field interactions, we have chosen to work in a setting which remains reasonable and tractable at the same time. This being said, the contract also has to take into account the fact that a representative Agent has a reservation utility $R_0$ and will never accept a contract which does not guarantee them at least that amount of utility. 
\vspace{0.5em}

Finally, we need to add some integrability assumptions, which finally leads us to the set of admissible contracts $\Xi$ defined by the following.

\begin{Definition} The set $\Xi$ is composed by pairs $(\chi,\xi)\in\mathfrak C$, such that 
\begin{itemize}[leftmargin=*]
 \item[$(i)$] ${\rm MF}^r(\chi,\xi)\neq \emptyset$.
 \item[$(ii)$] For any $(\mu^{\star},q^{\star},\alpha^{\star})\in {\rm MF}^r(\chi,\xi) $ we have $V^{A}_0(\chi,\xi, \mu^{\star},q^{\star})\geq R_0$.
 \item[$(iii)$] For any $p\geq 0$
\begin{equation}\label{inegalite:chixi} \mathbb E\left[\exp\left(p\left(|U_A(\xi)|+\int_0^T\tilde u_A(\chi_s)ds\right)\right)\right]<\infty.\end{equation}
\end{itemize}
\end{Definition}

The Principal's problem is then to solve the following optimisation
\begin{equation}\label{PrincipalProblem2}
U_0^P:= \sup_{(\chi,\xi)\in \Xi}\, \sup_{(\mu,q,\alpha)\in {\rm MF}^r(\chi,\xi)}\, \mathbb E^{\P^{\mu,q,\alpha}}\left[U_P\bigg(X_T-\xi-\int_0^T \chi_s ds \bigg)\right],
\end{equation}
where $U_P$ is an increasing map from $\mathbb R$ into $\mathbb R$. As far as we know, such a problem has never been considered so far in the literature. It basically boils down to finding a Stackelberg equilibrium between the Principal and infinitely many Agents in mean--field equilibrium. The main contribution of this paper is to show that we can actually reduce it to the study of a system of controlled McKean--Vlasov SDEs. We will describe in Section \ref{section:mckean} some approaches to try and tackle the latter problem.
\section{Solving the mean--field game: yet another BSDE story}\label{section:solvingMF}
\subsection{Optimal effort}

We now present a result ensuring the existence of a maximiser for $g$ with respect to the effort of the Agent, as well as associated growth estimates.


\begin{Lemma}\label{lemma:maxg}
Let Assumptions $(\mathbf{B}^{p,\ell,\eta})$, $(\boldsymbol{\sigma})$, $\mathbf{(K)}$ and $(\mathbf{C}^{p,\ell,m,\underline m})$ hold true for some $(p,\ell,m,\underline m,\eta)\in[1,+\infty)\times(0,+\infty)\times[\ell,+\infty)\times(\ell-1,+\infty)\times (1,+\infty)$. Then, for any $(s,x,\mu,q,y,z,\chi)\in [0,T]\times \mathcal C\times \mathcal P(\mathcal C)\times \mathfrak P(\mathbb R)\times \mathbb R\times\mathbb R\times\mathbb R_+$ there exists $a^\star(s,x,z,\mu,q)\in \mathbb R$ such that
\begin{equation}\label{eq:g}
a^\star(s,x,z,\mu,q)\in\underset{a\in A}{\rm{argmax }}\,g(s,x, y, z, \mu,q_s,a,\chi),\end{equation}
satisfying for some positive constant $C$
\begin{align*}
&|a^\star(t,x,z,\mu,q)|\\
&\leq C\bigg( 1+ |z|^{\frac{1}{\underline m+1-\ell}}\bigg( 1+|b^1(\No{x}_{t,\infty})|^{\frac{1}{\underline m+1-\ell}} +\bigg(\int_{\mathcal C} \|w\|_{t,\infty}^p \mu(dw)\bigg)^{\frac{1}{p(\underline m+1-\ell)}}+\bigg(\int_{\mathbb R} |w|^p q(dw)\bigg)^{\frac{1}{p(\underline m+1-\ell)}}\bigg) \bigg). 
\end{align*}
Furthermore, we have
\begin{align*} 
&|g(s,x,y,z,\mu,q_s,a^\star(s,x,z,\mu,q),\chi)|\\
&\leq C\bigg(1+\|x\|_{s,\infty}+|y|+|z|^{\frac{(\ell+m)\vee(\underline m+1)}{\underline m+1-\ell}}+\bigg(\int_{\mathcal C} \|w\|_{t,\infty}^p \mu(dw)\bigg)^{\frac1p}+\bigg(\int_{\mathbb R} |w|^p q(dw)\bigg)^{\frac1p}+\tilde u_A(\chi) \bigg)\\
&\hspace{0.9em}+C|z|\bigg(b^0(\No{x}_{t,\infty})+ \bigg(\int_{\mathcal C} \|w\|_{t,\infty}^p \mu(dw)\bigg)^{\frac1p}+\bigg(\int_{\mathbb R} |w|^p q(dw)\bigg)^{\frac1p}\bigg)\\
&\hspace{0.9em}+C|z|^{\frac{\underline m+1}{\underline m+1-\ell}}\bigg(|b^1(\No{x}_{t,\infty})|^{\frac{1}{\underline m+1-\ell}} +\bigg(\int_{\mathcal C} \|w\|_{t,\infty}^p \mu(dw)\bigg)^{\frac{1}{p(\underline m+1-\ell)}}+\bigg(\int_{\mathbb R} |w|^p q(dz)\bigg)^{\frac{1}{p(\underline m+1-\ell)}} \bigg).
\end{align*}
\end{Lemma}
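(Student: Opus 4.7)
The plan is to exploit that $a\longmapsto g(s,x,y,z,\mu,q_s,a,\chi)$ depends on $a$ only through $zb(\cdot,a)-c(\cdot,a)$, since the remaining terms $u_A$ and $ky$ are $a$--independent. Existence of a maximiser $a^\star\in\R$ then follows from Weierstrass' theorem: the map is continuous in $a$ by the $\mathcal C^1$--in--$a$ assumptions $(\mathbf{B}^{p,\ell,\eta})(ii)$ and $(\mathbf{C}^{p,\ell,m,\underline m})$, and coercive because $|zb(\cdot,a)|$ grows at most like $|a|^\ell$ in $a$ while $\overline{\lim}_{|a|\to\infty}c(\cdot,a)/|a|^\ell=+\infty$ forces $c$ to dominate eventually, so $zb-c\longrightarrow-\infty$ as $|a|\longrightarrow\infty$.

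At such an interior maximiser, the first--order condition reads $z\,\partial_a b(\cdot,a^\star)=\partial_a c(\cdot,a^\star)$. Combining the lower bound $|\partial_a c(\cdot,a^\star)|\geq C|a^\star|^{\underline m}$ with the upper bound on $|\partial_a b|$ from $(\mathbf{B}^{p,\ell,\eta})(iii)$ yields
\begin{equation*}
|a^\star|^{\underline m}\leq C|z|\bigg(1+b^1(\|x\|_{t,\infty})+\bigg(\int_{\mathcal C}\|w\|_{t,\infty}^p\mu(dw)\bigg)^{1/p}+\bigg(\int_{\R}|w|^p q(dw)\bigg)^{1/p}+|a^\star|^{\ell-1}\bigg).
\end{equation*}
Calling $N(t,x,\mu,q)$ the bracketed quantity without $|a^\star|^{\ell-1}$, I would dichotomise (after reducing to $|a^\star|\geq 1$, where the trivial bound otherwise suffices) according to whether $|a^\star|^{\ell-1}\leq N$ or not. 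In the first regime one gets $|a^\star|^{\underline m}\leq 2C|z|N$; in the second $|a^\star|^{\underline m+1-\ell}\leq 2C|z|$. Since $\underline m+1-\ell>0$ by assumption, both inequalities can be inverted in $|a^\star|$, and combining them gives the stated estimate, the exponent $1/(\underline m+1-\ell)$ arising as the worse of the two scalings.

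For the bound on $|g(\cdot,a^\star)|$ I would treat each of the four summands of $g$ via the triangle inequality. Assumption $(\mathbf U)$ controls $|u_A|$ by $\|x\|_{s,\infty}$, the moment terms and $\tilde u_A(\chi)$, and $(\mathbf K)$ makes $|ky|$ of order $|y|$. The drift piece $|z\,b(\cdot,a^\star)|$ is bounded by $C|z|\bigl(b^0(\|x\|_{t,\infty})+1+\text{moment terms}+|a^\star|^\ell\bigr)$, and substituting the bound on $|a^\star|^\ell$ produces the term of order $|z|^{1+\ell/(\underline m+1-\ell)}=|z|^{(\underline m+1)/(\underline m+1-\ell)}$. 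The cost piece $c(\cdot,a^\star)$ has growth $|a^\star|^{\ell+m}$ by $(\mathbf{C}^{p,\ell,m,\underline m})$, producing the term $|z|^{(\ell+m)/(\underline m+1-\ell)}$. The announced bound then follows by keeping the worse of these two exponents on $|z|$, which explains the appearance of $((\ell+m)\vee(\underline m+1))/(\underline m+1-\ell)$.

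The only genuine obstacle is the book--keeping in the dichotomy above, and the key structural fact it rests on: the condition $\underline m+1>\ell$ from $(\mathbf{C}^{p,\ell,m,\underline m})$ is precisely what enables the isolation of $|a^\star|$ in the first--order inequality. Without it, the $|a^\star|^{\ell-1}$ term on the right--hand side could not be absorbed into $|a^\star|^{\underline m}$, and no polynomial bound of the claimed form would be available; once this is granted, everything else is routine substitution.
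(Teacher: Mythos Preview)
Your proposal is correct and follows essentially the same route as the paper's proof: continuity plus coercivity for existence, the first--order condition $z\,\partial_a b=\partial_a c$ combined with the growth bounds on $\partial_a b$ and $\partial_a c$ to obtain $|a^\star|^{\underline m}\leq C|z|(N+|a^\star|^{\ell-1})$, and then the structural hypothesis $\underline m+1>\ell$ to extract the polynomial bound on $|a^\star|$. The paper is in fact terser than you are---it dispatches the passage from the last inequality to the final estimate in one line (``if $a^\star$ is unbounded, there exists $D>0$ such that\dots''), and declares the growth estimate on $g$ at $a^\star$ ``immediate from our assumptions''---whereas you spell out the dichotomy and the term--by--term substitution; but the argument is the same.
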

In the following, we set 
$$ g^\star(s,x,y,z,\mu,q_s,\chi):= \sup_{a\in A} \, g(s,x, y, z, \mu,q_s,a\chi). $$

\textcolor{black}{For the sake of simplicity, we will from now on reduce our study to the case where $g$ only admits one maximiser, which actually holds in all our examples studied in Section \ref{section:ex}, an which basically corresponds to assuming sufficient coercivity for the Hamiltonian. Our approach extends to the a more general setting where the Principal can choose among these optimal efforts, albeit with more complicated notations.
\begin{Assumption}\label{assumption:unique} For any $(t,x,y,z,\mu,q,\chi)\in [0,T]\times\mathcal C\times\mathbb R\times\mathbb R\times\mathcal P(\mathcal C)\times\mathfrak P(\mathbb R)\times\mathbb R_+$, the map 
$$a\in A\longmapsto g(t,x, y, z, \mu,q_s,a,\chi),$$ admits a unique measurable maximiser $a^\star(t,x,y,\mu,q,\chi)$. \end{Assumption} }

%
%

We next consider the following system, which is intimately related to mean--field FBSDE as introduced by \cite{carmona2013mean}.
\begin{equation}\label{edsr:agent:max}
\begin{cases} \displaystyle Y_t^{\star}(\chi,\xi)=U_A(\xi)+\int_t^T  g^\star(s,X, Y_s^{\star}(\chi,\xi), Z_s^{\star}(\chi,\xi), \mu,q_s,\chi_s)  ds-\int_t^TZ_s^{\star}(\chi,\xi) \sigma_s(X) dW_s,\\[0.8em]
\displaystyle \mathbb P^{\mu,q,a^\star(\cdot,X,Z^\star_\cdot(\chi,\xi),\mu,q_\cdot)}\circ (X)^{-1}=\mu,\\[0.8em]
 \displaystyle \mathbb P^{\mu,q,a^\star(\cdot,X,Z^\star_\cdot(\chi,\xi),\mu,q_\cdot)}\circ (a^\star(s, X, Z_s^{\star}(\chi,\xi), \mu,q_s))^{-1}=q_s, \text{ for $a.e.$ $s\in[0,T]$},
\end{cases}
\end{equation}
where the process $a^\star(\cdot,X,Z^\star_\cdot(\chi,\xi),\mu,q_\cdot)$ from Assumption \ref{assumption:unique} satisfies
\begin{equation}\label{maximizer:g:eq}g^\star(\cdot,X, Y^{\star}(\chi,\xi), Z^{\star}(\chi,\xi), \mu,q,\chi)=g(\cdot,X, Y^{\star}(\chi,\xi), Z^{\star}(\chi,\xi),a^\star(\cdot,X,Z^\star_\cdot(\chi,\xi),\mu,q_\cdot), \mu,q,\chi).\end{equation}
\noindent Notice that we intrinsically invoke measurable selection arguments (see for instance \cite{karoui2013capacities2} for detailed explanations) to ensure the measurability of $a^\star(\cdot,X,Z^\star_\cdot(\chi,\xi),\mu,q_\cdot)$. This system will provide us the required probabilistic representation of the solution to the mean--field game of the Agent. Before presenting and proving this link, we start by defining a solution to \eqref{edsr:agent:max}. Once more, the constants $\ell$, $m$ and $\underline m$ are the ones appearing in Assumption $(\mathbf{C}^{p,\ell,m,\underline m})$.
\begin{Definition}
\label{def:solution:edsrmuq}
A solution to the mean--field BSDE \eqref{edsr:agent:max} is a quadruplet $(Y^\star,Z^\star,\mu,q)\in\mathbb S_{\rm exp}(\mathbb R) \times \mathbb H_{\rm exp}^{\lambda r}(\mathbb R)\times \mathcal P(\mathcal C)\times \mathfrak P(\mathbb R) $, with $\lambda= (\ell+m)/(\underline m+1-\ell)$, satisfying the system \eqref{edsr:agent:max} for any $t\in[0,T]$, $\P-a.s.$ 
\end{Definition}

Inspired by \cite{elie2016contracting}, we aim at providing an equivalence result between a solution to \textbf{(MFG)}$(\chi,\xi)$ in the set ${\rm MF}^r(\chi,\xi)$ and a solution to BSDE \eqref{edsr:agent:max} in the sense of Definition \ref{def:solution:edsrmuq}. We have the following theorem which provides such a result, together with a characterisation of an optimal effort for the Agent in terms of maximisers of $g$. Notice that such a link is classical and expected, and was already obtained by Carmona and Lacker \cite{carmona2015probabilistic}, but in a case where the Hamiltonian is Lipschitz and thus does not cover our setting.

\begin{Theorem}\label{thm:agent}
Let Assumptions $(\mathbf{B}^{p,\ell,\eta})$, $(\boldsymbol{\sigma})$, $\mathbf{(K)}$ and $(\mathbf{C}^{p,\ell,m,\underline m})$ hold true for some $(p,\ell,m,\underline m,\eta)\in[1,+\infty)\times(0,+\infty)\times[\ell,+\infty)\times(\ell-1,+\infty)\times[0,+\infty)\times (1,+\infty)$. Fix $(\chi,\xi)\in \mathfrak C$.
\begin{itemize}[leftmargin=*]
\item Assume that $(\chi,\xi)\in \Xi$, \textit{i.e.}, the system $\mathbf{(MFG)}(\chi,\xi)$ admits a solution in ${\rm MF}^r(\chi,\xi)$ denoted by $(\mu,q,\alpha^{\star})$. Then there exists a solution $(Y^\star,Z^\star,\mu,q)$ to BSDE \eqref{edsr:agent:max} such that $\alpha^{\star}_t=a^\star(t,X, Z^\star,\mu,q,\chi)$ almost surely. In this case, we have the following decomposition for $U_A(\xi)$
\begin{equation}\label{decomposition:uaxi}U_A(\xi)=Y_0^{\star}-\int_0^T  g^\star(t,X, Y_t^{\star}, Z_t^{\star}, \mu,q_t,\chi_t)  dt+\int_0^TZ_t^{\star} \sigma_t(X) dW_t.\end{equation}
\item Conversely, if there exists a solution to BSDE \eqref{edsr:agent:max} denoted by $(Y^\star(\chi,\xi),Z^\star(\chi,\xi),\mu,q) \in\mathbb S_{\rm exp}(\mathbb R) \times \mathbb H_{\rm exp}^{\lambda r}(\mathbb R)\times \mathcal P(\mathcal C)\times \mathfrak P(\mathbb R)$, then the system $\mathbf{(MFG)}(\chi,\xi)$ admits as a solution in ${\rm MF}^r(\chi,\xi)$, given by the triplet $(\mu,q,a^\star(\cdot,X,Z^\star_\cdot(\chi,\xi),\mu,q_\cdot))$, where $a^\star$ is characterized by \eqref{maximizer:g:eq}.
\end{itemize}
\end{Theorem}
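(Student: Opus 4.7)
The plan is to exploit Lemma \ref{lemma:soledsrloisfixes}, which, for any triplet $(\mu,q,\alpha)$, identifies the Agent's value $v_0^A(\chi,\xi,\mu,q,\alpha)$ with $\E[Y_0^{\mu,q,\alpha}(\chi,\xi)]$ where $Y^{\mu,q,\alpha}$ solves a classical quadratic BSDE with driver $g$. The key observation is that once $(\mu,q)$ is frozen, the Agent faces a \emph{standard} non--Markovian control problem, so the supremum over $\alpha$ corresponds exactly to replacing $g$ by its pointwise supremum $g^\star$ in the BSDE. Hence the forward direction boils down to a verification/comparison principle for BSDEs, and the converse direction to a synthesis argument together with a measurable selection.

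\textbf{Forward direction.} Assume $(\mu,q,\alpha^\star)\in{\rm MF}^r(\chi,\xi)$. Freeze $(\mu,q)$ and consider the quadratic BSDE with driver $g^\star(\cdot,X,y,z,\mu,q_\cdot,\chi_\cdot)$. Lemma \ref{lemma:maxg} provides the growth bounds on $g^\star$; combined with the exponential integrability of $\xi$ and of $\tilde u_A(\chi)$ built into $\Xi$, and the exponential moments of $\|X\|_{T,\infty}$ under $\P$ (because $\sigma$ is bounded), standard results on quadratic BSDEs (e.g. Briand--Hu) yield a unique solution $(Y^\star,Z^\star)\in\mathbb S_{\exp}(\R)\times\mathbb H^{\lambda r}_{\rm exp}(\R)$. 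By Assumption \ref{assumption:unique} there is a unique measurable maximiser $a^\star(\cdot,X,Z^\star,\mu,q)$. A classical verification argument—comparing the driver $g$ evaluated at any admissible $\alpha$ with $g^\star$, then applying the comparison theorem for BSDEs—gives $Y^{\mu,q,\alpha}_0\leq Y^\star_0$, with equality iff $\alpha$ equals the maximiser. Taking expectations and using Lemma \ref{lemma:soledsrloisfixes},
\[
V_0^A(\chi,\xi,\mu,q)=\E[Y^\star_0]=\E[Y^{\mu,q,a^\star(\cdot,X,Z^\star,\mu,q)}_0].
\]
Since $\alpha^\star$ is also optimal by assumption and the maximiser is unique, one obtains $\alpha^\star_t=a^\star(t,X,Z^\star_t,\mu,q)$ for $dt\otimes d\P$--a.e. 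The two fixed--point conditions $\mathbb P^{\mu,q,\alpha^\star}\circ X^{-1}=\mu$ and $\mathbb P^{\mu,q,\alpha^\star}\circ(\alpha^\star_t)^{-1}=q_t$ from Definition \ref{def:solMFG} then translate verbatim into the second and third equations of \eqref{edsr:agent:max}. The decomposition \eqref{decomposition:uaxi} is just the BSDE written at $t=0$.

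\textbf{Converse direction.} Given $(Y^\star,Z^\star,\mu,q)\in \mathbb S_{\rm exp}(\R)\times\mathbb H^{\lambda r}_{\rm exp}(\R)\times\mathcal P(\mathcal C)\times\mathfrak P(\R)$ solving \eqref{edsr:agent:max}, set $\alpha^\star:=a^\star(\cdot,X,Z^\star,\mu,q)$, the measurability of which is guaranteed by Assumption \ref{assumption:unique} (or a measurable selection theorem if one relaxes it). The growth estimate in Lemma \ref{lemma:maxg} bounds $|\alpha^\star|$ by a constant multiple of $1+|Z^\star|^{1/(\underline m+1-\ell)}$ times factors having all exponential moments (using Assumption $(\mathbf B^{p,\ell,\eta})$ on $b^1$ and the exponential moments of $\lambda_0$ via \eqref{eq:lambda0}); the choice $\lambda=(\ell+m)/(\underline m+1-\ell)$ is precisely tailored so that $Z^\star\in\mathbb H^{\lambda r}_{\rm exp}(\R)$ implies $\E[\exp(h\int_0^T|\alpha^\star_t|^{\ell+m}dt)]<\infty$ for all $h>0$, i.e. the integrability requirement of \eqref{eq:admm}. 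The Novikov--type condition for the exponential martingale defining $\P^{\mu,q,\alpha^\star}$ follows from the exponential moment assumption in $(\mathbf B^{p,\ell,\eta})$ on $b^0$ combined with the previous bound, which yields $\alpha^\star\in\mathcal A$. The two marginal laws identities in \eqref{edsr:agent:max} now show $(\mu,q,\alpha^\star)$ is a fixed point, and the verification inequality of the forward step (valid for any admissible $\alpha$) implies $V_0^A(\chi,\xi,\mu,q)=v_0^A(\chi,\xi,\mu,q,\alpha^\star)$; hence $(\mu,q,\alpha^\star)\in{\rm MF}^r(\chi,\xi)$.

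\textbf{Main obstacle.} The delicate point is not the verification argument itself but controlling the integrability of $\alpha^\star$ and of the driver $g^\star$ evaluated at $(Y^\star,Z^\star,\mu,q)$ in the super--quadratic regime allowed by Assumption $(\mathbf C^{p,\ell,m,\underline m})$. Essentially all the calibration between the spaces $\mathbb S_{\rm exp}$, $\mathbb H^{\lambda r}_{\rm exp}$, and the exponent $\ell+m$ in \eqref{eq:admm} is there to ensure Girsanov's theorem applies and comparison remains valid. The exponents coming from Lemma \ref{lemma:maxg}, specifically $(\ell+m)\vee(\underline m+1)/(\underline m+1-\ell)$, must be carefully tracked so that Hölder's inequality closes the estimates; this is where the explicit constraint $\lambda=(\ell+m)/(\underline m+1-\ell)$ in the definition of ${\rm MF}^r(\chi,\xi)$ plays its full role.
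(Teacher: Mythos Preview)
Your converse direction is essentially the paper's Step~2: define $\alpha^\star:=a^\star(\cdot,X,Z^\star,\mu,q)$, use the growth bound of Lemma~\ref{lemma:maxg} together with $Z^\star\in\mathbb H^{\lambda r}_{\rm exp}(\R)$ to get the exponential moment in \eqref{eq:admm}, and invoke a Novikov/L\'epingle--M\'emin criterion for the Girsanov density. The paper carries out exactly these H\"older/Young manipulations.

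The forward direction, however, follows a genuinely different route from the paper and contains a gap. You propose to \emph{first} solve the BSDE with driver $g^\star$ by appealing to Briand--Hu, and \emph{then} identify $Y^\star$ with the value function via comparison. The paper does the opposite: it takes the dynamic value function $V^A_\tau(\chi,\xi,\mu,q):=\operatorname*{ess\,sup}_{\alpha}v^A_\tau(\chi,\xi,\mu,q,\alpha)$, uses the dynamic programming principle to show that the discounted process is a $\P^{\mu,q,\alpha}$--supermartingale for every $\alpha$ and a true martingale for $\alpha^\star$, and then applies martingale representation (as in Lemma~\ref{lemma:soledsrloisfixes}) to produce $Z^\star$. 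The supermartingale inequality for all $\alpha$ then forces $g(\cdot,\alpha^\star)\ge g(\cdot,\alpha)$, hence $\alpha^\star=a^\star$. The crucial advantage of this construction is that it never invokes an \emph{a~priori} existence theorem for the $g^\star$--BSDE. Your appeal to Briand--Hu is problematic for two reasons: first, Lemma~\ref{lemma:maxg} gives growth of order $|z|^{((\ell+m)\vee(\underline m+1))/(\underline m+1-\ell)}$ in $z$, which is in general super--quadratic (in the example of Section~\ref{section:ex} it is $|z|^{n/(n-1)}$), so the quadratic BSDE theory does not apply; second, even when it does, Briand--Hu yields $Z\in\bigcap_p\mathbb H^p$ but certainly not $Z\in\mathbb H^{\lambda r}_{\rm exp}$. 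In the paper this last integrability is not proved---it is \emph{inherited} from the hypothesis $(\mu,q,\alpha^\star)\in{\rm MF}^r(\chi,\xi)$, whose very definition stipulates that $Z^{\mu,q,\alpha^\star}\in\mathbb H^{\lambda r}_{\rm exp}$. Your route cannot access this because you build $(Y^\star,Z^\star)$ independently of $\alpha^\star$ and only link them afterwards. The fix is to abandon the existence--then--comparison scheme and instead argue via the martingale optimality principle as the paper does.
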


\subsection{A convenient characterisation of $\Xi$}

In order to provide a relevant Hamilton--Jacobi--Bellman equation to solve the Principal's problem \eqref{PrincipalProblem}, and following the general approach to contracting problems initiated by Cvitani\'c, Possama\"i and Touzi \cite{cvitanic2014moral,cvitanic2015dynamic}, we need to have a convenient probabilistic representation of the value function of the Agent, for any contract $(\chi,\xi)\in\Xi$. We will now show in this section that Theorem \ref{thm:agent} is tailor--made for that purpose.

\vspace{0.5em}
Let us start by introducing a convenient notation, and define the set $\mathcal X$ as the set of $\R_+-$valued $\F-$predictable processes $\chi$ such that
$$\mathbb E\left[\exp\left(h\int_0^T\tilde u_A(\chi_s)ds\right)\right]<+\infty,\text{ for any $h\geq 0$.}$$

\vspace{0.5em}
Our first step is to introduce an appropriate system of coupled and controlled McKean--Vlasov type SDEs, which basically amounts to look at the BSDE \eqref{edsr:agent:max} in a forward manner. For any $(Y_0,Z)\in\mathbb R\times  \mathbb H_{\rm exp}^{\lambda r}(\mathbb R)$, with once more $\lambda=(\ell+m)/(\underline m+1-\ell)$, and any $\chi\in\mathcal X$, we introduce for \textcolor{black}{$a^\star$ given by Assumption \ref{assumption:unique}.}
 \begin{equation}\label{systemeEDS}
\begin{cases}
\displaystyle X_t=\psi+\int_0^tb(s,X, \mu, q_s,  a^\star(s,X,Z_s,\mu,q_s))ds+\int_0^t\sigma_s(X) dW^{\mu,q,a^\star(\cdot,X,Z_\cdot,\mu,q_\cdot)}_s,\\[0.8em]
\displaystyle Y_t^{Y_0,Z}(\chi)=Y_0+ \int_0^t\left(b(s,X, \mu, q_s,  a^\star(s,X,Z_s,\mu,q_s))Z_s-g^\star(s,X, Y_s^{Y_0,Z}(\chi), Z_s, \mu,q_s,\chi_s) \right) ds\\
\displaystyle\hspace{5em}+ \int_0^tZ_s \sigma_s(X) dW^{\mu,q,a^\star(\cdot,X,Z_\cdot,\mu,q_\cdot)}_s,\\[0.8em]
\displaystyle \mu=\mathbb P^{\mu,q,a^\star(\cdot,X,Z_\cdot,\mu,q_\cdot)} \circ X^{-1},\\[0.8em]
\displaystyle q_t=\mathbb P^{\mu,q,a^\star(\cdot,X,Z_\cdot,\mu,q_\cdot)} \circ (a^\star(t,X,Z_t,\mu,q_t))^{-1},\ \text{for Lebesgue $a.e.$ $t\in[0,T]$}.
 \end{cases} 
\end{equation}
A solution of this system will be required to satisfy the following properties.
\begin{Definition}\label{definition:solsystemeEDS} A solution of the system \eqref{systemeEDS} is a quadruplet $(X,Y^{Y_0,Z}(\chi),\mu,q)$ satisfying \eqref{systemeEDS} for any $t\in[0,T]$, $\P-a.s.$, such that in addition $Y^{Y_0,Z}(\chi)\in\mathbb S_{\rm exp}(\mathbb R)$. We call $\mathcal Z(\chi)$ the subset of $Z\in \mathbb H_{\rm exp}^{\lambda r}(\mathbb R)$, with $\lambda=(\ell+m)/(\underline m+1-\ell)$, such that there is a solution to \eqref{systemeEDS} for this given $Z$.
\end{Definition}

We now define $\widehat {\Xi}$ as follows
$$\widehat{\Xi}:=\left\{\Big(\chi,U_A^{(-1)}\big(Y_T^{Y_0,Z}(\chi)\big)\Big),\ \chi\in\mathcal X,\ Y_0\geq R_0,\ Z\in\mathcal Z(\chi)\right\}.$$

We have the following characterisation of the set $\Xi$ as an immediate consequence of Theorem \ref{thm:agent}.
\begin{Corollary}\label{corollary:xi}
Let Assumptions $(\mathbf{B}^{p,\ell,\eta})$, $(\boldsymbol{\sigma})$, $\mathbf{(K)}$ and $(\mathbf{C}^{p,\ell,m,\underline m})$ hold true for some $(p,\ell,m,\underline m,\eta)\in[1,+\infty)\times(0,+\infty)\times[\ell,+\infty)\times(\ell-1,+\infty)\times (1,+\infty)$. Then $\Xi=\widehat{\Xi}.$ 
\end{Corollary}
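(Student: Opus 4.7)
The corollary follows almost directly from Theorem \ref{thm:agent}, once one recognises that the McKean--Vlasov system \eqref{systemeEDS} is nothing but BSDE \eqref{edsr:agent:max} read forward, parametrised by the initial datum $Y_0$ of the $Y$--component and the control process $Z$. The plan is to verify the two inclusions separately; each boils down to toggling between the forward SDE and backward BSDE formulations of the $Y$--equation, followed by bookkeeping of integrability classes.

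For $\widehat\Xi\subseteq\Xi$, start from $(\chi,\xi)=\bigl(\chi,U_A^{(-1)}(Y_T^{Y_0,Z}(\chi))\bigr)$ with $\chi\in\mathcal X$, $Y_0\geq R_0$ and $Z\in\mathcal Z(\chi)$. By Definition \ref{definition:solsystemeEDS} there is a solution $(X,Y^{Y_0,Z}(\chi),\mu,q)$ to \eqref{systemeEDS}; reading its $Y$--dynamics backward with terminal value $Y_T^{Y_0,Z}(\chi)=U_A(\xi)$ exhibits $(Y^{Y_0,Z}(\chi),Z,\mu,q)$ as a solution to \eqref{edsr:agent:max} in the sense of Definition \ref{def:solution:edsrmuq}. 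The second part of Theorem \ref{thm:agent} then delivers $(\mu,q,a^\star(\cdot,X,Z_\cdot,\mu,q_\cdot))\in{\rm MF}^r(\chi,\xi)$, yielding condition $(i)$ of $\Xi$. Since $Y_0$ is $\mathcal F_0$--measurable hence deterministic, $V_0^A(\chi,\xi,\mu,q)=\mathbb E[Y_0^{Y_0,Z}(\chi)]=Y_0\geq R_0$, giving $(ii)$. Finally, $(iii)$ follows by combining $Y^{Y_0,Z}(\chi)\in\mathbb S_{\rm exp}(\R)$ (so $|U_A(\xi)|$ has exponential moments of every order) with the definition of $\mathcal X$, via H\"older's inequality.

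For $\Xi\subseteq\widehat\Xi$, pick $(\chi,\xi)\in\Xi$. Condition $(i)$ provides some $(\mu,q,\alpha^\star)\in{\rm MF}^r(\chi,\xi)$, and the first part of Theorem \ref{thm:agent} furnishes a corresponding solution $(Y^\star,Z^\star,\mu,q)$ to \eqref{edsr:agent:max} with the decomposition \eqref{decomposition:uaxi}. Set $Y_0:=Y_0^\star$ and $Z:=Z^\star\in\mathbb H_{\rm exp}^{\lambda r}(\R)$; condition $(ii)$ then forces $Y_0\geq R_0$. Reading \eqref{edsr:agent:max} forward identifies $(X,Y^\star,\mu,q)$ as a solution of \eqref{systemeEDS}, so that $Z\in\mathcal Z(\chi)$; matching terminal values gives $\xi=U_A^{(-1)}(Y_T^{Y_0,Z}(\chi))$; and $\chi\in\mathcal X$ is an immediate consequence of $(iii)$. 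Hence $(\chi,\xi)\in\widehat\Xi$.

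The only delicate point is the equivalence between the forward and backward formulations of the $Y$--equation: strictly speaking, one must check that the forward SDE for $Y$ admits a unique solution given $(Y_0,Z,\mu,q)$, so that this solution necessarily coincides with the $Y^\star$ produced by the BSDE. This is however harmless in view of \eqref{def:g}: $g$ and hence $g^\star$ are affine in $y$ with bounded coefficient $-k$, so the forward $Y$--equation is globally Lipschitz in its state and classical well--posedness theory applies. Everything else amounts to routine bookkeeping.
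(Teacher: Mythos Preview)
Your overall strategy matches the paper's: both inclusions are obtained by toggling between the forward system \eqref{systemeEDS} and the backward BSDE \eqref{edsr:agent:max}, invoking the two halves of Theorem \ref{thm:agent}. The bookkeeping for conditions $(i)$ and $(iii)$ is fine.

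There is, however, a genuine slip in your treatment of the initial value $Y_0$. You write ``$Y_0$ is $\mathcal F_0$--measurable hence deterministic''. In the present setup this implication is \emph{false}: $\mathcal F_0$ is the $\P$--augmentation of $\sigma(\psi)$, and $\psi$ has law $\lambda_0$, which need not be a Dirac mass. The paper makes this point explicitly in the proof of Lemma \ref{lemma:soledsrloisfixes} (``since $\mathcal F_0$ is not trivial, the Blumenthal $0$--$1$ law does not hold here, we cannot claim that $Y_0^{\mu,q,\alpha}(\chi,\xi)$ is a constant''). In the direction $\widehat\Xi\subseteq\Xi$ this is only a cosmetic issue, since $Y_0$ is a real number by the very definition of $\widehat\Xi$; your conclusion is correct even if the stated reason is not.

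In the direction $\Xi\subseteq\widehat\Xi$, however, the slip bites. You set $Y_0:=Y_0^\star$, but $Y_0^\star$ is in general a $\sigma(\psi)$--measurable random variable, not an element of $\R$, so it does not qualify as an admissible parameter in the definition of $\widehat\Xi$. Moreover, condition $(ii)$ of $\Xi$ only gives $V_0^A(\chi,\xi,\mu,q)=\E[Y_0^\star]\geq R_0$, not $Y_0^\star\geq R_0$ pointwise. The paper's own proof handles this by taking instead $Y_0:=\E[Y_0^\star]\in\R$, which is $\geq R_0$ by construction, and then asserting that the forward system \eqref{systemeEDS} admits a solution with these parameters. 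You should adopt the same choice (and, as the paper's footnote before \eqref{sde:M} acknowledges, in full rigour one would allow $Y_0$ to be $\mathcal F_0$--measurable, which would make your original argument go through without modification).
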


\section{The Principal's problem: optimal control of a McKean-Vlasov SDE}\label{section:mckean}

This section is devoted to the proof of our main result, that is to say that the problem of the Principal amounts to solving a so--called mean--field type control problem.

\subsection{Rewriting the Principal's problem }\label{section:rewritingPP}
Using the characterisation of $\Xi$ provided by Corollary \ref{corollary:xi}, we have immediately\begin{align*}
U_0^P&= \sup_{(\chi,\xi) \in \Xi} \, \sup_{(\mu,q,\alpha) \in \rm{MF}^r(\chi,\xi)} \,  \,\mathbb E\left[U_P\left(X_T-\xi-\int_0^T \chi_s ds\right)\right]
= \sup_{Y_0\geq R_0} U^P_0(Y_0),
\end{align*}
where
 $$U_0^P(Y_0):=\sup_{(\chi,Z)\in \mathcal X\times\mathcal Z(\chi)} \, \sup_{(\mu,q,\alpha) \in {\rm{MF}^r}\big(\chi,U_A^{(-1)}\big(Y_T^{Y_0,Z}\big)\big)}  \,\mathbb E^{\P^{\mu,q,\alpha}}\bigg[\left(X_T-U_A^{-1}(Y^{Y_0,\mathcal Z}_T)-\int_0^T \chi_s ds\right)\bigg].$$

Hence, for any $Y_0\geq R_0$, we can identify $U_0^P(Y_0)$ as the value function of a stochastic optimal control problem with a two--dimensional state variable $M^{\chi,Z}:=(X,Y^{Y_0,Z}(\chi))^\top$ controlled by the processes $(\chi,Z)\in\mathcal X\times\mathcal Z(\chi)$. Introducing the following two functions, depending on the map $a^\star$ from Assumption \ref{assumption:unique} 

\vspace{0.5em}
$\bullet$ $C:[0,T]\times \mathcal C^2 \times \mathcal P(\mathcal C^2)\times \mathfrak P(\mathbb R) \times \mathbb R\times \R_+\longrightarrow \mathbb R^2$ defined for any $(t,m,\mu,q,z,\chi)\in [0,T]\times \mathcal C^2\times \mathcal P(\mathcal C^2)\times \mathfrak P(\mathbb R)\times\mathbb R\times\mathbb R_+ $ by
$$C(t,m,\mu,q,z,\chi):= \begin{pmatrix}
    b(t,m^1, \mu^1, q, a^\star(t,m^1,z,\mu^1,q)) \\
k(t,m^1,\mu^1,q)m^2(t) + c(t,m^1,\mu^1,q,a^\star(t,m^1,z,\mu^1,q))-u_A(t,m^1,\mu^1,q,\chi)
\end{pmatrix}. $$

$\bullet$  $S: [0,T]\times \mathcal C^2\times \mathbb R \longrightarrow \mathcal M_{2}(\mathbb R)$ defined for any $(t,m,z)\in [0,T]\times \mathcal C^2\times \mathbb R$ by
$$S(t,m,z):= \begin{pmatrix}
  \sigma_t(m^1)& 0\\
  \sigma_t(m^1) z&0
\end{pmatrix},$$
the dynamics of $M^{\chi,Z}$ is then given by\footnote{In full generality, we should have let the initial value of $Y$ be a probability measure as well. However, since the law of $Y$ never appears in the problem, we fixed it as a constant for simplicity. Notice that in the examples below, we will modify the criterion of the Principal and let the law of $Y$ play a role. This would necessitate to rewrite the current section, which we refrain from doing for ease of notations.}
\begin{equation}\label{sde:M}
\begin{cases}
\displaystyle M_t^{\chi,Z}= \begin{pmatrix}\psi\\Y_0\end{pmatrix}+\int_0^tC(s,M^{\chi,Z},\mu, q_s, Z_s,\chi_s)ds+ \int_0^tS(s,M^{\chi,Z},Z_s){\mathbf 1}_{2}dW^{a^\star(M^1,Z,\mu^1,q)}_s,\\[0.8em]
 \displaystyle \mu=\mathbb P^{a^\star(M^1,Z,\mu^1,q)}\circ \left(M^{\chi,Z}\right)^{-1},\\[0.8em]
\displaystyle q_t=\mathbb P^{a^\star(M^1,Z,\mu^1,q)}\circ \left(a^\star(t,M^1,Z_t,\mu^1,q_t)\right)^{-1},\ \text{for Lebesgue $a.e.$ $t\in[0,T]$}.
\end{cases}
\end{equation}

Introducing finally the map $G:\mathbb R^2 \longrightarrow \mathbb R$, such that for any $m\in \mathbb R^2$, $G(m):=m^1-U_A^{-1}(m^2), $
the problem of the Principal can finally be linked to
\begin{equation}\label{pb:mckeanvlasov}
 U_0^P(Y_0)= \sup_{(\chi,Z)\in \mathcal X\times\mathcal Z(\chi)} \, \sup_{(\mu,q,\alpha) \in {\rm{MF}^r}\big(\chi,U_A^{(-1)}\big(Y_T^{Y_0,Z}\big)\big)}  \,\mathbb E^{\P^{\mu,q,\alpha}}\bigg[U_P\bigg(G\big(M_T^{\chi,Z}\big)-\int_0^T\chi_s ds\bigg)\bigg],
\end{equation}

\begin{Remark}
Notice that the dynamics of $M$ does not \textcolor{black}{involve the second marginal of $\mu$. } We nonetheless used this notation to stay within the framework of {\rm \cite{pham2015bellman}} or {\rm \cite{carmona2015forward}}.
\end{Remark}
Solving such a problem in full generality goes far beyond the scope of the present paper, and this question is actually the subject of a great number of current studies. Once again, our main message here is that the {\it a priori} quite complicated problem faced by the Principal, consisting in finding a Stackelberg equilibrium between himself and a mean--field equilibrium of interacting Agents, is actually amenable to a dynamic programming approach, and leads to a simpler problem of mean--field type control. 

\vspace{0.5em}
In the subsequent section, we will describe informally one possible approache to solve the problem of the Principal which has been proposed in the literature, before providing several explicitly solvable examples in Section \ref{section:ex}.
\vspace{0.5em}

\subsection{An approach using the dynamic programming principle in the Markovian case}\label{section:PW}

The approach that we present here (but not the first chronologically) to solve \eqref{pb:mckeanvlasov} is mainly based on the recent papers \cite{bayraktar2016randomized,pham2016linear,pham2015bellman,pham2015discrete,pham2016dynamic} (see the references therein for earlier results) and consists in using the dynamic programming principle and solving the corresponding Hamilton--Jacobi--Bellman equation in an infinite dimensional space. Of course, this requires to work in a Markovian framework, namely that, abusing notations slightly, for any $(t,x,\mu,q,a)\in[0,T]\times\mathcal C\times \mathcal P(\mathcal C)\times\mathfrak P(\R)\times \R_+$
$$b(t,x,\mu,q,a)=b(t,x(t),\mu_t,q,a),\ c(t,x,\mu,q,a)=c(t,x(t),\mu_t,q,a),\ k(t,x,\mu,q)=k(t,x(t),\mu_t,q),$$
where for any $(t,\mu)\in[0,T]\times\mathcal P(\mathcal C)$, we define $\mu(t)\in\mathcal P(\mathbb R)$ by
$$\mu_t[A]:=\mu\left[\left\{\omega\in\Omega,\ \omega(t)\in A\right\}\right],\ \text{for every $A\in\mathcal B(\mathbb R)$}.$$

In order to give a precise meaning to the notion of differentiability in this framework, we follow \cite[Section 4.2]{pham2015bellman}, which is based on the original ideas of Lions \cite{lions2007theorie} (see also the lecture notes of Cardaliaguet \cite{cardaliaguet2010notes}), and introduce the differentiability with respect to a probability measure, based on a lifting procedure. 

\vspace{0.5em}
Let $n$ be a positive integer and let $u: \mathcal P(\mathbb R^n) \longrightarrow \mathbb R$. We define $\tilde u: L^2(\Omega,\mathcal F_T,\mathbb R^n)\longrightarrow \mathbb R$ by $\tilde u(\eta):= u(\mathcal L(\eta))$ for any $\eta \in L^2(\Omega,\mathcal F_T,\mathbb R^n)$. We then say that $u$ is differentiable on $\mathcal P(\mathbb R^n)$ if $\tilde u$ is Fr\'echet differentiable on $L^2(\Omega,\mathcal F_T,\mathbb R^n)$ and we denote by $[D\tilde u](\eta)$ its Fr\'echet derivative in the direction of $\eta$, which can be identified as a linear operator from $L^2(\Omega,\mathcal F_T,\mathbb R^n)$ into $\mathbb R$. According to Riesz's Theorem, for any $\eta \in L^2(\Omega,\mathcal F_T,\mathbb R^n)$ there exists $D\tilde u(\eta)\in L^2(\Omega,\mathcal F_T,\mathbb R^n)$ such that for any $Y\in L^2(\Omega,\mathcal F_T,\mathbb R^n)$
$$[D\tilde u](\eta)(Y)=\mathbb E[D\tilde u(\eta) \cdot Y]. $$
We then introduce the notation $\partial_\rho u(\mathcal L(\eta)):\mathbb R^n \longrightarrow \mathbb R^n$ as follows, $ D\tilde u (\eta)=: \partial_\rho u(\mathcal L(\eta))(\eta),$
and call the latter the derivative of $u$ at $\rho=\mathcal L (\eta)$. We now say that $u$ is partially twice differentiable on $\mathcal P(\mathbb R^n)$ if
$u$ is differentiable in the above sense, and such that the mapping $(\rho,x)\in \mathcal P(\mathbb R^n)\times \R^n \longmapsto \partial_\rho u(\rho)(x)$ is continuous at any point $(\rho,x)$, with $x$ being in the support of $\rho$, and if for any $\rho\in \mathcal P(\R^n)$, the map $x\in \R^n\longmapsto \partial_\rho u(\rho)(x)$ is differentiable. We denote the gradient of $\partial_\rho u(\rho)(x)$ by $\partial_x\partial_\rho u(\rho)(x)\in \mathbb R^n\times \R^n$. We also recall that a generalised It\=o's formula has been proved by Chassagneux, Crisan and Delarue \cite{chassagneux2014probabilistic}, for functions from $\mathcal P(\mathbb R^n)$ into $\R$. We will need to distinguish between two cases, depending on whether $C$ depends on the variable $q$ or not.
\subsubsection{No dependance on the law of the controls}\label{section:pasdependent}
In this section, we assume that the drift $b$, the cost function $c$ and the discount factor $k$ do not depend on the variable $q$, which means that the drift of the Agent's output is only impacted by the actions of the other players through their outputs, and not their actions. This is the situation considered in \cite{pham2016dynamic}, from which we deduce that the Hamilton--Jacobi--Bellman equation associated with the McKean--Vlasov optimal control problem \eqref{pb:mckeanvlasov} is
\begin{equation}\label{PDE:PW}
\begin{cases}
\displaystyle 
       -\partial_t v(t,\rho)-\underset{{(\chi,z)\in \mathbb R^+\times\mathbb R}}{\sup}  \, H(t,\rho,\partial_\rho v, \partial_x \partial_\rho,z,\chi )=0,\ (t,\rho)\in [0,T)\times \mathcal P(\mathbb R^2),\\
     \displaystyle   v(T,\rho)=\int_{\mathbb R^2} U_P\left(G(x)\right)\rho(dx),\ \rho\in\mathcal P(\R^2),
      \end{cases}
\end{equation}
with
$$H(t,\rho,\partial_\rho v, \partial_x \partial_\rho v,z,\chi ):= \int_{\mathbb R^2} \mathbb L^{\chi,z} v(t,\rho)(x) \rho(dx)$$ and
where for any $\varphi:[0,T]\times \mathcal P(\mathbb R^2)\longrightarrow \R$, which is continuously differentiable in $t$ and twice continuously differentiable in $\rho$, for any $(t,\rho)\in[0,T]\times \mathcal P(\mathbb R^2)$, $(\chi,z)\in \R^+\times\mathbb R) $, the map $ \mathbb L^{\chi,z} \varphi(t,\rho)$ is defined from $\mathbb R^2$ into $\mathbb R$ by
\begin{align*}
 \mathbb L^{\chi,z} \varphi(t,\rho)(x)&:=\partial_\rho \varphi (t,\rho) \cdot C(t, x, \rho,z,\chi)+\frac12 \text{Tr}\left[\partial_x\partial_\rho \varphi(t,\rho)(x) SS^\top(t,x,z)\right]-\chi, \ x\in\R^2.
\end{align*}
 \textcolor{black}{Since in Section \ref{section:ex} we will focus on examples fitting with the framework of Section \ref{section:dependent} below, the proof of the following verification result to solve \eqref{pb:mckeanvlasov} is omitted. It is however a classical result of stochastic control theory on the space of measure, and we refer to Theorem \ref{thm:verif:depend} in the section below for a version in a more general framework, but with a more restrictive class of controls.  }

 \begin{Theorem}\label{thm:verif:nodepend} Let $v$ be a continuous map from $[0,T]\times \mathcal P_2(\mathbb R^2)$ into $\mathbb R$ such that $v(t,\cdot)$ is twice continuously differentiable on $\mathcal P_2(\mathbb R^2)$ and such that $v(\cdot,\mu)$ is continuously differentiable on $[0,T]$. Suppose that $v$ is solution to \eqref{PDE:PW} such that the supremum is attained for some optimizers of $H$ denoted by $\chi^\star(t,\rho,\partial_\rho v,\partial_x\partial_\rho v)$ and $Z^\star(t,\rho,\partial_\rho v,\partial_x\partial_\rho v)$ for any $(t,\rho)\in [0,T]\times \mathcal P_2(\mathbb \R^2)$ such that $(t,\rho)\longmapsto \chi^\star(t,\rho,\partial_\rho v,\partial_x\partial_\rho v), Z^\star(t,\rho,\partial_\rho v,\partial_x\partial_\rho v)$ are measurable. \vspace{0.3em}
 
 Let $M_0$ be a square random variable $\mathbb R^2$ valued with law $\mu_0\in\mathcal P_2(\R^2)$ and assume moreover that the following McKean--Vlasov SDE \begin{equation}\label{sde:M:optim}
\begin{cases}
\displaystyle M_t^{\chi^\star,Z^\star}= \begin{pmatrix}\psi\\Y_0\end{pmatrix}+\int_0^tC(s,M^{\chi^\star,Z^\star},\mu, Z^\star(s,\mu,\partial_\mu v,\partial_x\partial_\mu v),\chi^\star(s,\mu,\partial_\mu v,\partial_x\partial_\mu v))ds\\[0.8em]
\displaystyle \hspace{4.5em}+ \int_0^tS(s,M^{\chi^\star,Z^\star},Z^\star(s,\mu,\partial_\mu v,\partial_x\partial_\mu v)){\mathbf 1}_{2}dW^{a^\star(M^1,Z,\mu^1)}_s,\\[0.8em]
 \displaystyle \mu=\mathbb P^{a^\star(M^1,Z^\star,\mu^1)}\circ \left(M^{\chi^\star,Z^\star}\right)^{-1}\\

\end{cases}
\end{equation}
admits a solution $M^{\chi^\star,Z^\star}$. Then if the pair of control $\left(Z^\star(s,\mu,\partial_\mu v,\partial_x\partial_\mu v),\chi^\star(s,\mu,\partial_\mu v,\partial_x\partial_\mu v)\right)$ is in $\mathcal X\times \mathcal Z(\chi)$, we have
$$v(0,\mu_0 )= U_0(Y_0), $$
and $(\chi^\star(\cdot,\mu,\partial_\mu v,\partial_x\partial_\mu v), Z^\star(\cdot,\mu,\partial_\mu v,\partial_x\partial_\mu v))$ are optimal in the problem of the Principal \eqref{pb:mckeanvlasov}.
 \end{Theorem}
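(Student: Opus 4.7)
The plan is a classical verification argument for mean--field type control problems, following \cite{pham2016dynamic,pham2015bellman} and based on the It\^o formula for functions of a flow of probability measures established in \cite{chassagneux2014probabilistic}. The key observation is that the operator $\mathbb L^{\chi,z}$ appearing in the HJB equation \eqref{PDE:PW} decomposes as $\mathbb L^{\chi,z}v=\mathcal L^{\chi,z}v-\chi$, where $\mathcal L^{\chi,z}$ is the genuine infinitesimal generator of the controlled McKean--Vlasov SDE \eqref{sde:M}, so that the $-\chi$ term encodes exactly the running cost of the Principal.

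For the upper bound, I would fix an arbitrary admissible $(\chi,Z)\in\mathcal X\times\mathcal Z(\chi)$, consider the associated solution $M^{\chi,Z}$ to \eqref{sde:M} under $\P^{a^\star(M^1,Z,\mu^1)}$, and write $\mu_t^{\chi,Z}:=\mathcal L(M_t^{\chi,Z})$ for its flow of marginals. Applying the It\^o--Lions formula to $s\longmapsto v(s,\mu_s^{\chi,Z})$ on $[0,T]$ and using the terminal condition $v(T,\rho)=\int_{\R^2}U_P(G(x))\rho(dx)$, I obtain
$$\mathbb E^{\P^{a^\star}}\!\left[U_P(G(M_T^{\chi,Z}))-\int_0^T\chi_s\,ds\right]-v(0,\mu_0)=\int_0^T\!\left[\partial_s v(s,\mu_s^{\chi,Z})+\mathbb E^{\P^{a^\star}}\!\big[\mathbb L^{\chi_s,Z_s}v(s,\mu_s^{\chi,Z})(M_s^{\chi,Z})\big]\right]ds.$$
Plugging the realisations $(\chi_s(\omega),Z_s(\omega))$ into the pointwise HJB inequality $\partial_s v(s,\rho)+\int\mathbb L^{\chi,z}v(s,\rho)(x)\rho(dx)\le 0$ and taking expectations shows that the right--hand side is nonpositive, hence $v(0,\mu_0)\ge U_0^P(Y_0)$ after taking a supremum over $(\chi,Z)$ and over the mean--field equilibria in $\mathrm{MF}^r$.

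For attainment, I would repeat the same computation with the feedback $(\chi^\star,Z^\star)$ and the associated solution $M^{\chi^\star,Z^\star}$ of \eqref{sde:M:optim}, which is admissible by assumption. Because $(\chi^\star,Z^\star)$ attains the pointwise supremum in $H$, the HJB inequality becomes an equality and the It\^o expansion yields
$$v(0,\mu_0)=\mathbb E\!\left[U_P\big(G(M_T^{\chi^\star,Z^\star})\big)-\int_0^T\chi^\star_s\,ds\right]\le U_0^P(Y_0),$$
which gives the reverse inequality and simultaneously the optimality of the feedback $(\chi^\star,Z^\star)$.

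The main obstacle will be the rigorous justification of the integrability conditions required to apply the It\^o--Lions formula and to discard the genuine martingale part in expectation. This entails controlling the growth of $v$, $\partial_\rho v$ and $\partial_x\partial_\rho v$ against the moments of $M^{\chi,Z}$ guaranteed by $(\chi,Z)\in\mathcal X\times\mathcal Z(\chi)$, together with the exponential moments of the Dol\'eans--Dade exponential defining $\P^{\mu,q,\alpha}$ in \eqref{def:probastar}. A second delicate point is that the comparison controls $(\chi,Z)$ are \emph{a priori} open--loop $\F$--adapted processes, so that $\mathbb E^{\P^{a^\star}}[\mathbb L^{\chi_s,Z_s}v(s,\mu_s^{\chi,Z})(M_s^{\chi,Z})]$ is not literally the Hamiltonian $H$ evaluated at the random $(Z_s,\chi_s)$; this is handled classically by invoking the HJB inequality trajectory by trajectory before taking expectations.
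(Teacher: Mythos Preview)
Your proposal is correct and follows essentially the same verification strategy as the paper, which in fact omits the proof of this theorem and refers instead to the analogous Theorem \ref{thm:verif:depend}: apply the It\^o--Lions formula from \cite{chassagneux2014probabilistic} to $s\mapsto v(s,\mu_s^{\chi,Z})$, use the HJB inequality for the upper bound, and obtain equality for the optimiser $(\chi^\star,Z^\star)$. Your additional remarks on the integrability needed to discard the martingale part and on the open--loop nature of the comparison controls (which forces one to use the pointwise HJB inequality for each realisation $(\chi_s(\omega),Z_s(\omega))$ before taking expectations) are pertinent refinements that the paper's sketch for Theorem \ref{thm:verif:depend} leaves implicit, the latter point being precisely why that theorem restricts to feedback controls in $\tilde{\mathcal Z}_p$ while the present statement does not.
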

 
%
%
 
 \begin{Remark}
 \textcolor{black}{The function $v$ solution to PDE \eqref{PDE:PW} has to be seen as the continuation utility of the Principal. The proof of this theorem requires only to consider $v$ at time $t=0$, but more generally, a dynamic version of it is of course available in the spirit of {\rm \cite{pham2016dynamic}} for instance. }
 \end{Remark}

\subsubsection{Dependance on the law of the controls}\label{section:dependent}
We now turn to the case where the dependance with respect to the law $q$ impacts the drift $b$, the cost function $c$ and the discount factor $k$. This situation was recently studied in \cite{pham2015bellman}, and requires some modifications compared to the previous case. 

\vspace{0.5em}
Let $p\geq 1$ and let $\text{Lip}_p([0,T]\times \R\times \mathcal P(\mathbb R); \R)$ be the set of deterministic measurable functions $\tilde Z:[0,T]\times \R\times\mathcal P(\R)\longrightarrow\R$, such that for any $t\in [0,T]$, the map $(x,\mu) \longmapsto \tilde Z(t,x,\mu)$ is such that there exists a constant $C>0$ satisfying for any $(t,x,x',\mu,\mu')\in [0,T]\times \R^2\times \R^2\times \mathcal P(\R^2)\times \mathcal P(\R^2)$
$$|\tilde Z(t,x,\mu)-\tilde Z(t,x',\mu')|\leq C\left(\|x-x'\|+\mathcal W_{\R^2,p}(\mu,\mu') \right) ,
\text{ and }\int_0^T \tilde Z(t,\mathbf 0_2,\delta_{\mathbf 0_2}) dt<+\infty. $$  We now restrict our attention to control processes $(\chi,Z)\in\mathcal X\times\mathcal Z(\chi)$ which are actually (partially) Markovian, in the sense that there is a map $\tilde Z:[0,T]\times\mathbb R^2\times\mathcal P(\mathbb R^2)$ in $\text{Lip}_p([0,T]\times \R\times \mathcal P(\mathbb R); \mathbb R)$ for some $p\geq 1$, such that
$$Z_t=\tilde Z\big(t,M^{\chi,\tilde Z}_t,\mathcal L(M^{\chi,\tilde Z}_t)\big),\ t\in[0,T],$$ and such that
SDE \eqref{sde:M} admits a solution $M^{\chi,Z}$. We denote by $\tilde {\mathcal Z}_p$ the set of such processes.
The main difference with Section \ref{section:pasdependent} lies in the HJB equation associated with the Principal's problem \eqref{pb:mckeanvlasov} which becomes
\begin{equation}\label{PDE:PW:dependancyq}
\begin{cases}
\displaystyle 
       -\partial_t v(t,\rho)-\underset{{(\chi,\tilde z)\in \mathbb R^+\times \text{Lip}(\mathbb R^2;\mathbb R)}}{\sup}  \, \, \tilde H(t,\rho,\partial_\rho v, \partial_x \partial_\rho,\tilde z,\chi ) =0,\ (t,\rho)\in [0,T)\times \mathcal P(\mathbb R^2),\\
     \displaystyle   v(T,\rho)=U_P\left(\int_{\mathbb R^2} G(x)\rho(dx) \right),\ \rho\in\mathcal P(\R^2),
      \end{cases}
\end{equation}
where $\text{Lip}(\mathbb R^2;\mathbb R)$ denotes the set of Lipschitz function from $\mathbb R$ into $\mathbb R$ with 
$$\tilde H(t,\rho,\partial_\rho v, \partial_x \partial_\rho,\tilde z,\chi ):= \int_{\mathbb R^2} \widetilde{\mathbb L}^{\chi, \tilde z} v(t,\rho) \rho(dx)$$  and
where for any $\varphi:[0,T]\times \mathcal P(\mathbb R^2)\longrightarrow \R$, which is continuously differentiable in $t$ and twice continuously differentiable in $\rho$, for any $(t,\rho)\in[0,T]\times \mathcal P(\mathbb R^2)$, $(\chi,\tilde z)\in \R^+\times\text{Lip}(\mathbb R^2;\mathbb R) $, the map $ \widetilde{\mathbb L}^{\chi,\tilde z} \varphi(t,\rho)$ is defined from $\mathbb R^2$ into $\mathbb R$ by
\begin{align*}
 \widetilde{\mathbb L}^{\chi,\tilde z} \varphi(t,\rho)(x)&:=\partial_\rho \varphi (t,\rho) \cdot C(t, x, \rho,q_t,\tilde z(x),\chi)+\frac12 \text{Tr}\left[\partial_x\partial_\rho \varphi(t,\rho)(x) SS^\top(t,x,\tilde z(x))\right]-\chi, \ x\in\R^2.
\end{align*}

 \textcolor{black}{We thus have the following verification result to solve the problem \eqref{pb:mckeanvlasov}  }

 \begin{Theorem}\label{thm:verif:depend} Let $v$ be a continuous map from $[0,T]\times \mathcal P_2(\mathbb R^2)$ into $\mathbb R$ such that $v(t,\cdot)$ is twice continuously differentiable on $\mathcal P_2(\mathbb R^2)$ and such that $v(\cdot,\mu)$ is continuously differentiable on $[0,T]$. Suppose that $v$ is solution to \eqref{PDE:PW} such that the supremum is attained for some optimizers of $\tilde H$ denoted by $\chi^\star(t,\rho,\partial_\rho v,\partial_x\partial_\rho v)$ and $\widetilde{Z}^\star(t,\cdot,\rho)\in \text{Lip}(\mathbb R^2;\mathbb R)$ for any $(t,\rho)\in [0,T]\times \mathcal P_2(\mathbb \R^2)$ such that $(t,\rho)\longmapsto \chi^\star(t,\rho,\partial_\rho v,\partial_x\partial_\rho v)$ is measurable and $(t,x,\rho)\longmapsto \widetilde{Z}^\star(t,x,\rho)$ is in $\text{Lip}_p([0,T]\times \R\times \mathcal P(\mathbb R); \mathbb R)$ for some $p\geq 1$. \vspace{0.3em}
 
 Let $M_0$ be a square random variable $\mathbb R^2$ valued with law $\mu_0\in\mathcal P_2(\R^2)$ and assume moreover that the following McKean-Vlasov SDE \begin{equation}\label{sde:M:optim2}
\begin{cases}
\displaystyle M_t^{\chi^\star,Z^\star}= \begin{pmatrix}\psi\\Y_0\end{pmatrix}+\int_0^tC(s,M^{\chi^\star,\widetilde{Z}^\star},\mu, \widetilde{Z}^\star(s,M_s^{\chi^\star,Z^\star},\mu),\chi^\star(s,\mu,\partial_\mu v,\partial_x\partial_\mu v))ds\\[0.8em]
\displaystyle \hspace{4.5em}+ \int_0^tS(s,M^{\chi^\star,\widetilde{Z}^\star}, \widetilde{Z}^\star(s,M_s^{\chi^\star,Z^\star},\mu)){\mathbf 1}_{2}dW^{a^\star(M^1,\widetilde{Z}^\star,\mu^1)}_s,\\[0.8em]
 \displaystyle \mu=\mathbb P^{a^\star(M^1, \widetilde{Z}^\star(\cdot,M^{\chi^\star,Z^\star},\mu),\mu^1)}\circ \left(M^{\chi^\star,\widetilde{Z}^\star}\right)^{-1},\\
\displaystyle q_t=\mathbb P^{a^\star(M^1,\tilde Z^\star,\mu^1,q)}\circ \left(a^\star(t,M^1,\widetilde{Z}^\star(\cdot,M^{\chi^\star,Z^\star},\mu),\mu^1,q_t)\right)^{-1},\ \text{for Lebesgue $a.e.$ $t\in[0,T]$}.
\end{cases}
\end{equation}
admits a solution $M^{\chi^\star,Z^\star}$. Then if the pair of control $\left(\chi^\star(s,\mu,\partial_\mu v,\partial_x\partial_\mu v), \widetilde{Z}^\star(\cdot,M^{\chi^\star,Z^\star},\mu)\right)$ is in $\mathcal X\times \mathcal Z(\chi)$, we have
$$v(0,\mu_0 )= U_0(Y_0), $$
and $(\chi^\star(\cdot,\mu,\partial_\mu v,\partial_x\partial_\mu v), Z^\star(\cdot,\mu,\partial_\mu v,\partial_x\partial_\mu v))$ are optimal in the problem of the Principal \eqref{pb:mckeanvlasov}.
 \end{Theorem}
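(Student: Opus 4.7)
The plan is to carry out a standard verification--type argument for McKean--Vlasov control problems, based on the generalised It\^o formula on the space of probability measures of Chassagneux, Crisan and Delarue \cite{chassagneux2014probabilistic}. All the necessary ingredients are already in place: $v$ is assumed sufficiently smooth in both arguments, the candidate optimisers $(\chi^\star, \tilde Z^\star)$ are assumed to exist with the required measurability and Lipschitz regularity, and the McKean--Vlasov system \eqref{sde:M:optim2} driven by them is assumed to admit a solution with admissible controls.

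First, I would fix an arbitrary admissible pair $(\chi, \tilde Z) \in \mathcal X \times \tilde{\mathcal Z}_p$, let $M^{\chi, \tilde Z}$ be the associated solution to \eqref{sde:M}, and set $\rho_t := \mathcal L(M_t^{\chi, \tilde Z})$. Applying the generalised It\^o formula to $t \mapsto v(t, \rho_t)$ yields, with the coefficients $C$ and $S$ defined in Section \ref{section:rewritingPP},
\begin{align*}
v(T, \rho_T) - v(0, \mu_0) = \int_0^T \bigg\{&\partial_t v(t, \rho_t) + \int_{\mathbb R^2} \Big[\partial_\rho v(t, \rho_t)(x) \cdot C(t, x, \rho_t, q_t, \tilde Z(t, x, \rho_t), \chi_t) \\
&+ \tfrac12 \text{Tr}\big[\partial_x \partial_\rho v(t, \rho_t)(x) SS^\top(t, x, \tilde Z(t, x, \rho_t))\big]\Big] \rho_t(dx)\bigg\} dt.
\end{align*}
Since $\rho_t$ is a probability measure and $\chi_t$ does not depend on $x$, the bracketed integrand coincides with $\widetilde{\mathbb L}^{\chi_t, \tilde Z(t,\cdot,\rho_t)} v(t, \rho_t)(x) + \chi_t$, and integration against $\rho_t$ produces $\tilde H(t, \rho_t, \partial_\rho v, \partial_x \partial_\rho v, \tilde Z(t,\cdot,\rho_t), \chi_t) + \chi_t$. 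The HJB equation \eqref{PDE:PW:dependancyq} then enforces $\partial_t v(t, \rho_t) + \tilde H \leq 0$, whence
$$v(0, \mu_0) \geq v(T, \rho_T) - \int_0^T \chi_t\, dt.$$
Combining with the terminal condition on $v$, taking expectations, and using Corollary \ref{corollary:xi} together with Theorem \ref{thm:agent} to identify $Y_T^{Y_0, Z}$ with $U_A(\xi)$ provides the upper bound $v(0, \mu_0) \geq U_0^P(Y_0)$.

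For the reverse inequality, substituting $(\chi, \tilde Z) = (\chi^\star, \tilde Z^\star)$ saturates every HJB inequality by the very definition of the optimisers of $\tilde H$, and the assumed solvability of \eqref{sde:M:optim2} with admissible controls shows that the upper bound is attained. This gives $v(0, \mu_0) = U_0^P(Y_0)$ and the optimality of $(\chi^\star, \tilde Z^\star)$ in \eqref{pb:mckeanvlasov}. The main obstacle in a fully rigorous implementation is verifying the integrability conditions required to apply the generalised It\^o formula and to interchange the expectation with the time integral of the generator; these follow from the exponential--moment assumptions in $(\mathbf{B}^{p,\ell,\eta})$ and $(\mathbf{C}^{p,\ell,m,\underline m})$ combined with the admissibility $(\chi, \tilde Z) \in \mathcal X \times \tilde{\mathcal Z}_p$, whose definition is tailor--made for this purpose. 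A further delicate point, specific to the dependence on the law of the controls, is that $q_t$ enters $C$ via the implicit fixed--point constraint $q_t = \mathbb P \circ (a^\star(t, M^1_t, \tilde Z^\star(t, M_t, \rho_t), \rho_t^1, q_t))^{-1}$, which must be handled simultaneously with the It\^o computation; this is absorbed into the standing hypothesis that \eqref{sde:M:optim2} admits a solution.
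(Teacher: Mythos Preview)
Your proposal is correct and follows essentially the same verification argument as the paper: apply the generalised It\^o formula on the space of measures to $t\mapsto v(t,\rho_t)$ along an arbitrary admissible control, use the HJB inequality to obtain $v(0,\mu_0)\geq U_0^P(Y_0)$, and then saturate with the candidate optimisers $(\chi^\star,\tilde Z^\star)$ to get equality. If anything, your handling of the running cost $\chi_t$ (separating it from $\widetilde{\mathbb L}$ so that the It\^o expansion matches the HJB structure) is more explicit than the paper's own write--up.
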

 \begin{proof}
 Let $(\chi,Z)$ be any control in $\tilde {\mathcal Z}_p$ with associated $\tilde Z\in \text{Lip}_p([0,T]\times \R\times \mathcal P(\mathbb R); \mathbb R)$. We fix a random variable $M_0$ together with its law $\mu_0$. Let $M^{\chi,Z}$ be a solution to the McKean--Vlasov SDE \eqref{sde:M} with initial condition $(M_0,\mu_0)$. For this control $(\chi,Z)$, we fix some triplet $(\mu,q_t,\alpha)$ in $ {\rm{MF}^r}\big(\chi,U_A^{(-1)}\big(Y_T^{Y_0,Z}\big)\big) $. According to It\=o's formula on the space of measures (see for instance \cite{chassagneux2014probabilistic}), we have classically
\begin{align*}
 v(T,\mu) &=  v(0,\mu_0)+\int_0^T \left(\partial_t v(s,\mu) + \int_\mathbb R \widetilde{\mathbb L}_t^{\chi,\tilde Z} v(s,\mu)(x) \nu(dx)\right) ds\\
 &\leq v(0,\mu_0)+\int_0^T \left(\partial_t v(s,\mu) + \int_\mathbb R \underset{{(\chi,\tilde z)\in \mathbb R^+\times \text{Lip}(\mathbb R^2;\mathbb R)}}{\sup}\, \widetilde{\mathbb L}_t^{\chi,\tilde z} v(s,\mu)(x) \nu(dx)\right) ds\leq  v(0,\mu_0).
 \end{align*}
 Then, we deduce that
$$ v(0,\mu_0)\geq v(T,\mu)= \mathbb E^{\P^{\mu,q,a^\star}}\bigg[\left(X_T-U_A^{-1}(Y^{Y_0, Z}_T)-\int_0^T \chi_s ds\right)\bigg] .$$
Thus, 
$$ v(0,\mu_0)\geq \sup_{(\chi,Z)\in \tilde {\mathcal Z}_p} {\sup_{(\mu,q,\alpha) \in {\rm{MF}^r}\big(\chi,U_A^{(-1)}\big(Y_T^{Y_0,Z}\big)\big)}  } \mathbb E^{\P^{\mu,\alpha}}\bigg[\left(X_T-U_A^{-1}(Y^{Y_0, Z}_T)-\int_0^T \chi_s ds\right)\bigg] .$$

Let now $(\chi^\star, Z^\star)$ optimizer of the Hamiltonian $\tilde H$ with associated measures $(\mu,q_t)\in \mathcal P_2(\R^2)\times \mathfrak P(\mathbb R)$ such that \eqref{sde:M:optim2} admits a solution. Then, all the inequalities above becomes equalities which provides the result.
 \end{proof}

In the example studied in Section \ref{section:ex}, we will provide an intuitive optimal contract such that the corresponding value function of the Principal is smooth and solves directly PDE \eqref{PDE:PW}. Combined with a verification theorem (see for instance Theorems \ref{thm:verif:nodepend} and \ref{thm:verif:depend} in \cite{pham2015bellman} or Theorem \ref{thm:verif:nodepend} above), this ensures that we have indeed solved the Principal's problem. Notice however that in general, one should not expect existence of classical solutions to the above PDE, and one would have to rely on an appropriately defined notion of viscosity solutions. We refer the reader to \cite{pham2016dynamic} for one possible approach.

\subsubsection{On the admissibility of the optimal contract.}\label{section:admissibility}
This short section adresses the problem of admissibility of the optimal contracts derived from the dynamic programming approach we just described. Assume that the HJB equation \eqref{PDE:PW:dependancyq} admits\footnote{We have to make clear the definition of a solution to an HJB equation. Most of the time, we have to deal with solutions in the sense of viscosity for PDEs on Hilbert spaces, introduced by Lions in \cite{lions1988viscosity,lions1989viscosity,lions1989viscosity2}, but in the example studied in Section \ref{section:ex}, we will see that the HJB equation under interest will admits a smooth solution.} a solution, and denote by $(\chi^\star,Z^\star)$ the optimisers of the Hamiltonian. Recall that a contract $(\chi,\xi)$ is admissible, \textit{i.e.} $(\chi,\xi)\in \Xi$ if it is in $\mathfrak C$ and if $\textbf{(MFG)}(\chi,\xi)$ has a solution in $\text{MFI}^r(\chi,\xi)$. According to Corollary \ref{corollary:xi}, the admissibility of $\xi$, and so the existence of a mean--field equilibrium, relies on the existence of a solution in the sense of Definition \eqref{definition:solsystemeEDS}, of the system \eqref{systemeEDS}. As explained in \cite[Remark 7.3]{carmona2015probabilistic}, the main difficulty is that the process $Z^\star$ can depend on the law of $X$ and the existence of a solution to the system \eqref{systemeEDS} is not clear. However, if for instance the process $Z^\star$ is deterministic and belongs to the space $L^{\lambda r}([0,T])$, then the system \eqref{systemeEDS} {\color{black}will much more easily admit a solution in the sense of Definition \ref{definition:solsystemeEDS} since the only dependence on $X$ and its law will then come from $b$ and $\sigma$, and the system also decouples. If existence can be proved, we get as a consequence of Corollary \ref{corollary:xi}, that the contract $\xi^\star=U_A^{(-1)}\big(Y_T^{R_0,Z^\star}(\chi^\star)\big)$ is optimal. This will be exactly the situation encountered in our solvable examples, so that admissibility will come almost for free in these cases, using notably the results from \cite{chiang1994mckean,mishura2016existence}.}

\begin{Remark}
Although we will apply the results developed in {\rm \cite{bayraktar2016randomized,pham2016linear,pham2015bellman,pham2015discrete,pham2016dynamic}} in the examples studied in Section \ref{section:ex}, we also would like to mention a second approach to deal with the Principal problem \eqref{pb:mckeanvlasov} based on {\rm \cite{carmona2015forward}}, consisting in applying the Pontryagin stochastic maximum principle, and then study the associated system of fully--coupled FBSDEs. 
%
%
%
\end{Remark}

\section{Some explicitly solvable examples}\label{section:ex}
In this section, we study an explicit model where the dynamic of the output process depends on its mean and its variance. We fix some $p\geq 1$ associated to the definition of the space $\tilde {\mathcal Z}_p$.
 \subsection{Quadratic dynamics and power cost}\label{example:meanvar}
Let $\alpha\in [0,\frac12)$ and $\beta_1,\beta_2, \gamma\geq 0$. We define for any $(s,x,\mu,q,a)\in [0,T]\times \mathbb R\times  \mathcal P(\mathcal C) \times \mathfrak P(\mathbb R)\times \mathbb R_+,$
 $$ b(s,x,\mu,q,a):=a+\alpha x +\beta_1 \int_\R z d\mu_s(z)+\beta_2 \int_\R z dq_s(z) -\gamma V_\mu(s), $$
$$V_\mu(s):=  \int_\R |z|^2 d\mu_s(z) - \left|\int_\R z d\mu_s(z) \right|^2, \text{ and for fixed $n>1$, } c(s,x,\mu,q,a):=c\frac{|a|^n}{n},\; c>0.$$
Recalling the definitions of $\ell,m,\underline m$, we check immediately that Assumptions $(\mathbf{B}^{p,\ell,\eta})$ and $(\mathbf{C}^{p,\ell,m,\underline m})$ hold with $\ell=1$, $m=\underline m=n-1$.
 
 \vspace{0.5em}
 In this model, we have a ripple effect in the sense that both the value of the project managed by the representative, and the average values of the projects managed by the other, impact linearly through the parameter $\beta_1$ the drift of $X$. The better the situation is (that it the further away from $0$ $X$ and his mean are), the better it will be, and conversely if things start to go wrong. Similarly, we assume that large disparities between the values of the projects have a negative impact on the future evolution of the value of the firm, through the variance penalisation parameter $\beta_2$. We interpret this as an indicator of possible frailness and potential instability of the firm, that should be avoided.
 
 \vspace{0.5em}
For the sake of simplicity, we assume in this applicative section that $u_A=0$ (the Agent only receives a terminal payment $\xi$), $k=0$ (no discount factor), $U_A(x)=x, \; x\in \R$ (the Agent is risk neutral). To alleviate notations, we omit the dependence with respect to $\chi$ in every objects previously defined and we denote $\kappa:= \alpha+\beta_1$.\vspace{0.5em}

\noindent  \textcolor{black}{We also assume that the Principal is risk neutral, which can be at least informally justified by appealing to propagation of chaos arguments. Formally, we considers a $N-$players model with exponential utility for the Principal and we assume furthermore\footnote{We would like to thank one of the referee who has suggested this interpretation of the factor $\frac1N$ in \eqref{eq:pbprincipalN}.} that the Principal assignes projects to the Agents so that each individual output process is scaled by the total number of Agents $N$ to ensure stability as $N$ grows of the cumulative output. The problem of the Principal thus writes
\begin{equation}\label{eq:pbprincipalN}U_0^{P,N}:=\sup_{(\chi,\xi)\in\Xi^N} \mathbb E^{\mathbb P_N^{\alpha}}\left[-\exp\left(-\frac{R_P}{N}\left(X_T-\xi-\int_0^T\chi_sds\right)\cdot {\bf 1}_N\right)\right],\end{equation}
where $\Xi^N$ is the set of contracts proposed to the $N$ Agents satisfying technical conditions (see \cite{elie2016contracting,mastrolia2017moral} for further details). By taking the limit when $N\longrightarrow \infty$ in the Principal's problem \eqref{eq:pbprincipalN}, that we expect the Principal to become risk--neutral. In other words, the mean--field version of \eqref{eq:pbprincipalN} for the Principal is
\begin{equation}\label{PrincipalProblem}
U_0^P:= \sup_{(\chi,\xi)\in \Xi}\, \sup_{(\mu,q,\alpha)\in {\rm MF}^r(\chi,\xi)}\, \mathbb E^{\P^{\mu,q,\alpha}}\left[X_T-\xi-\int_0^T \chi_s ds\right].
\end{equation}
Another interpretation\footnote{We thank a referee for proposing this explanation to us.} of the risk-neutrality of the Principal is that it is due to the fact that we expect the random average penalised output in the $N$-players' game to converge to a deterministic quantity, equal to the expectation of the penalised output of a representative agent, as $N$ goes to infinity. Thus, the quantity inside the utility of the Principal becomes deterministic, and he simply needs to maximise this deterministic quantity.
}

\vspace{0.5em}
Notice that this example that we consider does not fall into the class of so--called linear--quadratic problems, which are, as far as we know, the only explicitly solved ones in the mean--field game literature. Indeed, the drift of the the diffusion $X$ in our case is quadratic, due to the variance penalisation, while the cost of effort is a power function with any exponent $n>1$.
  \subsubsection{Agent's optimal effort}
Recall now the BSDE associated to the Agent value function $U_t^A(\xi,\mu,q)$ is given by
\begin{align*}
 Y_t^{ \mu,q}(\xi)=&\ \xi+\int_t^T g^\star\big(s,X, Z_s^{\mu,q}(\xi), \mu,q_s\big)  ds-\int_t^T Z_s^{ \mu,q}(\xi) \sigma dW_s,
\end{align*}
where in this context we can compute that
$$ g^\star\big(s,x, z, \mu,q_s\big)=\frac{|z|^{\frac{n}{n-1}}}{c^{\frac1{n-1}}}\left(1-\frac1{n}\right)+z \left(\alpha x+\beta_1\int_\R z d\mu_s(z)+\beta_2 \int_\R z dq_s(z) - \gamma V_\mu(s)\right).$$
Let $a^\star(Z^{\mu,q})$ be the corresponding optimiser, \textit{i.e.}
$$a^\star_s(Z^{\mu,q}):=\left(\frac{\left|Z_s^{\mu,q}\right|}{c}\right)^{\frac{1}{n-1}}. $$ Thus, according to Theorem \ref{thm:agent},  as soon as the corresponding system \eqref{edsr:agent:max} has a solution denoted by $(Y^\star, Z^\star, \mu^\star,q^\star)$, $(\mu^\star,q^\star,a^\star_s)$ is solution to $\textbf{(MFG)}(\xi)$ and $a^\star_s:= a^\star_s(Z^\star)$ is optimal for the Agent.

\subsubsection{Principal's problem}

Recall from Section \ref{section:dependent} that the HJB equation associated with the Principal's problem is
\begin{equation}\label{HJB:exemple}
\begin{cases}
\displaystyle 
       -\partial_t v(t,\rho)-\underset{{(\chi,Z)\in \mathcal X\times \tilde {\mathcal Z}_p}}{\sup}  \,\left(\int_{\mathbb R^2} \mathbb L^{\chi, Z} v(t,\rho) \rho(dx)\right)=0,\ (t,\rho)\in [0,T)\times \mathcal P(\mathbb R^2),\\
     \displaystyle   v(T,\rho)=\int_{\mathbb R^2} G(x)\rho(dx),\ \rho\in\mathcal P(\R^2).
      \end{cases}
\end{equation}

In our particular case, we can solve explicitly HJB equation \eqref{HJB:exemple} and using a verification result we can solve completely the problem of the Principal \eqref{pb:mckeanvlasov}. This lead us to our main result
\begin{Theorem}\label{thm:optimal:ex} The optimal contract for the problem of the Principal is
\begin{align*}
\xi^\star:=&\ 
\delta +\beta_1(1+\beta_2) \int_0^T e^{(\alpha+\beta_1)(T-t)} X_t dt+(1+\beta_2)\left(X_T-e^{(\alpha+\beta_1) T}X_0\right)
\end{align*}
for some constant $\delta$ explicitly given by \eqref{delta_optimal} in the proof. Besides, the contract $\xi^*$ is a Normal random variable and 
 $$
 \xi^* \; \sim \; \Nc\left( R_0 +  \frac{(1+\beta_2)^\frac{n}{n-1}}{n c} \left(e^{\frac{n}{n-1}(\alpha+\beta_1)(T)} -1\right)  \;;\;  \sigma^2 (1+\beta^2)^2 \left(e^{2(\alpha+\beta_1)T}-1\right) \right) \;.
 $$
The associated optimal effort of the Agent is deterministic and given by
$$a^\star_u:=(1+\beta_2)^{\frac{1}{n-1}}\left( \frac{e^{(\alpha+\beta_1) (T-u)}}{c}\right)^{\frac{1}{n-1}},\, u\in [0,T],$$
{\color{black}and the value function of the Principal (at least when $\alpha>0$ and $\alpha\neq\beta_!$, see the proof for the other cases) is
\begin{align*}
v(t,\nu)=&\ \frac{(1+\beta_2)^{\frac{n}{n-1}}}{\kappa c^\frac{1}{n-1}} \left(1-\frac1n \right)^2\big(e^{\kappa\frac{n}{n-1}(T-t)}-1\big)+ \int_\mathbb R x \nu^1(dx) e^{\kappa(T-t)}-\int_\mathbb R x \nu^2(dx)\\
&-\frac{\gamma}{2\alpha-\kappa}\big(e^{2\alpha (T-t)}-e^{\kappa (T-t)}\big) \left(\int_\R |x|^2 \nu(dx)-\left(\int_\R x \nu(dx)\right)^2+\frac{\sigma^2}{2\alpha}\right)-\frac{\gamma\sigma^2}{2\alpha \kappa}\big(1-e^{\kappa (T-t)}\big).
\end{align*}
} 
\end{Theorem}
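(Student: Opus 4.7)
My plan is to solve the HJB equation \eqref{PDE:PW:dependancyq} associated with the Principal's mean-field control problem via an explicit polynomial-in-measure ansatz, extract the optimal contract from the forward form of the BSDE representation \eqref{decomposition:uaxi}, and conclude via the verification result of Theorem \ref{thm:verif:depend} combined with the admissibility discussion of Section \ref{section:admissibility}. The structural simplifications of this example -- $u_A\equiv 0$, $k\equiv 0$, $U_A=\mathrm{Id}$, linear Principal utility -- force two immediate reductions: the Hamiltonian depends on $\chi$ only through the linear contribution $-\chi$, so $\chi^\star\equiv 0$; and monotonicity of the Principal's objective in $Y_0$ binds the participation constraint at $Y_0=R_0$.

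\textbf{Ansatz and HJB computation.} The guiding heuristic is that under a state-independent $\widetilde Z$ the output $X$ remains Gaussian, which motivates the ansatz
\[v(t,\nu)=A(t)+B(t)\!\int\! x^1\,\nu(dx)+C(t)\!\int\! x^2\,\nu(dx)+D(t)\Big(\!\int\!(x^1)^2\,\nu(dx)-\Big(\!\int\! x^1\,\nu(dx)\Big)^{\!2}\Big),\]
with terminal conditions $A(T)=0,\ B(T)=1,\ C(T)=-1,\ D(T)=0$ dictated by $v(T,\nu)=\int G\,d\nu$. A direct computation of the Lions derivatives gives $\partial_\rho v(t,\nu)(x)=\big(B(t)+2D(t)(x^1-\bar x^1),\,C(t)\big)^{\!\top}$ and $\partial_x\partial_\rho v(t,\nu)(x)$ concentrated on its $(1,1)$-entry equal to $2D(t)$, so that $\tfrac12\mathrm{Tr}[\partial_x\partial_\rho v\,SS^\top]$ integrates to $D(t)\sigma^2$ independently of $\widetilde z$. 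Restricting the inner maximisation to state-independent admissible controls and carefully handling the mean-field coupling produced by $\beta_2$, the first order condition yields $\widetilde z^\star_t=B(t)(1+\beta_2)$, so the announced $Z^\star_t=(1+\beta_2)e^{\kappa(T-t)}$ and the deterministic $a^\star_t=(1+\beta_2)^{1/(n-1)}(e^{\kappa(T-t)}/c)^{1/(n-1)}$ both fall out. Matching coefficients in the HJB then yields the decoupled linear system
\[B'+\kappa B=0,\quad C'=0,\quad D'+2\alpha D=\gamma B,\quad -A'=\tfrac{n-1}{n}c^{-1/(n-1)}(B(1+\beta_2))^{\!\tfrac{n}{n-1}}+D\sigma^2,\]
which I would integrate explicitly through the integrating factor $e^{2\alpha t}$ for $D$ and a quadrature for $A$, recovering the stated closed-form $v$; the borderline regimes $\alpha=0$, $2\alpha=\kappa$, $\kappa=0$ are handled by the obvious limits in the integrating factor.

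\textbf{Recovery of $\xi^\star$ and its law.} Plugging $Z^\star$ into the forward form of \eqref{decomposition:uaxi} with $Y_0=R_0$ and $U_A=\mathrm{Id}$ gives $\xi^\star=R_0-\int_0^T g^\star(t,X,Z^\star_t,\mu,q_t)\,dt+\int_0^T Z^\star_t\sigma\,dW_t$, and since $Z^\star$ and $a^\star$ are deterministic the integrand $g^\star$ decomposes into a deterministic piece and $Z^\star_t\alpha X_t$. Applying Itô under $\mathbb P$ to $t\mapsto e^{\kappa(T-t)}X_t$ (noting that $dX_t=\sigma\,dW_t$ under $\mathbb P$ by Assumption $(\boldsymbol\sigma)$) produces the key identity
\[\sigma\!\int_0^T\! e^{\kappa(T-t)}\,dW_t=X_T-e^{\kappa T}X_0+\kappa\!\int_0^T\! e^{\kappa(T-t)}X_t\,dt;\]
substituting back, the combination $-\int_0^T Z^\star_t\alpha X_t\,dt+(1+\beta_2)\kappa\int_0^T e^{\kappa(T-t)}X_t\,dt$ collapses to $\beta_1(1+\beta_2)\int_0^T e^{\kappa(T-t)}X_t\,dt$ via $\kappa=\alpha+\beta_1$, producing exactly the stated expression for $\xi^\star$ with all remaining deterministic pieces aggregated into $\delta$. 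Normality of $\xi^\star$ is then immediate from Gaussianity of $X$ under $\mathbb P^{\mu,q,\alpha^\star}$: its mean follows from the binding participation constraint $\mathbb E^{\mathbb P^{\mu,q,\alpha^\star}}[\xi^\star]=R_0+\int_0^T c|a^\star_t|^n/n\,dt$, and its variance from Itô's isometry applied to $(1+\beta_2)\sigma\int_0^T e^{\kappa(T-t)}\,dW_t$.

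\textbf{Admissibility, verification, and main obstacle.} Admissibility $\xi^\star\in\Xi$ follows from Section \ref{section:admissibility}: since $Z^\star$ is bounded and purely deterministic in time, the McKean--Vlasov system \eqref{systemeEDS} decouples into a classical linear SDE for $X$ with deterministic mean-field interaction, which admits a unique solution by the standard results cited there, so Corollary \ref{corollary:xi} applies. The smoothness of the ansatz $v$ then lets me invoke Theorem \ref{thm:verif:depend} with the trivially Lipschitz maximiser $\widetilde Z^\star(t,x,\rho)\equiv (1+\beta_2)e^{\kappa(T-t)}$ to close the optimality argument. The most delicate point, which I expect to be the main obstacle, is justifying the restriction of the inner Hamiltonian maximisation to state-independent controls: the naive pointwise first-order condition applied to the ansatz also admits $\widetilde z(x)=B(1+\beta_2)+2D(x^1-\bar x^1)$, but plugging this affine-in-$x$ choice back into the agent's best response produces an $a^\star$ whose interplay with the Gaussian tails of $X$ would typically violate the exponential moment requirement built into the set $\mathcal A$ of admissible actions; confirming that these integrability constraints indeed rule out the state-dependent maximiser and leave the deterministic $\widetilde z^\star$ as the genuine inner optimiser is the technical crux before the verification can be applied.
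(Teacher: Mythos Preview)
Your global architecture---polynomial-in-moments ansatz for $v$, derive ODEs for $A,B,C,D$, extract $Z^\star$ and $a^\star$, recover $\xi^\star$ from the forward form of the Agent's BSDE, then close via Theorem~\ref{thm:verif:depend} and the admissibility discussion of Section~\ref{section:admissibility}---is exactly the paper's route. The paper organises it slightly differently: it first \emph{computes} the restricted value $\underline U^P(t,\nu)$ by solving the Principal's problem over deterministic $z\in\mathcal D$ (with deterministic $z$ the law of $X_t$ evolves through explicit ODEs for its mean and variance, so this is a calculus exercise), and only afterwards checks that this $\underline U^P$ is a smooth solution of the full HJB~\eqref{HJB:exemple}. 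But the resulting candidate coincides with your ansatz, and your recovery of $\xi^\star$ and its Gaussian law via It\^o applied to $t\mapsto e^{\kappa(T-t)}X_t$ is exactly the computation carried out in the paper's Step~3.

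Where you and the paper genuinely diverge is in the justification that the supremum in the HJB is attained at a state-independent $\tilde z$. The paper does \emph{not} argue via the exponential-moment constraints of $\mathcal A$ or $\tilde{\mathcal Z}_p$. Instead, having the explicit candidate $\underline U^P$ in hand, it displays the integrated generator $\int\widetilde{\mathbb L}^{\tilde z}\underline U^P(t,\nu)(x)\,\nu(dx)$ for a generic Lipschitz feedback $\tilde z$ and asserts that its $\tilde z$-dependent part takes the separated form
\[
\int_{\mathbb R}\Big(\big(|\tilde z(t,x,\nu)|/c\big)^{\frac{1}{n-1}}(1+\beta_2)e^{\kappa(T-t)}-\tfrac{1}{n}c^{-\frac{1}{n-1}}|\tilde z(t,x,\nu)|^{\frac{n}{n-1}}\Big)\nu(dx),
\]
whose integrand is a function of $\tilde z(x)$ alone with a state-independent pointwise maximiser $z^\star_t=(1+\beta_2)e^{\kappa(T-t)}$. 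With this reduction the supremum over all $\tilde z\in\tilde{\mathcal Z}_p$ is automatically attained at the constant $z^\star$, the HJB equality holds, and Theorem~\ref{thm:verif:depend} applies with no further restriction.

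Your proposed mechanism---the affine candidate $\tilde z(x)=B(1+\beta_2)+2D(x^1-\bar x^1)$ violates the exponential integrability built into $\mathcal A$---is a different argument and, as stated, does not close the verification. Even if that extremal feedback is inadmissible, Theorem~\ref{thm:verif:depend} still needs $-\partial_t v-\int\widetilde{\mathbb L}^{\tilde z}v\,d\nu\ge 0$ for \emph{every} $\tilde z\in\tilde{\mathcal Z}_p$, including mild bounded state-dependent perturbations of $z^\star$ that certainly stay admissible. So if you want to follow the paper, the step to reproduce is the algebraic reduction of the integrated Hamiltonian above (i.e.\ checking that, for the specific candidate $\underline U^P$, the cross term you flag does not survive in the $\tilde z$-dependent part once everything is integrated against $\nu$), rather than an admissibility cut-off.
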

\begin{sproof} \rm We provide here the main step of the proof and we refer to the Appendix for the details.
\textcolor{black}{\begin{itemize}
\item {\color{black}In a preliminary step, we provide an empirical investigation of the Principal problem by restricting our study to deterministic control $z$. The intuition for this restriction is that the classical model of Holmstr\"om and Milgrom leads to a deterministic $z$. More precisely, we compute explicitly the optimal continuation utility of the Principal at any time $t$ given a measure $\nu$, when he is restricted to choose a deterministic $z$, since this is a simple exercise of analysis. We denote the corresponding value by $\underline U^P(t,\nu)$, and use it as our Ansatz.}
\item In the first step, we prove that $\underline U^P$ is a smooth solution to the associated PDE \eqref{HJB:exemple} in our particular example.
\item In a second step, we use the verification result given by Theorem \ref{thm:verif:depend} to prove that $\underline U^P$ is indeed the value function of the Principal problem without any restriction, so that the Ansatz holds.
\item In the last step, we compute the optimal contract and we verify the admissibility of the revealed optimal control. 
\end{itemize}}
\end{sproof}

\subsubsection{Economic interpretations}
We focus in this section on the impact of different parameters of the model on the designed contract, the optimal effort of the Agent and the value function of the Principal. Since the optimal effort is  deterministic, $X$ and $\xi^\star$ are Gaussian, and we only need to study their expectation and variance. The salary $\xi^\star$ always decomposes into a fixed part denoted $\delta$ and a (possibly negative) variable part, indexed on the output process $X$. The one by one sensitivities to the different parameters of the model are summed up in the next tabular.

 \begin{center}
 \begin{tabular}{||c||c|c|c|c|c|c|c||}
 \hline\hline 
 & $c$& $\alpha$ & $\beta_1$ & $\beta_2$ & $\gamma$ & $\frac{\beta_1}{\alpha+\beta_1}$ & $\frac{\beta_2}{1+\beta_2}$\\ \hline\hline 
  Expectation of $\xi^*$ & $\searrow$ & $\nearrow$ & $\nearrow$  & $\nearrow$ & $=$ & $=$ & $=$ \\ \hline
  Variance of $\xi^*$ & $=$& $\nearrow$ &  $\nearrow$ & $\nearrow$ & $=$ & $=$ & $=$ \\ \hline
  Fixed salary part $\delta$ & $\searrow$ & $\searrow$ & $\searrow$ & $\searrow$ & $\nearrow$ & $\searrow$ & $=$ \\ \hline\hline
  Optimal effort of the Agent  & $\searrow$ & $\nearrow$ & $\nearrow$ & $\nearrow$ & $=$ & $=$ & $=$ \\ \hline
  Expected gain of the Principal & $\searrow$ & $\nearrow$ & $\nearrow$ & $\nearrow$ & $\searrow$ & $\nearrow$ & $=$ \\ \hline
  Variance of the terminal  gain of the Principal & $=$ & $=$ & $\nearrow$ & $\nearrow$ & $=$ & $=$ & $=$ \\ \hline
 \end{tabular}
 \end{center}

 The sensitivities of the fixed salary part and the expected gain of the Principal  with respect to a parameter, are computed when the other parameters are equal to $0$.  The last two columns of the tabular have to be understood as follows: the parameters $\alpha$ and $\beta_1$ are both playing the roles of {\it boosters} of the health of the project, and we present in the last column the impact of parameter $\beta_1$ whenever $\alpha+\beta_1$ is constant. This boils down to understanding the effect of balancing the boosting effect in the dynamics between the value of the project $X$, and the average value of the projects of the company. Similarly, the last column studies the consequences of partly replacing the effect of effort $a^\star$ by the average amount of effort $\E^\star[a^\star]$ produced in the company.
 
 \vspace{0.5em}
Let us now detail further the economic interpretation of the impact of these parameters, by focusing on the optimal effort of the Agent, the shape of the optimal contract, and the value of the game for the Principal. 
  
\paragraph{Optimal effort of the Agent}
\begin{itemize}[leftmargin=*]
\item[$(i)$] Let us first notice that the parameters $\alpha$ and $\beta_1$ play exactly the same role for the optimal effort $a^\star$, since they intervene only through their sum $\alpha+\beta_1$. Moreover, we observe that $a^\star$ is non--decreasing with respect to these two parameters. \textcolor{black}{Indeed, a higher increasing rate for each project value implies that the Agents are more eager to make efforts, which will produce more benefits in the future. This is mainly due to the fact that $\alpha$ and $\beta_1$ play the roles of the {\it boosters} of the value of each  project.} 

\item[$(ii)$] Similarly, $a^\star$ is increasing with respect to $\beta_2$. This is more surprising, since $\beta_2$ somehow measures the gain, in terms of the associated increase of the drift of $X$, that the Agent gets from the efforts of the other Agents. Hence, one could expect that the Agent may decide to work less and let the other ones do all the work for him. Therefore, at the optimum, the Principal gives sufficiently good incentives to the Agents so that at the equilibrium they select, they do not adopt any free--rider type behaviour. \textcolor{black}{{\it Ex post}, since the optimal efforts are deterministic and the Agents are identical, we observe that replacing the impact on the project dynamics of one Agent effort by the average of all efforts does not modify in any way the solution to the problem. }

\item[$(iii)$] \textcolor{black}{Notice that $a^\star$ decreases with the time $t$ (or equivalently increases with the time left until maturity $(T-t)$, whenever $\alpha+\beta_1 >0$. This was to be expected as the boosting effect (due to $\alpha$ and $\beta_1$) on the project value dynamics incentivize Agents to make more efforts at the beginning of the project and take advantage of the benefits automatically produced in the future.} 

\item[$(iv)$] As expected, $a^\star$ decreases with the cost parameter $c$: the more costly it is for the Agent to work, the more difficult it is going to be for the Principal to give him incentives to work.

\item[$(v)$] Quite surprisingly, both the volatility of the project $\sigma$, as well as the volatility penalisation, through the parameter $\gamma$, have no impact on $a^\star$. This is of course an unrealistic conclusion of the model, which is, according to us, a simple artefact of the fact that both the Principal and the Agent are risk--neutral here. As can be inferred from the general PDEs derived in Section \ref{section:PW}, this effect should generally disappear for more general situations.
\end{itemize}


\paragraph{Optimal contract $\xi^*$}
\begin{itemize}[leftmargin=*]
\item[$(i)$] The optimal contract always divides into a fixed salary $\delta$, and a random part indexed on the performances of the project $X$, corresponding to the difference between the value of the project at time $T$, and the capitalized initial value of the project with rate $\alpha+\beta_1$ (which is the rate generated by the project itself). Whenever $\beta_1=0$ and $\beta_2=0$, the Principal always transfers the totality of the project to the Agent and he does not keep any risk. This feature is classical since the Agent is risk--neutral. Indeed the same conclusions hold in Holmstr\"om and Milgrom's model \cite{holmstrom1987aggregation}.

\item[$(ii)$] Whenever $\beta_2>0$ and $\beta_1=0$, the Principal transfers to the Agents a fraction of the company, which happens to be greater than $1$, and hereby faces a random terminal gain. In order to provide proper incentives to the Agents, the Principal accepts to face a random terminal payment which surprisingly decreases with the value of the company, but that he can compensate via a reduction of the fixed salary part $\delta$. \textcolor{black}{This feature disappears in the more realistic framework where each project is impacted by a convex combination of both the effort of the representative Agent and the average efforts of all the other ones. Mathematically this corresponds to taking $\beta_2\in(0,1)$ and considering the following drift for each project:
 $$ b:(s,x,\mu,q,a)\longmapsto (1-\beta_2) a +\alpha x +\beta_2 \int_\R z dq_s(z) -\gamma V_\mu(s). $$}

\item[$(iii)$] If $\beta_1>0$, it is worth noticing that the optimal contract $\xi^\star$ is not Markovian anymore. The contract is indexed on all the values of the project $X$ over time, with a growing importance as time approaches maturity. This effect is due to the impact of the other Agents project on the dynamics of the representative one, and would not appear using the parameter $\alpha$ only.

\item[$(iv)$] The fixed salary $\delta$ is decreasing with respect to $\alpha,$ $\beta_1$ and $\beta_2$. Recall that these parameters have to be seen as boosters for the project. This reduction is compensated by the more advantageous dynamics of the project $X$, so that the average of the terminal payment $\xi^\star$ is in fact increasing with these parameters.

\item[$(v)$] On the other hand, the fixed part $\delta$ of the optimal contract, as well as the average of the terminal payment $\xi^\star$ increase with the variance penalization $\gamma$. The Principal increases the fixed part of the salary in order to compensate the negative effect on the dynamics of the project of the dispersion of the results of all the projects of the company. 


\end{itemize}

\paragraph{Value of the game for the Principal}
\begin{itemize}[leftmargin=*]
\item[$(i)$] As in the classical Principal-unique agent model, all the Agents  always obtain here their reservation utility. In our model, the interaction between the Agents does not create a situation where the Principal should provide them with a higher utility.
\item[$(ii)$] Since the Principal is risk-neutral, the value of the game for him is the expectation of his terminal gain. It is increasing with respect to the boosters $\alpha, \beta_1,\beta_2$ of the project, and is decreasing with the variance penalization $\gamma$, which is of course quite natural. 
\item[$(iii)$] From the viewpoint of the Principal, it is more interesting to have project dynamics, which are boosted through the average level of the projects, instead of the level of the project itself. The averaging effect over the level of the different projects provide a more secure dynamics for each project of the company. On the other hand, interchanging the boosting effect provided by the effort of the representative Agent, with the one provided by the average effort of all Agents does not provide any extra value for the Principal.
\end{itemize}

\subsection{Extension to risk averse Principal}

In all this section, we assume for the sake of simplicity that $\alpha> 0$. We now investigate a model in which the payoff of the Principal is penalised by the covariance between the project and the salary given to any Agent. More precisely, the Principal has to solve for some positive constants $\lambda_{X},\lambda_{\xi},\lambda_{X\xi}$
\begin{equation}\label{pb:mckeanvlasov:variance}
 U_0^P(Y_0)= \sup_{(\chi,Z)\in \mathcal X\times\mathcal Z(\chi)} \, \mathbb E^{\P^{\star}}\big[X_T-\xi\big]-\lambda_X \text{Var}_{\mathbb P^\star}(X_T)- \lambda_\xi \text{Var}_{\mathbb P^\star}(\xi)-\lambda_{X\xi} \text{Var}_{\mathbb P^\star}(X_T-\xi),
\end{equation}
where $\text{Var}_{\mathbb P^\star}$ denotes the variance under $\mathbb P^\star$. We thus have the following theorem, and we refer to the appendix for its proof,
\begin{Theorem}\label{thm:extension} Let $z^\star\in \mathcal D$ be the unique maximiser of
$$z\in \mathbb R\longmapsto h(u,z):=\bigg((1+\beta_2)\left(\frac{\left|z\right|}{c}\right)^{\frac{1}{n-1}}e^{\kappa(T-u)}-\frac{\left|z\right|^{\frac{n}{n-1}}}{c^{\frac{1}{n-1}}n}-(\lambda_\xi+\lambda_{X\xi}) \sigma^2 |z|^2+2\lambda_{X\xi} \sigma^2 z e^{\alpha(T-u)}\bigg),$$
for any $u\in [0,T]$. The optimal contract for the problem of the Principal is
\begin{align*}
\xi^\star&:= \tilde{\delta}(z^\star)-\alpha \int_0^T z^\star_t X_t dt+\int_0^T z^\star_t dX_t.
\end{align*}
for some explicit constant $\tilde{\delta}(z^\star)$, depending on $z^\star$ and the associated optimal effort of the Agent is 
$$a^\star_u:=\left(\frac{z^\star_u}{c}\right)^{\frac{1}{n-1}},\, u\in [0,T].$$
\end{Theorem}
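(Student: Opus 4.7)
The plan is to follow the same template as the proof of Theorem \ref{thm:optimal:ex}: first reduce the Principal's problem to a one-dimensional pointwise maximisation by restricting to deterministic $Z\in\Dc$, solve it explicitly to obtain the candidate $z^\star$ and the contract $\xi^\star$, and then invoke a verification argument in the spirit of Theorem \ref{thm:verif:depend} to show that the deterministic ansatz is indeed optimal among all admissible controls $(\chi,Z)\in\mathcal X\times\mathcal Z(\chi)$.

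First, restrict to $z\in\mathcal D$. By Theorem \ref{thm:agent} the Agent's optimal effort is then $a^\star_t=(z_t/c)^{1/(n-1)}$, which is deterministic; consequently $\mathbb E^{\P^\star}[a^\star_t]=a^\star_t$ and the drift of $X$ becomes affine in $X$ under $\P^\star$. Writing $\kappa:=\alpha+\beta_1$, the first and second moments $m_t:=\E^{\P^\star}[X_t]$, $v_t:=\text{Var}_{\P^\star}(X_t)$ solve the decoupled ODEs $m_t'=(1+\beta_2)a^\star_t+\kappa m_t-\gamma v_t$ and $v_t'=2\alpha v_t+\sigma^2$, yielding $v_t=\sigma^2(e^{2\alpha t}-1)/(2\alpha)$ and an explicit $m_T$. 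Saturating the participation constraint $Y_0=R_0$ and using the decomposition of $\xi$ coming from \eqref{decomposition:uaxi} (with $u_A=k=0$),
\begin{equation*}
\xi=R_0+\int_0^T c|a^\star_t|^n/n\, dt+\int_0^T z_t\sigma\,dW^\star_t,
\end{equation*}
one computes directly $\E^{\P^\star}[\xi]$, $\text{Var}_{\P^\star}(\xi)=\sigma^2\int_0^T z_t^2dt$, and, using $X_T-\E^{\P^\star}[X_T]=\sigma\int_0^T e^{\alpha(T-s)}dW^\star_s$, the covariance $\text{Cov}_{\P^\star}(X_T,\xi)=\sigma^2\int_0^T z_te^{\alpha(T-s)}ds$. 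Substituting into \eqref{pb:mckeanvlasov:variance} makes all the $z$-dependent terms assemble into a single additive functional
\begin{equation*}
\text{(constants in $z$)}+\int_0^T h(u,z_u)\,du,
\end{equation*}
with $h$ as in the statement. Strict concavity of $h(u,\cdot)$, which follows from $n>1$ and $\lambda_\xi+\lambda_{X\xi}>0$, yields a unique measurable maximiser $z^\star\in\mathcal D$.

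To obtain the claimed form of $\xi^\star$, one starts from the stochastic decomposition above and rewrites $\int_0^T z^\star_t\sigma\,dW^\star_t$ in terms of the observable $X$ using $dX_t=b(t,X,\mu^\star,q^\star_t,a^\star_t)dt+\sigma dW^\star_t$ with $b$ affine in $X_t$ and $a^\star_t$, $m_t$, $v_t$ deterministic. Isolating the term $\alpha z^\star_t X_t$, every other $X$-independent piece collapses into a constant, giving
\begin{equation*}
\xi^\star=\tilde\delta(z^\star)-\alpha\int_0^T z^\star_tX_t\,dt+\int_0^T z^\star_t\,dX_t,
\end{equation*}
with $\tilde\delta(z^\star):=R_0+\int_0^T c|a^\star_t|^n/n\,dt-\int_0^T z^\star_t\big((1+\beta_2)a^\star_t+\beta_1 m_t-\gamma v_t\big)dt$.

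The main obstacle is the verification step, which is more delicate here than in Theorem \ref{thm:optimal:ex} because the criterion \eqref{pb:mckeanvlasov:variance} is not of expected-integral form: the variance and covariance terms depend on the joint law $\mathcal L(X_T,\xi)$ and make the problem time-inconsistent. I would handle this exactly as in Section \ref{section:mckean}: view variances as functionals of the law of the two-dimensional state $M^{\chi,Z}=(X,Y^{Y_0,Z})^\top$ in \eqref{sde:M}, so that the Principal's problem still fits the McKean--Vlasov control framework and admits an HJB equation analogous to \eqref{PDE:PW:dependancyq} but with an additional terminal term encoding the variances. Using $z^\star$, one builds an explicit candidate value function $\underline U^P(t,\nu)$ by running the deterministic computation above over $[t,T]$ and verifies by direct differentiation that it is a classical solution of this HJB equation. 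Applying the appropriate analogue of Theorem \ref{thm:verif:depend} (or a direct comparison argument using Itô's formula on $\mathcal P_2(\R^2)$, together with the concavity of $h$ in $z$ implying that the deterministic $z^\star$ maximises the Hamiltonian pointwise) then yields optimality among all $(\chi,Z)\in\mathcal X\times\mathcal Z(\chi)$. Admissibility of $(\chi,z^\star,\xi^\star)$ follows as in Section \ref{section:admissibility}, since $z^\star$ is bounded and deterministic, which trivially makes the forward system \eqref{systemeEDS} linear with Gaussian solution, hence solvable.
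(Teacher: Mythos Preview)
Your proposal follows essentially the same route as the paper's own proof: restrict to deterministic $z$, compute the moments and covariances of $(X_T,\xi)$ explicitly, recognise that the criterion reduces to $\int_0^T h(u,z_u)\,du$ plus terms independent of $z$, build the candidate $\underline U^P(t,\nu)$, and verify that it solves the HJB equation \eqref{PDE:PW:dependancyq} with the terminal condition modified so as to encode the variance and covariance terms of \eqref{pb:mckeanvlasov:variance}. This is exactly what the paper does (its proof is in fact more terse than yours, simply pointing back to the steps of Theorem \ref{thm:optimal:ex} and writing down the new terminal condition \eqref{HJB:exemple:variance}).

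One point deserves care: your justification of uniqueness of $z^\star$ via \emph{strict concavity} of $h(u,\cdot)$ is not correct for all $n>1$. For $1<n<2$ the exponent $1/(n-1)$ exceeds $1$, so the first term $(1+\beta_2)(|z|/c)^{1/(n-1)}e^{\kappa(T-u)}$ is convex on $\{z>0\}$, and the sum of this with the strictly concave quadratic $-(\lambda_\xi+\lambda_{X\xi})\sigma^2 z^2$ need not be concave near the origin. The paper handles this differently: existence of a maximiser comes from coercivity of $h$ (the dominant term at infinity is $-(\lambda_\xi+\lambda_{X\xi})\sigma^2 z^2$, or $-|z|^{n/(n-1)}$ whose exponent exceeds $1/(n-1)$), while uniqueness is obtained by a direct computation on the first-order condition rather than by a concavity argument. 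Apart from this technical detail, your outline matches the paper's proof.
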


Except in very special cases, the maximiser of the map $h$ above cannot be computed explicitly. The case $n=2$ is an exception, on which we now concentrate.
\paragraph{Particular case of quadratic cost}

\vspace{0.5em}
By considering the classical quadratic cost function $c(a):=c\frac{|a|^2}{2},\; c>0$ for the Agent, one gets
$$a^\star_t= \frac{1+\beta_2}{c\left(1+2(\lambda_\xi+\lambda_{X\xi}) c\sigma^2\right)} e^{(\alpha+\beta_1)(T-t)}+\frac{2\lambda_{X\xi}\sigma^2 }{1+2(\lambda_\xi+\lambda_{X\xi}) c\sigma^2} e^{\alpha(T-t)}$$
with optimal contract

\begin{align*}
\xi^\star&=C+\beta_1\frac{1+\beta_2}{1+2(\lambda_\xi+\lambda_{X\xi}) c\sigma^2} \int_0^T e^{\kappa(T-t)} X_t dt+\frac{1+\beta_2+2c\lambda_{X\xi} \sigma^2}{1+2(\lambda_\xi+\lambda_{X\xi}) c \sigma^2} X_T,
\end{align*}
where $C$ is an explicit constant. Besides, 
 $$
 \xi^* \; \sim \; \Nc\left( R_0 +\frac c2  \int_0^T |a^\star_t|^2dt \;;\;   \int_0^Tc^2 \sigma^2 |a^\star_t|^2dt  \right) \;.
 $$

The sensitivities to the parameters $\lambda_X,\lambda_\xi,\lambda_{X\xi}$ of the model are summed up in the next tabular.

 \begin{center}
 \begin{tabular}{||c||c|c|c|c|c|c|c||}
 \hline\hline 
 & $\lambda_X$& $\lambda_{\xi}$ & $\lambda_{X\xi}$ \\ \hline\hline 
   Optimal effort of the Agent  & $=$ & $\searrow$ & $\searrow$  \\ \hline
  Expectation of $\xi^*$ & $=$ & $\searrow$ & $\searrow$  \\ \hline
  Variance of $\xi^*$ & $=$& $\searrow$ &  $\searrow$ \\ \hline
 \end{tabular}
 \end{center}

\paragraph{Economic interpretation}

\begin{itemize}[leftmargin=*]
\item[$(i)$] The optimal effort of the agent is decreasing with the penalisation of both the variance of $\xi$ and the one of $X_T-\xi$. Indeed, lowering the value of the projects allows to reduce the variance of the output. As a consequence, the optimal contract provides incentives for Agents to provide less efforts.

\item[$(ii)$] Focus now on the particular case where $\beta_1=\beta_2=0$, corresponding to the situation where there are no interactions between the project dynamics. If the Principal criterion is only penalized by $X_T-\xi$, we retrieve a similar solution as the one obtained in the previous study. Indeed, when both Principal and Agents are risk neutral, the optimal solution without penalisation already exhibits no variance, since the Agent keeps all the risk. Hence, it is also optimal for a Principal with mean-variance criterion. 


\item[$(iii)$] Since the contract and the output process are Gaussian processes, whenever the optimal effort is deterministic, considering a mean variance criterion boils down in fact to solving an exponential utility criterion problem. 


\item[$(iv)$] The penalisation with respect to the dispersion of the project values has absolutely no effect on the optimal effort or the optimal contract. This is due to the fact that the variance of the output is not impacted by a deterministic effort and hence by the optimal one. This feature  should disappear in less linear models where the optimal effort is not deterministic.
\end{itemize}


\subsection{Links with the solution of the $N-$players' problem}\label{section:lienNjoueurs}
In this section, we study the $N$-players model by following \cite{elie2016contracting} and, through an example, we aim at studying the behaviour of the optimal contract when $N$ goes to $+\infty$ in the model introduced in Section \ref{example:meanvar} with $\beta_2=\gamma=0$. For any $x\in\mathcal C^N:=\mathcal C([0,T];\mathbb R^N)$, let $\mu^N(x)\in \mathcal P(\mathcal C^N)$ be the empirical distribution of $x$ defined by
\begin{equation}\label{def:muN}
\mu^N(x)(dz^1,\dots,dz^N):= \frac1N \sum_{i=1}^N \delta_{x^{i}}(dz^i),
\end{equation}
where for any $y\in\mathcal C$, $\delta_y(dz)\in\mathcal P(\mathcal C)$ denotes the Dirac mass at $y$. Let $A\subset \mathbb R$, we consider a drift $b^N$ in the dynamic of the $\mathbb R^N$-valued output process $X^N$, which is Markovian and defined for any $(t,x,a)\in [0,T]\times\mathbb R^N\times A^N$ by
$$b^N\big(t, x,  \mu^N(x), a\big):= a+\alpha x +\beta_1  \int_{\R^N}w\mu^N(dw),$$
which can be rewritten (see the definition of $\mu^N$) $b^N(t,x, \mu^N(x),a)= a + B^N x,$
with  $B^N:=\alpha {\rm I}_{N} + \frac{\beta_1}N \mathbf 1_{N,N}. $
We assume that the volatility of the output is given by the $N\times N-$matrix $\Sigma:= \sigma {\rm I}_N,\, \sigma>0$ and the components of the initial condition $\psi^N$ are independent and identically distributed such that $\psi^{N,i}=\psi$ in law for any $i\in \{1,\dots,N\}$. More exactly, in this model, the dynamic of the output $X^N$ is given by 
\begin{equation}\label{SDE:Nplayers}
X^N_t=\psi^N+\int_0^t b^N\big(s, X^N_s,  \mu^N(X_{s}^N),  \alpha_s\big) ds+\int_0^t\Sigma dW^{N,{\alpha}}_s,\ t\in[0,T],\ \P-a.s.,
\end{equation}
%
\noindent 
where we defined a probability measure $\mathbb P_N^{\alpha}$ which is equivalent to $\mathbb P$ by
$$ \frac{d\mathbb P_N^{\alpha}}{d\mathbb P}:= \mathcal E\left( \int_0^T \Sigma^{-1} b^N\big(t, X^N_t, \mu^N(X^N_{t}),\alpha_t\big)\cdot dW^N_t \right),$$
and an $N-$dimensional $\P^\alpha_N-$Brownian motion $W^{N,{\alpha}}$ by
$$ W^{N,{\alpha}}_t:= W^N_t -\int_0^t \Sigma^{-1} b^N\big(s, X^N_s,  \mu^N(X_{s}^N), \alpha_s\big) ds.$$
Notice that in our model, the dynamic of the $i$th project managed by the $i$th Agent is directly impacted by both the projects of other Agents and their efforts, through their empirical distribution.\vspace{0.5em}

\vspace{0.5em}
\noindent We consider that any Agent is risk neutral ($U_A(x)=x$) and he is penalised through his effort by the quadratic cost function $c$ defined for any $a\in A$ by $c(a):=ca^2/2,\, c>0$. Finally, we consider only terminal payments and we assume that there is no discount factor ($k=0$). Thus, according to Theorem 4.1 in \cite{elie2016contracting} or Theorem 3.1 in \cite{mastrolia2017moral}, we deduce that in this framework, the optimal effort for the Agents is given by the $N-$dimensional vector $a^{N,\star}(z)$ defined for any $1\leq i\leq N$ and $z\in\Mc_N(\R)$ by $(a^{N,\star}(z))^i= \frac{z^{i,i}}c.$ We now define the map $\tilde g:\mathcal M_{N}(\R)\longrightarrow \R^N$ by
\begin{align*}
\tilde g(z)&:=\left(\frac1{2c} \left|z^{1,1}\right|^2,\dots, \frac1{2c} \left|z ^{N,N}\right|^2\right)^\top.
\end{align*} 
In this case, the Hamiltonian $G$ of the HJB equation associated with the problem of the Principal (see \cite{elie2016contracting, mastrolia2017moral}) is defined by for any $(t,x,y, p_x, p_y, \gamma_x, \gamma_y, \gamma_{xy})\in [0,T]\times (\R^N)^2\times (\R^N)^2\times (\mathcal M_{N}(\R))^3$
\begin{align*}
&G(t,x,y,p_x, p_y, \gamma_x, \gamma_y, \gamma_{xy})\\
&= \sup_{z\in  \mathcal M_{N}(\R)} \Bigg\{ a^\star(z)  \cdot p_x+\tilde g(z)\cdot p_y+\frac{\sigma^2}2\text{Tr}\left[ z^\top  z \gamma_y  \right]+\sigma^2\text{Tr}\left[ z\gamma_{xy} \right] \Bigg\}+ B^N x\cdot p_x+\frac{\sigma^2}2 \text{Tr}\left[   \gamma_x\right].
\end{align*}

Thus, HJB equation associated with the Principal problem is given by
\begin{equation}\label{PDE:Nplayers}
\begin{cases}
\displaystyle 
       -\partial_t v(t,x,y)-G(t,x,y,\partial_x v,\partial_y v,\partial_{xx}v,\partial_{yy} v,\partial_{xy} v)=0,\ (t,x,y)\in [0,T)\times \R^N\times \R^N,\\[0.8em]
     \displaystyle   v(T,x,y)= U_P\left(\frac{1}{N}\left(x-y\right)\cdot {\bf 1}_N\right),\, (x,y)\in \R^N\times \R^N,
           \end{cases}
\end{equation}

Since in Section \ref{section:ex} we actually consider a risk--neutral Principal, we decided, for the sake of simplicity and tractability to assume in this example that,
\begin{Assumption}
For the $N-$players' model, the Principal is risk neutral, \textit{i.e.} $U_P(x)=x,\; x\in \R$.
\end{Assumption}

We search now for a smooth solution to HJB equation \eqref{PDE:Nplayers} of the form $v(t,x,y)= f(t,x)- \frac1N y\cdot \mathbf 1_N$, with $\partial_{x_i x_j} f=0,\, i\neq j$. Such a solution requires that $f$ satisfies the following PDE
 \begin{equation}\label{PDE:Nplayers:exemplef}
\begin{cases}
\displaystyle 
       -\partial_t f- B^N x\cdot \partial_x f-\frac{\sigma^2}2 \text{Tr}\left[ \partial_{xx} f\right]- \sup_{z\in  \mathcal M_{N}(\R)} \bigg\{ a^\star(z)  \cdot \partial_x f-\frac1N \tilde g(z)\cdot \mathbf 1_N \bigg\} =0,\ (t,x)\in [0,T)\times \R^N,\\[0.8em]
     \displaystyle   f(T,x)= \frac1N x\cdot {\bf 1}_N,\, x\in \R^N.
           \end{cases}
\end{equation}
The optimal $z$ is given by $z^\star_N :=N{\rm diag}(\partial_x f) $. 
Using a Cole--Hopf transformation 
and then Feynman--Kac's formula, we get an explicit smooth solution to PDE \eqref{PDE:Nplayers:exemplef} given by
$$ f(t,x)= e^{(\alpha+\beta_1)(T-t)} \frac1N\sum_{i=1}^Nx^i+\frac{\gamma^\star \sigma^2}{4(\alpha+\beta_1)} \left( e^{2(\alpha+\beta_1)(T-t)}-1\right).$$
Recall now that the optimal $z$ for the HJB equation \eqref{PDE:Nplayers:exemplef} is given by $z^{N,\star}_t =N {\rm diag}(\partial_x f^N)(t,x) ,\, (t,x)\in [0,T]\times \R^N$. Hence,
\begin{equation}\label{zstarN}z_t^{N,\star}=\exp((\alpha+\beta_1) (T-t)){\rm I}_N,
\end{equation}
and the value function $ V^N$ of the Principal is
\begin{equation}
\label{valeurPrincipalNplayers}V^N(t,x,y)= e^{(\alpha+\beta_1)(T-t)}\frac1N \sum_{i=1}^Nx^i +\frac{\gamma^\star \sigma^2}{4(\alpha+\beta_1)} \left( e^{2(\alpha+\beta_1)(T-t)}-1\right)- \frac1N\sum_{i=1}^N y^i.
\end{equation}

\begin{Remark}\label{Rem:icomponent}
Notice that the component of $(z_t^{N,\star})^{i,i}$ are identical. Thus, the optimal effort does not depend on the Agent. This is relevant since any Agents are supposed to be identical. 
 \end{Remark}
We thus have from Proposition 4.1 in \cite{elie2016contracting} or Theorem 3.2 in \cite{mastrolia2017moral} together with Theorem \ref{thm:optimal:ex} the following theorem, whose the proof is postponed to the appendix,
\begin{Theorem}\label{thm:NplayerMF} Assume that for any $i=1,\dots,N$, $(\lambda_0^N)^i=\lambda_0$. We have the following two properties.
\begin{itemize}[leftmargin=*]
\item[$(i)$] The optimal effort $a^{N,\star}$ of Agents in the $N-$players' model is given by
$$a^{N,\star}_t=\frac{\exp((\alpha+\beta_1) (T-t))}c \mathbf 1_N ,$$
In particular, for any $i\in\{ 1,\dots,N\}$ we have $(a^{N,\star}_t)^i= a^\star_t,$ \textit{i.e.} the optimal effort of the $i$th Agent in the $N$ players model coincides with the optimal effort of the Agent in the mean--field model.
\item[$(ii)$] The optimal contract $\xi^{N,\star}$ proposed by the Principal is
$$\xi^{N,\star}:= R_0^N-\int_0^T \frac{\exp(2\kappa (T-t))}{2c}\mathbf 1_Ndt-\int_0^T e^{\kappa (T-t)}B_N X^N_tdt+\int_0^T e^{\kappa (T-t)}dX^{N}_t,
 $$
 and for any $i\in \{1,\dots,N \}$ we have
 $$ \mathbb P^{a^{N,\star}}_N\circ\left((\xi^{N,\star})^i\right)^{-1}\underset{N\rightarrow\infty}{\overset{\rm weakly}{\longrightarrow}} \P^{a^\star}\circ(\xi^\star)^{-1}.$$
\end{itemize}

\end{Theorem}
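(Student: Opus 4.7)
For part $(i)$, the computation is immediate: the explicit formula \eqref{zstarN} gives $(z^{N,\star}_t)^{i,i}=e^{(\alpha+\beta_1)(T-t)}$, and combined with the pointwise formula $(a^{N,\star}(z))^i=z^{i,i}/c$ obtained from the Hamiltonian maximisation, I get $(a^{N,\star}_t)^i = e^{\kappa(T-t)}/c = a^{\star}_t$. The formula for $\xi^{N,\star}$ then follows from Proposition 4.1 in \cite{elie2016contracting} (or Theorem 3.2 in \cite{mastrolia2017moral}): the optimal contract equals $Y^{N,\star}_T$ where $Y^{N,\star}$ is the forward SDE driven by the optimal $Z^{N,\star}={\rm diag}(z^{N,\star})$ and starting from $R_0^N$. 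Substituting the dynamics of $X^N$ from \eqref{SDE:Nplayers} into the Hamiltonian term $Z^{N,\star}_t b^N(\cdot)-\tilde g(Z^{N,\star}_t)$ and simplifying yields exactly the stated expression.

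For part $(ii)$, the main task is the weak convergence. Under $\mathbb P^{a^{N,\star}}_N$, since $a^{N,\star}$ is deterministic and the dynamics are linear affine, each coordinate satisfies
\begin{equation*}
X^{N,i}_t=\psi^i+\int_0^t\!\bigl(a^{\star}_s+\alpha X^{N,i}_s+\beta_1\bar X^N_s\bigr)ds+\sigma W^{N,\alpha,i}_t,\quad \bar X^N_s:=\tfrac1N\textstyle\sum_{j}X^{N,j}_s,
\end{equation*}
with $(\psi^i,W^{N,\alpha,i})_{1\le i\le N}$ i.i.d.\ by assumption. The plan is to invoke a classical propagation of chaos argument (as in Sznitman's theory, or directly via a Gronwall/coupling estimate since the interaction through $\bar X^N$ is Lipschitz and the coefficients are bounded): coupling $X^{N,i}$ with i.i.d.\ copies $\bar X^i$ of the solution to the limiting McKean--Vlasov SDE
\begin{equation*}
d\bar X_t=\bigl(a^{\star}_t+\alpha \bar X_t+\beta_1\mathbb E^{\P^{a^{\star}}}[\bar X_t]\bigr)dt+\sigma d\bar W_t,
\end{equation*}
driven by the same Brownian motion and initial condition, one obtains $\sup_{t\le T}\mathbb E\bigl[|X^{N,i}_t-\bar X^i_t|^2\bigr]\to 0$ at rate $1/N$. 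In particular $X^{N,1}\Rightarrow X$ on $\mathcal C$ and $\bar X^N_t\to \mathbb E^{\P^{a^\star}}[X_t]$ in $L^2$, uniformly in $t$.

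Next I rewrite $(\xi^{N,\star})^1$ using $(B_NX^N_t)^1=\alpha X^{N,1}_t+\beta_1\bar X^N_t$ so that
\begin{equation*}
(\xi^{N,\star})^1 = R_0-\!\int_0^T\!\!\tfrac{e^{2\kappa(T-t)}}{2c}dt-\!\int_0^T\!\! e^{\kappa(T-t)}\bigl(\alpha X^{N,1}_t+\beta_1\bar X^N_t\bigr)dt+\!\int_0^T\!\! e^{\kappa(T-t)}dX^{N,1}_t,
\end{equation*}
and, via integration by parts, transform the stochastic integral into $X^{N,1}_T-e^{\kappa T}\psi^1+\kappa\int_0^T e^{\kappa(T-t)}X^{N,1}_t dt$. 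Combining with the previous step, this expression converges in $L^2$ (hence in law) to
\begin{equation*}
R_0 - \int_0^T\!\!\tfrac{e^{2\kappa(T-t)}}{2c}dt+\beta_1\!\int_0^T\!\! e^{\kappa(T-t)}(X_t-\mathbb E[X_t])dt+X_T-e^{\kappa T}\psi,
\end{equation*}
and a direct calculation using Theorem \ref{thm:optimal:ex} with $\beta_2=\gamma=0$ and the expression for $\delta$ identifies this limit with $\xi^\star$ in law, which gives the claim.

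\textbf{Main obstacle.} The delicate point is rigorously passing to the limit in the stochastic integral $\int_0^Te^{\kappa(T-t)}dX^{N,1}_t$; this is why I convert it to a pathwise expression via It\^o's formula before applying the continuous mapping theorem. All other terms are continuous functionals of the path of $(X^{N,1},\bar X^N)$ in the uniform topology, so the strong propagation of chaos estimate combined with uniform integrability (guaranteed by the Gaussian nature of each $X^{N,i}$ with deterministic drift bounded uniformly in $N$) suffices to conclude.
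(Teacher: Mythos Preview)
Your proof is correct, but it proceeds along a genuinely different route than the paper. For part $(ii)$, the paper exploits the explicit linear structure: it solves the SDE for the empirical mean $N^{-1}S^N_t$ in closed form as an Ornstein--Uhlenbeck process, substitutes this back so that each $X^{N,i}$ is itself an OU process with parameters $(\alpha,\theta^N_t,\sigma^N_t)$ depending on $N$ only through $\frac1N\sum_i\psi^{N,i}$ and a vanishing stochastic term, and then reads off the weak convergence of each piece by inspection. You instead invoke the standard Sznitman--type propagation of chaos coupling to get $L^2$--convergence of $(X^{N,1},\bar X^N)$ to $(X,\E^{\P^{a^\star}}[X])$, then apply It\^o's product rule to $e^{\kappa(T-t)}X^{N,1}_t$ so that $(\xi^{N,\star})^1$ becomes a continuous functional of the path, and conclude via continuous mapping.

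Your route is arguably more robust (it does not rely on the explicit solvability of the OU mean) and the integration by parts trick you isolate as the ``main obstacle'' is exactly the right device to avoid passing to the limit directly in the stochastic integral. The paper's approach, on the other hand, is more self--contained for this particular example, requiring no external propagation of chaos statement. Both lead to the same identified limit, and your final algebraic check that $R_0-\int_0^T \tfrac{e^{2\kappa(T-t)}}{2c}dt+\beta_1\int_0^T e^{\kappa(T-t)}(X_t-\E[X_t])dt+X_T-e^{\kappa T}\psi$ coincides with $\xi^\star$ when $\beta_2=\gamma=0$ is correct (it is precisely the integration by parts of the expression for $\xi^\star$ in the proof of Theorem \ref{thm:optimal:ex}).
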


\begin{Remark}
In our very particular example, proving the convergence of the value functions associated with the $N-$players' model and the mean field model is direct, since we can compute explicitly all the relevant quantities. In a more general case, and in view of the comprehensive investigation carried out in {\rm \cite{cardaliaguet2015master}} $($see more precisely Theorem $2.13)$, one can reasonably expect to get the convergence of the value functions of both the Principal and the Agents, and therefore the convergence of the optimal contracts $($seen as the terminal values of the continuation utilities of the Agents$)$ of the $N-$players' model to the mean field model, when $N$ goes to $+\infty$. However, the convergence of the optimal efforts in the Nash equilibria of the $N-$players' model, to the equilibria in the mean field model, remains a much harder problem, since it would necessarily involve studying the convergence of the derivatives of the value functions. We leave these very interesting questions for future research. 
\end{Remark}
 \appendix
 \section{Appendix}

\subsection{Technical proofs of Sections \ref{intro:MFM} and \ref{section:solvingMF}}

\begin{proof}[Proof of Lemma \ref{lemma:soledsrloisfixes}]
Let $(\mu,q,\alpha)\in\mathcal P(\mathcal C)\times \mathfrak P(\mathbb \R) \times\mathcal A$. We set 
$$v_t^{A}(\chi,\xi,\mu, q, \alpha):=\mathbb E^{\mathbb P^{\mu,q, \alpha}}\bigg[K_{t,T}^{X,\mu,q}U_{A}(\xi)+ \int_t^T K_{t,s}^{X,\mu,q}(u_A(s,X,\mu,q_s, \chi_s)-c(s,X,\mu,q_s,\alpha_s))ds\bigg|\mathcal F_t \bigg],$$ 
and $E_{t,T}:= \mathcal E\big( \int_t^T \sigma_s^{-1}(X)b(s,X, \mu, q_s, \alpha_s)ds\big). $
First of all, by H\"older's inequality, the definition of $\Xi$, $\mathcal A$ (recall \eqref{eq:admm}) and Assumptions $(\mathbf{C}^{p,\ell,m,\underline m})$, $(\mathbf{K})$ and $(\mathbf{U})$, we have that the process $v^A(\chi,\xi,\mu,q,\alpha)$ belongs to $\mathbb S_{\rm exp}(\mathbb R)$. Hence\footnote{Notice here that we have to go back to the probability $\P$ to apply the martingale representation theorem. Indeed, as shown in the celebrated example of Tsirelson (see Revuz and Yor \cite{revuz1999continuous}[Chapter IX, Exercise $(3.15)$]), the representation may fail under $\P^{\mu,q,\alpha}$. We would like to thank Sa\"id Hamad\`ene for pointing out this technical problem to us.}, the process $(M_t)_{t\in[0,T]}$ defined for any $t\in[0,T]$ by
$$M_t:=E_{0,t} \left(K_{0,t}^{X,\mu,q} v_t^{A}(\chi,\xi,\mu,q, \alpha)+\int_0^t K_{0,s}^{X,\mu,q}\left(u_A(s,X,\mu,q_s, \chi_s)-c(s,X,\mu,q_s, \alpha_s)\right)ds\right),$$
is a $(\mathbb P, \mathbb F)-$martingale in $\mathcal E(\mathbb R)$. We deduce that there exists an $\mathbb F-$predictable process $\widetilde Z\in\mathbb H^p(\mathbb R)$, for any $p\geq 0$, such that $M_t= M_T-\int_t^T \widetilde Z_s\sigma_s(X) dW_s.$ Applying It{\={o}}'s formula, we obtain
$$dE^{-1}_{t}=-E^{-1}_{t}b(t,X, \mu, q_t,\alpha_t)\sigma^{-1}_t(X) dW_t+  E^{-1}_t |b(t,X, \mu, q_t, \alpha_t)|^2\sigma^{-2}_t(X) dt.$$
Thus, $d(E^{-1}_t M_t)=
\widehat Z_t\sigma_t(X) dW_t^{\mu,q, \alpha,},$
with $\widehat Z_t:=  E^{-1}_{t}\left(-M_tb(t,X, \mu, q_t, \alpha_t)\sigma^{-2}_t(X)+   \widetilde Z_t\right).$ By Assumptions $(\mathbf{B}^{p,\ell,\eta})$ and $(\boldsymbol{\sigma})$, the definition of $\mathcal A$, H\"older's inequality and the fact that $\widetilde Z\in\mathbb H^p(\mathbb R)$ for any $p\geq 1$, we deduce that we also have $\widehat Z\in\mathbb H^p(\mathbb R)$ for any $p\geq 1$. 

\vspace{0.5em}
Finally, applying It\={o}'s formula again and setting 
\begin{equation*}Y_t^{\mu,q,\alpha}(\chi,\xi):=v_t^{A}(\chi, \xi,\mu,q, \alpha),\; Z_t^{\mu,q,\alpha}(\chi,\xi):= \exp\left(\int_0^t k(s,X,\mu,q_s) ds\right) \widehat Z_t,\end{equation*}
we deduce that $(Y_t^{\mu,q,\alpha}(\chi,\xi), Z_t^{\mu,q,\alpha}(\chi,\xi))$ is a solution to BSDE \eqref{edsr:agent:loisfixes}. Moreover, recalling that since $\mathcal F_0$ is not trivial, the Blumenthal $0-1$ law does not hold here, we cannot claim that $Y_0^{\mu,q,\alpha}(\chi,\xi)$ is a constant, and it {\it a priori} depends on the canonical process $\psi$. We therefore have 
$$\mathbb E[Y_0^{\mu,q,\alpha}(\chi,\xi)]=\int_\R Y_0^{\mu,q,\alpha}(\chi,\xi)(x)\lambda_0(dx)=v_0^{A}(\chi, \xi,\mu,q, \alpha).$$

\textcolor{black}{The uniqueness is a classical under Assumption $(\mathbf{K})$ and $(\mathbf{B}^{p,\ell,\eta})$, since the generator is uniformly Lipschitz in $y$, and linear in $z$, with a linearity process which is, by definition of $\mathcal A$, sufficiently integrable, and we refer to \cite{el1997backward} for instance.}
\end{proof}

\begin{proof}[Proof of Lemma \ref{lemma:maxg}]
Under Assumptions $(\mathbf{B}^{p,\ell,\eta})$, $\mathbf{(K)}$ and $(\mathbf{C}^{p,\ell,m,\underline m})$, for any fixed $(s,x,\mu,q,y,z,\chi)\in [0,T]\times \mathcal C\times \mathcal P(\mathcal C)\times \mathfrak P(\mathbb R)\times \mathbb R\times\mathbb R\times\mathbb R_+$, we have that the map $a\longmapsto g(s,x, y, z, \mu,q,a,\chi)$ is continuous. Therefore, if $A$ is bounded, it is a compact subset of $\mathbb R$ and the existence of a bounded maximiser is obvious.

\vspace{0.5em}
Assume now that $A$ is unbounded. Then, it is also clear that for any fixed $(s,x,\mu,q,y,z,\chi)\in [0,T]\times \mathcal C\times \mathcal P(\mathcal C)\times \mathfrak P(\mathbb R)\times \mathbb R\times\mathbb R\times\mathbb R_+$ the map $a\longmapsto g(s,x, y, z, \mu,q,a,\chi)$ is coercive, in the sense that it goes to $-\infty$ as $|a|$ goes to $+\infty$. Therefore, the existence of a maximiser is obvious. Besides, since neither $k$ nor $u_A$ depend on $a$, then any maximiser clearly does not depend on $\chi$ and $y$. Let now $a^\star(s,x,z,\mu,q)\in{\rm{argmax }}_{a\in A}\,g(s,x, y, z, \mu,q,a,\chi)$. If it happens to belong to the boundary of $A$, then it is automatically bounded. Otherwise, if this is an interior maximiser, it satisfies the first order conditions
\begin{align}\label{firstordercondition}
z\partial_ab(s, x, \mu, q_s,a^\star(s,x,z,\mu,q))=\partial_ac(s,x,\mu,q_s,a^\star(s,x,z,\mu,q)).
\end{align} 
Thus, by our assumptions on the derivatives of $c$ and $b$, there exists some positive constant $\kappa$ such that
\begin{align*}
|a^\star(s,x,z,\mu,q)|^{\underline m}
&\leq \kappa|z| |\partial_a b(s, x, \mu, q_s,a^\star(s,x,z,\mu,q))|\\
&\leq \kappa|z|\bigg( 1+b^1(\No{x}_{s,\infty}) +\left(\int_{\mathcal C} \|w\|_{s,\infty}^p \mu(dw)\right)^{\frac1p}+\left(\int_{\mathbb R} |w|^p q(dw)\right)^{\frac1p}\bigg)\\
&\hspace{0.9em}+\kappa|z||a^\star(s,x,z,\mu,q)|^{\ell-1}.
\end{align*}
Therefore, if $a^\star$ is unbounded, there exists $D>0$ such that
\begin{align*}
&|a^\star(s,x,z,\mu,q)|\\
&\leq D |z|^{\frac{1}{\underline m+1-\ell}}\bigg( 1+|b^1(\No{x}_{s,\infty})|^{\frac{1}{\underline m+1-\ell}} +\bigg(\int_{\mathcal C} \|w\|_{s,\infty}^p \mu(dw)\bigg)^{\frac{1}{p(\underline m+1-\ell)}}+\bigg(\int_{\mathbb R} |w|^p q(dw)\bigg)^{\frac{1}{p(\underline m+1-\ell)}}\bigg).
\end{align*}
Hence the desired result for $a^\star$. The growth for $g$ at $a^\star(s,x,z,\mu,q)$ is immediate from our assumptions.
\end{proof}

\begin{proof}[Proof of Theorem \ref{thm:agent}]
The proof of this theorem is similar to the proof of Theorem 4.1 in \cite{elie2016contracting} and based on the so--called martingale optimality principle. We prove it in two steps.\vspace{0.5em}

\noindent \textbf{Step 1: a solution to $\mathbf{(MFG)}(\chi,\xi)$ provides a solution to BSDE \eqref{edsr:agent:max}}.\vspace{0.3em}

\noindent Let $(\mu,q,\alpha^{\star})$ be a solution to $\mathbf{(MFG)}(\chi,\xi)$. Let $\tau\in \mathcal T_{[0,T]}$, the set of $\mathbb F-$stopping times valued in $[0,T]$. We define the following family of random variables $V^A_\tau(\chi,\xi,\mu,q):=\underset{\alpha\in\mathcal A}{\esup}\, v_\tau^{A}(\chi,\xi,\mu, q, \alpha), $
where 
$$v_\tau^{A}(\chi,\xi,\mu, q, \alpha)=\mathbb E^{\mathbb P^{\mu,q, \alpha}}\left[\left.K_{\tau,T}^{X,\mu,q}U_{A}(\xi)+ \int_\tau^T K_{\tau,s}^{X,\mu,q}(u_A(s,X,\mu,q_s, \chi_s)-c(s,X,\mu,q_s,\alpha_s))ds\right|\mathcal F_\tau \right].$$
From for instance Theorem 2.4 in \cite{karoui2013capacities2} as well as the discussion in their Section 2.4.2, we recall that this family satisfies the following dynamic programming principle
\begin{align*}&V^A_\tau(\chi,\xi,\mu,q)\\
&= \underset{\alpha\in\mathcal A}{\esup}\, \mathbb E^{\P^{\mu,q,\alpha}}\left[\left.K_{\tau,\theta}^{X,\mu,q}V_\theta(\chi,\xi,\mu,q)+\int_\tau^\theta K_{\tau,s}^{X,\mu,q}(u_A(s,X,\mu,q_s, \chi_s)-c(s,X,\mu,q_s,\alpha_s))ds\right|\mathcal F_\tau\right], 
\end{align*} for any $\theta\in \mathcal T_{[0,T]}$ with $\tau\leq \theta, \; \mathbb P-a.s.$ We thus notice that for any $\alpha\in \mathcal A$, the family 
$$\left(\mathcal K_{0,\tau}^{X,\mu,q}V^A_\tau(\chi,\xi,\mu,q) +\int_0^\tau K_{0,s}^{X,\mu,q}(u_A(s,X,\mu,q_s, \chi_s)-c(s,X,\mu,q_s,\alpha_s))ds\right)_{\tau\in \mathcal T_{[0,T]}},$$
is a $\mathbb P^{\mu,q,\alpha}-$super--martingale system which can be aggregated (see \cite{della1981sur}) by a unique $\mathbb F-$optional process coinciding with
$$(\mathcal M^\alpha_t)_{t\in [0,T]}:= \left(\mathcal K_{0,t}^{X,\mu,q}V^A_t(\chi,\xi,\mu, q) +\int_0^t K_{0,s}^{X,\mu,q}\left(u_A(s,X,\mu,q_s, \chi_s)-c(s,X,\mu,q_s,\alpha_s)\right)ds\right)_{t\in [0,T]},$$
which is therefore a $\mathbb P^{\mu,q,\alpha}-$super--martingale for any $\alpha\in \mathcal A$. 

\vspace{0.5em}
We now check that $\mathcal M^{\alpha^{\star}}$ is a $\mathbb P^{\mu,q,\alpha^{\star}}-$uniformly integrable martingale. Since $(\mu,q,\alpha^{\star})$ is a solution to $\mathbf{(MFG)}(\chi,\xi)$, we have
\begin{align*}
V_0^A(\chi,\xi,\mu,q)&=v_0^A (\chi,\xi,\mu,q,\alpha^{\star})\\
&= \mathbb E^{\mathbb P^{\mu,q, \alpha^{\star}}}\left[K_{0,T}^{X,\mu,q}U_{A}(\xi)+ \int_0^T K_{0,s}^{X,\mu,q}(u_A(s,X,\mu,q_s, \chi_s)-c(s,X,\mu,q_s,\alpha^{\star}_s))ds \right].
\end{align*}
By the super--martingale property, we thus have
\begin{align*}
V_0^A(\chi,\xi,\mu,q)\geq\mathbb E^{\mathbb P^{\mu,q, \alpha^{\star}}}\left[\mathcal M_t^{\alpha^{\star}} \right]\geq \mathbb E^{\mathbb P^{\mu,q, \alpha^{\star}}}\left[\mathcal M_T^{\alpha^{\star}} \right]= V_0^A(\chi,\xi,\mu,q).
\end{align*}
Hence, $\mathcal M_t^{\alpha^{\star}}= \mathbb E^{\mathbb P^{\mu,q, \alpha^{\star}}}\left[\left.\mathcal M_T^{\alpha^{\star}}\right| \mathcal F_t\right],$ which shows that $\mathcal M^{\alpha^{\star}}$ is a $\mathbb P^{\mu,q,\alpha^{\star}}-$martingale, which is uniformly integrable under Assumptions $(\mathbf{B}^{p,\ell,\eta})$, $(\boldsymbol{\sigma})$, $\mathbf{(K)}$ and $(\mathbf{C}^{p,\ell, m,\underline m})$. Furthermore, it is also clear thanks to H\"older's inequality that $V^A$ belongs to $\mathcal E(\mathbb R)$, since $\alpha^{\star}\in\mathcal A$. By Lemma \ref{lemma:soledsrloisfixes}, we deduce that there exists some process $Z^{\mu,q,\alpha^{\star}}\in \mathbb H_{\rm exp}^{\lambda r}(\mathbb R)$ (recall that the solution of the mean--field game is assumed to be in ${\rm MFI}^r(\chi,\xi)$) such that
\begin{align*}
\nonumber V^A_t( \chi,\xi,\mu,q)=&\ U_A(\xi)+\int_t^T g\big(s,X, V^A_s( \chi,\xi,\mu,q), Z_s^{\mu,q,\alpha^{\star}}(\chi,\xi), \mu,q_s,\alpha^{\star}_s,\chi_s\big)  ds\\
&-\int_t^T Z_s^{ \mu,q, \alpha^{\star}}(\chi,\xi) \sigma_s(X) dW_s.
\end{align*}
We thus deduce that for any $\alpha\in \mathcal A$, we have
\begin{align*}
\nonumber \mathcal M_t^\alpha=&\ \mathcal K_{0,T}^{X,\mu,q}U_A(\xi)+\int_t^T g\big(s,X, V^A_s( \chi,\xi,\mu,q), Z_s^{\mu,q,\alpha^{\star}}(\chi,\xi), \mu,q_s,\alpha^{\star}_s,\chi_s\big)  ds\\
&-\int_t^T Z_s^{ \mu,q, \alpha^{\star}}(\chi,\xi) \sigma_s(X) dW_s^{\mu,q,\alpha}.
\end{align*}
Since $\mathcal M_t^\alpha$ has to be a $\mathbb P^{\mu,q,\alpha}-$super--martingale, we have necessarily for any $\alpha\in \mathcal A$ 
$$ g\big(s,X, V^A_s( \chi,\xi,\mu,q), Z_s^{\mu,q,\alpha^{\star}}(\chi,\xi), \mu,q_s,\alpha^{\star}_s,\chi_s\big) \geq g\big(s,X, V^A_s( \chi,\xi,\mu,q), Z_s^{\mu,q,\alpha^{\star}}(\chi,\xi), \mu,q_s,\alpha_s,\chi_s\big).$$
By setting $Y^\star(\chi,\xi):= V(\chi,\xi,\mu,q),$ $Z^\star (\chi,\xi):= Z^{\mu,q,\alpha^{\star}}(\chi,\xi), $
we have finally proved that the quadruple $(Y^\star(\chi,\xi),Z^\star (\chi,\xi),\mu,q)$ is a solution to BSDE \eqref{edsr:agent:max} and $\alpha^{\star}\in \mathfrak A^{X,Z^\star,\mu,q}$.

\vspace{0.8em}
\noindent \textbf{Step 2: a solution to BSDE \eqref{edsr:agent:max} provides a solution to $\mathbf{(MFG)}(\chi,\xi)$}.\vspace{0.3em} 

\noindent Let now $(Y^\star(\chi,\xi),Z^\star(\chi,\xi),\mu,q)$ be a solution to BSDE  \eqref{edsr:agent:max}. Recall from Lemma \ref{lemma:maxg} together with a measurable selection argument that there exists a process $a^\star(\cdot,X,Z^\star(\chi,\xi),\mu,q_\cdot)\in \mathbb R$ such that
$$a_s^\star:=a^\star(s,X,Z^\star_s(\chi,\xi),\mu,q_s)\in\underset{a\in \mathcal A}{\rm{argmax }}\,g(s,X, Y^\star_s(\chi,\xi), Z^\star_s(\chi,\xi), \mu,q_s,a).$$
Following the computations of Step 1, we obtain that for any $\alpha\in \mathcal A$,
$$(\widetilde{\mathcal M}_t^\alpha)_{t\in [0,T]}:=  \left(\mathcal K_{0,t}^{X,\mu,q}Y_t^\star(\chi,\xi) +\int_0^t K_{0,s}^{X,\mu,q}(u_A(s,X,\mu,q_s, \chi_s)-c(s,X,\mu,q_s,\alpha_s))ds\right)_{t\in [0,T]},$$
is a $\mathbb P^{\mu,q,\alpha}-$super--martingale and a $\mathbb P^{\mu,q,a^\star}-$martingale, which is uniformly integrable since $Y^\star(\chi,\xi)\in \mathcal E(\mathbb R) $. Remember that by Lemma \ref{lemma:soledsrloisfixes} we have
$$v_0^A(\chi,\xi,\mu,q,a^\star)=\mathbb E^{\P}[Y_0^\star(\chi,\xi)]=\mathbb E^{\P^{\mu,q,a^\star}}[Y_0^\star(\chi,\xi)],$$
since the only randomness in $Y_0^\star(\chi,\xi)$ comes from the canonical process $\psi$, so that it does not matter which probability measure you use in the expectation. We thus have for any $\alpha\in \mathcal A$, using Fubini's Theorem and the fact that $\widetilde M_0^\alpha$ is actually independent of $\alpha$ by construction
\begin{align*}
v_0^A(\chi,\xi,\mu,q,a^\star)=\mathbb E^{\P^{\mu,q,a^\star}}[Y_0^\star(\chi,\xi)]=\mathbb E^{\mathbb P^{\mu,q,a^\star}}\Big[ \widetilde M_T^{a^\star}\Big]= \mathbb E^{\mathbb P^{\mu,q,a^\star}}\Big[\widetilde M_0^{\alpha}\Big]= \mathbb E^{\mathbb P^{\mu,q,\alpha}}\Big[\widetilde M_0^\alpha\Big]&\geq   \mathbb E^{\mathbb P^{\mu,q,\alpha}}\Big[ \widetilde M_T^\alpha\Big]\\
&= v_0^A(\chi,\xi,\mu,q,\alpha).
\end{align*}
It therefore means that we have $V_0^A(\chi,\xi,\mu,q)=v_0^A(\chi,\xi,\mu,q,a^\star)$. Furthermore, we have by definition that $\mathbb P^{\mu,q, a^\star}\circ (X)^{-1}=\mu $ and $\mathbb P^{\mu,q,a^\star}\circ (a^\star_t)^{-1}=q_t $ for Lebesgue almost every $t\in[0,T]$. Hence, it simply remains to prove that $a^\star$ is indeed in $\mathcal A$ to deduce that $(\mu,q,a^\star)$ is a solution to $\mathbf{(MFG)}(\chi,\xi)$. Recall that $Z^\star\in \mathbb H_{\rm exp}^{\lambda r }(\mathbb R)$. We then obtain for any $h\geq 1$ and for some constants $\kappa,\tilde\kappa>0$ and by denoting $\underline r$  the conjugate of $r$
 \begin{align*}
\mathbb E\left[\exp\left(h\int_0^T|a^\star_t|^{\ell+m}dt\right)\right]&\leq \kappa\mathbb E\left[\exp\left(h\tilde\kappa\int_0^T |Z_t^\star|^{\lambda }\left(1+b^1(\| X\|_{t,+\infty})^\lambda \right)dt\right)\right]\\
&\leq\kappa\mathbb E\left[\exp\left(h\tilde\kappa\left(\int_0^T |Z_t^\star|^{\lambda r} dt \right)^\frac1r \left(\int_0^T|b^1(\| X\|_{t,+\infty})|^{\lambda \underline r} \right)dt\right)^\frac{1}{\underline r}\right] \\
&\leq\kappa\mathbb E\left[\exp\left(h\tilde\kappa \int_0^T |Z_t^\star|^{\lambda r} dt \right)\right]<+\infty,
\end{align*}
by using Lemma \ref{lemma:maxg}, H\"older's Inequality, Young's Inequality and the definition of $Z^\star$. We now show that $a^\star\in \mathcal A_{\tilde{\varepsilon}}$ for some $\tilde\varepsilon>0$. Recall that $\ell\leq m$. Then, there exists a positive constant $\kappa>0$ which may vary from line to line such that for any $h,k>1$ we have by H\"older's inequality
\begin{align*}
&\mathbb E\left[\exp\left(\frac{k}{2}\int_0^T|\sigma_t^{-1}(X) b(t, X, \mu, q_t,a^\star_t)|^2dt\right)\right]\\
&\leq \kappa\mathbb E\left[\exp\left(\frac{3M^2k}{2} \int_0^T \left(C^2|a^\star_t|^{2\ell}+|b^0(\|X\|_{t,\infty})|^2dt\right) \right)\right]\\
&\leq \kappa  \mathbb E\left[\exp\left(\frac{3M^2C^2kh}{2(h-1)}\int_0^T|a^\star_t|^{2\ell} dt\right)\right]^{1-\frac1h} \mathbb E\left[\exp\left(\frac{3hk M^2}{2}\int_0^T|b^0(\| X\|_{t,\infty})|^2 dt\right)\right]^\frac1h,
\end{align*}
which is finite as soon as $ph\leq\eta$.

\vspace{0.5em}
Thus, since $\eta>1$, we can choose $k>1$, so that by L\'epingle and M\'emin \cite{lepingle1978sur}[Th\'eor\`eme $a)$], we deduce that there exists $\tilde\varepsilon>0$ such that
$$\mathbb E\left[\left(\mathcal E\left( \int_0^T   \sigma_t^{-1}(X) b(t, X, \mu, q_t,a^\star_t)dW_t \right)\right)^{1+\tilde\eps}\right]<+\infty.$$
\end{proof}

\begin{proof}[Proof of Corollary \ref{corollary:xi}]
Let $(\chi,\xi)\in \Xi$. From Theorem \ref{thm:agent}, there exists $(Y^\star,Z^\star,\mu,q)\in \mathcal E(\mathbb S)\times \mathbb H^{\lambda r}_{\rm exp}(\mathbb R)\times\mathcal P(\mathcal C)\times\mathfrak P(\mathbb R)$ such that $(Y^\star,Z^\star,\mu,q)$ is a solution to BSDE \eqref{edsr:agent:max}. Besides, the proof of the Step 1 of Theorem \ref{thm:agent} shows that $\mathbb E[Y_0^\star]=V_0^A(\chi,\xi,\mu,q)\geq R_0$. Thus, we deduce that the system \eqref{systemeEDS} admits a solution with parameters $Y_0:=\mathbb E[Y_0^\star]$ and $Z:=Z^\star$. Thus $(\chi,\xi)\in \widehat \Xi$.

\vspace{0.5em}
\noindent Conversely, let $(\chi,\xi)\in \widehat \Xi$ with $\xi=U_A^{(-1)}\big(Y_T^{Y_0,Z}(\chi)\big)$ where the quadruple $(Y^{Y_0,Z}(\chi), Z, \mu,q)$ is a solution to the system \eqref{systemeEDS} with $Y_0\geq R_0$ and $Z\in\mathcal Z(\chi)$. Then, according to Theorem \ref{thm:agent}, we deduce that $(\chi,\xi)\in \Xi$.
\end{proof}

 \subsection{Technical proofs of Section \ref{section:ex}}
 
 \begin{proof}[Proof of Theorem \ref{thm:optimal:ex}]
 
 For the considered model, we will show that there exists a smooth solution to HJB equation \eqref{HJB:exemple} and that the optimal $z$ is in $\mathcal D$. This leads us to set the following {\it ansatz}

\begin{Ansatz}\label{ansatz}
The optimal $z$ for the HJB equation \eqref{HJB:exemple} is in $\mathcal D$.
\end{Ansatz}
Considering this {\it ansatz}, and by denoting $\underline U(t,\nu)$ the correspond dynamic version of the problem of the Principal with $Z$ restricted to $\mathcal D$, we obtain for any $(t,\nu)\in [0,T]\times \mathcal P(\mathcal C^2)$  
\begin{align}
\nonumber\underline{U}^P(t,\nu)=&\ \sup_{z\in \mathcal D} \left\{\int_t^TH(u,z_u) du\right\}+\int_\R x\nu^1(dx) e^{\kappa (T-t)}-\int_\R x\nu^2(dx)\\
\label{valeurgamma}&-\gamma\int_t^T  e^{\kappa (T-u)} \left(e^{2\alpha (u-t)}V_{\nu^1}(t)+\frac{\sigma^2}{2\alpha}(e^{2\alpha(u-t)}-1)\mathbf{1}_{\alpha>0}+ \sigma^2(u-t) \mathbf 1_{\alpha=0}\right) du,
\end{align}
where
$$H(u,z):= (1+\beta_2)\left(\frac{\left|z\right|}{c}\right)^{\frac{1}{n-1}}e^{\kappa(T-u)}-\frac{\left|z\right|^{\frac{n}{n-1}}}{c^{\frac{1}{n-1}}n}. $$
Thus,  $z^\star_u:=(1+\beta_2)e^{\kappa(T-u)}$ is optimal for this sub-optimal problem. As for the corresponding value, we need to distinguish several cases.

\vspace{0.5em}
$\bullet$ If $\alpha=\beta_1=0$, we get
\begin{align*}\underline{U}^P(t,\nu)=&\ (T-t)\left( \frac{(1+\beta_2)^{\frac n{n-1}}}{c^\frac1{n-1}}\frac{n-1}n\right)+\int_\R x\nu^1(dx) -\int_\R x\nu^2(dx)\\
&-\gamma(T-t)\left(\int_\R |x|^2 \nu^1(dx)-\left(\int_\R x \nu^1(dx)\right)^2\right)-\gamma\sigma^2\frac{(T-t)^2}2.
\end{align*}
$\bullet$ If $\alpha=0$ and $\beta_1>0$, we get
\begin{align*}\underline{U}^P(t,\nu)=&\ \int_t^T(1+\beta_2)^{\frac{n}{n-1}}\frac{e^{\beta_1\frac{n}{n-1}(T-u)}}{c^\frac{1}{n-1}} \left(1-\frac1n \right)du+ \int_\mathbb R x \nu^1(dx) e^{\beta_1(T-t)}-\int_\mathbb R x \nu^2(dx)\\
&-\gamma\frac{e^{\beta_1(T-t)}-1}{\beta_1}\left(\int_\R |x|^2 \nu^1(dx)-\left(\int_\R x \nu^1(dx)\right)^2\right)-\gamma\sigma^2\int_t^T e^{\beta_1(T-u)}(u-t) du.
\end{align*}
$\bullet$ If $\alpha>0$ and $\alpha \neq \beta_1$, we get
\begin{align}
\nonumber\underline{U}^P(t,\nu)=&\ \int_t^T(1+\beta_2)^{\frac{n}{n-1}}\frac{e^{\kappa\frac{n}{n-1}(T-u)}}{c^\frac{1}{n-1}} \left(1-\frac1n \right)du+ \int_\mathbb R x \nu^1(dx) e^{\kappa(T-t)}-\int_\mathbb R x \nu^2(dx)\\
&-\frac{\gamma}{2\alpha-\kappa}(e^{2\alpha (T-t)}-e^{\kappa (T-t)}) \left(\int_\R |x|^2 \nu(dx)-\left(\int_\R x \nu(dx)\right)^2+\frac{\sigma^2}{2\alpha}\right)-\frac{\gamma\sigma^2}{2\alpha \kappa}(1-e^{\kappa (T-t)}).
\label{underlineU}
\end{align}
$\bullet$ If $\alpha>0$ and $\alpha=\beta_1$ we have
\begin{align*}
\underline{U}^P(t,\nu)=&\ \int_t^T(1+\beta_2)^{\frac{n}{n-1}} \frac{e^{\kappa\frac{n}{n-1}(T-u)}}{c^\frac{1}{n-1}} \left(1-\frac1n \right)du+ \int_\mathbb R x \nu^1(dx) e^{\kappa(T-t)}-\int_\mathbb R x \nu^2(dx)\\
&-\gamma e^{2\alpha (T-t)} (T-t)\left(\int_\R |x|^2 \nu^1(dx)-\left(\int_\R x \nu^1(dx)\right)^2+\frac{\sigma^2}{2\alpha}\right)-\frac{\gamma\sigma^2}{4\alpha^2 \kappa}(1-e^{2\alpha(T-t)}).
\end{align*}

We only prove the result when $\alpha\neq \beta_1$ and $\alpha,\beta_1>0$ since the other cases can be obtained similarly. The proof is divided in three steps. In the first one, we show that $ \underline{U}^P(t,\nu)$ is a smooth solution to the HJB equation \eqref{HJB:exemple}. In the second step, we provide a verification theorem by adapting the proof of \cite[Theorem 4.1]{pham2015bellman}, and in the last step we compute explicitly the optimal contract $\xi^\star$.

\paragraph{Step 1: a smooth solution to the HJB equation \eqref{HJB:exemple}.} 
Direct computations show that
%
for any $z\in \tilde {\mathcal Z}_p$
\begin{align*}
&\int_\mathbb R \mathbb L_t^z \underline{U}^P(t,\nu)(x) \nu(dx)\\
& = \int_\R  \left(\left( \frac{\left|z(t,x,\nu)\right|}{c}\right)^{\frac{1}{n-1}}(1+\beta_2) e^{\kappa(T-t)}- \left(\frac{\left|z(t,x,\nu)\right|^{n}}{cn^{n-1}}\right)^{\frac{1}{n-1}}\right)\nu(dx)+ \kappa\int_\R x\nu^1(dx)e^{\kappa(T-t)} \\
&\hspace{0.9em}-\left(2\frac{\gamma\alpha}{2\alpha-\kappa}(e^{2\alpha (T-t)}-e^{\kappa (T-t)})+\gamma e^{\kappa(T-t)} \right)\left( \int_\mathbb R |x|^2 \nu^1(dx)-\left(\int_\mathbb R x\nu^1(dx)\right)^2\right)\\
&\hspace{0.9em}-\frac{\gamma\sigma^2}{2\alpha-\kappa}(e^{2\alpha (T-t)}-e^{\kappa (T-t)}).
\end{align*}
Notice that for any $(t,x,\nu)\in [0,T]\times\R^2\times  \mathcal P(\mathcal C^2)$, the maximum of
$$z\in \tilde{\mathcal Z_p}\longmapsto \left( \frac{\left|z(t,x,\nu)\right|}{c}\right)^{\frac{1}{n-1}}(1+\beta_2) e^{\kappa(T-t)}- \left(\frac{\left|z(t,x,\nu)\right|^{n}}{cn^{n-1}}\right)^{\frac{1}{n-1}}$$ is clearly $z^\star_t:=(1+\beta_2)e^{\kappa(T-t)}$. Thus, we have 
\begin{align*}
&\sup_{z\in \tilde {\mathcal Z}_p}\left\{ \int_{\R^2}  \left(\left( \frac{\left|z(t,x,\nu)\right|}{c}\right)^{\frac{1}{n-1}}(1+\beta_2) e^{\kappa(T-t)}- \left(\frac{\left|z(t,x,\nu)\right|^{n}}{cn^{n-1}}\right)^{\frac{1}{n-1}}\right)\nu(dx)\right\}\\
&=(1+\beta_2)^{\frac{n}{n-1}} \frac{e^{\kappa\frac{n}{n-1}(T-t)}}{c^\frac{1}{n-1}} \left(1-\frac1n \right).
\end{align*}

Thus, the HJB Equation \eqref{HJB:exemple} is satisfied by $\underline U^P$.

\paragraph{Step 2: Verification result.} 
Using the fact that $\underline U^P$ solves \eqref{HJB:exemple}, we thus get from the verification Theorem \ref{thm:verif:depend}
that $z^\star_t:=(1+\beta_2)e^{\kappa(T-t)}$ is a maximiser for the HJB equation \eqref{HJB:exemple} in $\mathcal D$, then in $\tilde {\mathcal Z}^p$. Thus, we deduce that $\underline U^P_0=U^P_0$, \textcolor{black}{so that the Ansatz holds,} with the optimal control $z^\star_t:=(1+\beta_2)e^{\kappa(T-t)}, \, t\in [0,T]$.

\paragraph{Step 3: computation of the optimal contract and admissibility.} Notice that the optimal effort of the Agent is $a^\star_u:= a^\star_u(z^\star)=(1+\beta_2)^{\frac{1}{n-1}}\left( \frac{e^{(\alpha+\beta_1) (T-u)}}{c}\right)^{\frac{1}{n-1}}$. By remembering the discussion of Section \ref{section:admissibility}, their exists a solution $(X,Y^{R_0,z^\star},\mu^\star,q^\star)$ to the system \ref{systemeEDS} and according to Corollary \ref{corollary:xi},
 $(\mu^\star,q^\star,a^\star)$ is a solution to \textbf{(MFG)}$(\xi^\star)$ in $\text{MF}^r(\xi^\star)$ for any $r>1$ with $\xi^\star=Y_T^{R_0,z^\star}$. 
 
\vspace{1em}
Denote now $\mathbb P^\star:=\mathbb P^{\mu^\star,q^\star,a^\star}$ and $W^\star:=W^{\mu^\star,q^\star,a^\star}$. Let $t\in [0,T]$, we set 
$$f(t):= \mathbb E^{\star}[X_t],\ g(t):=\mathbb E^{\star}\big[|X_t|^2\big],\ z_n(t):=\left(\frac{\left|z^\star_t\right|}{c}\right)^{\frac{1}{n-1}},\  \text{for any $t\in [0,T], \, Z\in \mathcal C$}.$$ 
Thus, applying It\=o's formula, we obtain the following system of ODEs for any $0\leq t\leq s\leq T$
\begin{empheq}[left=\empheqlbrace]{align}
    \label{ode:f:bis}  f'(t)&= (1+\beta_2)z_n(t)+(\alpha+\beta_1) f(t)- \gamma g(t)+\gamma |f(t)|^2,\ f(0)=\int_{\R}x\lambda_0(dx),\\
     \label{ode:g:bis}    g'(t)&=2\alpha g(t)+ 2f(t)(f'(t)-\alpha f(t))+\sigma^2,\ g(0)=\int_{\R}x^2\lambda_0(dx).
\end{empheq}

Using \eqref{ode:g:bis} we have
$$g(t)= \big(g(0)-f^2(0)\big)e^{2\alpha t}+f^2(t)+\frac{\sigma^2}{2\alpha}(e^{2\alpha t }-1) . $$

The ODE \eqref{ode:f:bis} thus become for any $0\leq t\leq T$
\begin{align}
\label{ode:f:linear}f'(t)&=\kappa f(t)+H_n(t),\ f(0)=\int_{\R}x\lambda_0(dx),
\end{align}
with $H_n(t):= (1+\beta_2)z_n(t)- \gamma \frac{\sigma^2}{2\alpha}(e^{2\alpha t}-1)-\gamma \big(g(0)-f^2(0)\big)e^{2\alpha t}.$
Thus, the linear ODE \eqref{ode:f:linear} has a unique solution given by
\begin{align*}
f(t)&=\int_{\R}x\lambda_0(dx) e^{\kappa t }+\int_0^t e^{\kappa(t-u)} H_n(u) du
=\lambda_1 e^{\kappa t}+ \lambda_2 e^{-\frac{\kappa }{n-1}t}+\lambda_3 e^{2\alpha t }+\lambda_4,
\end{align*}
with
\begin{align*}
\lambda_1&:=\int_{\R}x\lambda_0(dx)+(1+\beta_2)\left(\frac{(1+\beta_2)e^{\kappa T}}{c}\right)^{\frac{1}{n-1}}\frac{n-1}{\kappa n}+\frac{\gamma\sigma^2}{\kappa(2\alpha-\kappa)}+\frac{\gamma}{2\alpha-\kappa}\big(g(0)-f^2(0)\big),\\
\lambda_2&:= -(1+\beta_2)\left(\frac{(1+\beta_2)e^{\kappa T}}{c}\right)^{\frac{1}{n-1}}\frac{n-1}{\kappa n},\
 \lambda_3:=-\frac{\gamma\sigma^2}{2\alpha(2\alpha-\kappa)}-\frac{\gamma\big(g(0)-f^2(0)\big)}{2\alpha-\kappa},\
  \lambda_4:=-\frac{\gamma\sigma^2}{2\alpha\kappa}.
\end{align*}
Thus, the optimal contract is given, after tedious but simple computations
\begin{align*}
\xi^\star&=
\delta-\alpha (1+\beta_2) \int_0^T e^{\kappa(T-t)} X_t dt +(1+\beta_2) \int_0^T e^{\kappa(T-t)} dX_t,
\end{align*}

with 
\begin{align}\label{delta_optimal}
\nonumber \hspace{-1em}\delta
:=&\ R_0-\frac{\left(1+\beta_2-\frac1n\right)(1+\beta_2)^{\frac{n}{n-1}}}{n(\alpha+\beta_1)c^{\frac{1}{n-1}}}\left( e^{(\alpha+\beta_1)\frac{n}{n-1}T} -1\right)\\
\nonumber&-(1+\beta_2)e^{(\alpha+\beta_1)T}\Big[ T\beta_1 \left(f(0) + \frac{\gamma\sigma^2}{(\alpha+\beta_1)(\alpha-\beta_1) }+\frac{\gamma V_{\lambda_0}}{\alpha-\beta_1}\right)+\frac12 \frac{\gamma\sigma^2\left(1-e^{-(\alpha+\beta_1)T} \right)}{(\alpha+\beta_1)^2}\\
\nonumber & -\frac{\gamma \alpha \left( \frac{\sigma^2}{2\alpha} +V_{\lambda_0}\right)}{(\alpha-\beta_1)^2} \left(e^{(\alpha-\beta_1)T}-1 \right) +\frac{(1+\beta_2)(n-1)\beta_1}{nc^{\frac{1}{n-1}}(\alpha+\beta_1)} e^{\frac{\alpha+\beta_1}{n-1} T} \left(T-\frac{n-1}{n(\alpha+\beta_1)} \left(1-e^{-\frac{n}{n-1}(\alpha+\beta_1)T } \right)\right)\Big].\\
\end{align}

 \end{proof}
 
 \begin{proof}[Proof of Theorem \ref{thm:extension}]
First notice that since $h$ is clearly coercive in $z$, there exists at least $z^\star\in \mathcal D$ maximising $h$. After tedious but easy computations, we prove that the uniqueness of the maximizer in $ \mathcal D$ of $h$ holds. Following the proof of Theorem \ref{thm:optimal:ex}, the corresponding HJB equation is 
\begin{equation}
  \left\{
      \begin{aligned}\label{HJB:exemple:variance}
       &-\partial_t v(t,\nu)-\sup_{z\in \tilde {\mathcal Z}_p}\left(\int_\mathbb R \mathbb L_t^z v(t,\nu)(x) \nu(dx) \right)=0\\
       &v(T,\nu)= \int_{\mathbb R^2} (x-y) \nu(dx,dy)+2\lambda_{X\xi} \left(\int_{\R^2} xy \nu(dx,dy)  -\int_\R x \nu^1(dx)\int_\R y \nu^2(dy) \right)\\
       &\hspace{3.8em}-(\lambda_{X}+\lambda_{X\xi}) \left(\int_\R |x|^2 \nu^1(dx) -\left(\int_\R x \nu^1(dx) \right)^2\right)\\
       &\hspace{3.8em}-(\lambda_\xi+\lambda_{X\xi}) \left(\int_\R |y|^2 \nu^2(dy) -\left(\int_\R y \nu^2(dy) \right)^2\right).
      \end{aligned}
    \right.
\end{equation}
As previously, by considering the {\it Ansatz} \ref{ansatz}, and by denoting $\underline U(t,\nu)$ the dynamic version of the problem the Principal restricted to deterministic $Z$, we can compute explicitly $\underline U(t,\nu)$ for any $(t,\nu)\in [0,T]\times \mathcal P(\mathcal C^2)$.

Similarly to the proof of Theorem \ref{thm:optimal:ex}, we show that HJB Equation \eqref{HJB:exemple:variance} is satisfied by $\underline U^P$. By adapting the second step of the proof of Theorem \ref{thm:optimal:ex}, we deduce that $\underline U^P$ is the value function of the Principal with the optimal control $z^\star\in \mathcal D$ maximising $h$.
\end{proof}

\begin{proof}[Proof of Theorem \ref{thm:NplayerMF}]
From \eqref{zstarN}, we deduce that $(i)$ holds. We now turn to the proof of $(ii)$. First recall that 
$$dX^N_t= \left(a^{N,\star}_t + B_N X^N_t\right)dt +\sigma dW_t^{N,\star},\, X_0^N=\psi^N.$$ Thus, by denoting $S_t^N:= \sum_{i=1}^N X^{N,i}_t$ for any $t\in [0,T]$ we have for any $i\in \{1,\dots,N\}$
$$dX^{N,i}_t= \left( \frac{\exp((\alpha+\beta_1) (T-t))}c + \alpha X_t^{N,i}+ \frac{\beta_1}N S_t^N\right)dt +\sigma d(W_t^{N,a^{N,\star}})^i.$$
Notice now that 
$$ d\left(\frac{S_t^N}N\right)= \left( \frac{\exp((\alpha+\beta_1) (T-t))}c +\left(\alpha+\beta_1 \right) \frac{S_t^N}N\right)dt +\sigma \frac{\mathbf 1_N dW_t^{N,a^{N,\star}}}N.$$
Then, since $\frac{S^N}N$ is an Ornstein--Uhlenbeck process, we get
\begin{align*}\frac{S^N_t}N
&=\frac1N\sum_{i=1}^N \psi^{N,i}e^{\kappa t}+\frac{1}{2\kappa c}\left(e^{\kappa(T+t)}- e^{\kappa (T-t)}\right) +\int_0^t \sigma e^{-(\alpha+\beta_1)(s-t)}\frac{\mathbf 1_N}N \cdot dW_s^{N,a^{N,\star}}.
\end{align*}
Hence, using the stochastic Fubini's theorem, we get
\begin{align*}X^{N,i}_t=
 \psi^{N,i}+\int_0^t (\alpha X_s^{N,i}+\theta^N_s)ds+\int_0^t\sigma^N_s \cdot dW_s^{N,a^{N,\star}},
\end{align*}
where for any $t\in [0,T]$
$$\theta^N_t:=  \frac{\exp(\kappa (T-t))}c + \beta_1\frac1N\sum_{i=1}^N \psi^{N,i}e^{\kappa t}+\frac{\beta_1}{2\kappa c}\left(e^{\kappa(T+t)}- e^{\kappa (T-t)}\right),\; \sigma^N_t= \frac{\sigma}\kappa(1-e^{-\kappa (s-t)}) \frac{\mathbf 1_N}N+\sigma e_i.$$
Therefore, $X^{N,i}$ is an Ornstein--Uhlenbeck process with parameters $\alpha,\theta^N_t$ and volatility $\sigma^N_t$. By setting $\theta^\star:=  \frac{\exp(\kappa (T-t))}c+ f(t)$ where $f(t)$ denotes the solution to ODE \eqref{ode:f:linear}, we deduce that the following convergences hold
$$\lambda_0\circ(\theta_t^N)^{-1}\overset{\rm weakly}{\underset{N\to+\infty}{\longrightarrow}}\lambda_0\circ(\theta^\star_t)^{-1}, $$
 and
$$\mathbb P^{a^{N,\star}}_N\circ\left(\int_0^\cdot\sigma^N_s \cdot dW_s^{N,\star}\right)^{-1}\overset{\rm weakly}{\underset{N\to+\infty}{\longrightarrow}} \mathbb P^{a^{\star}}\circ\left(\int_0^\cdot\sigma dW_s^{\star}\right)^{-1}.$$
It is then clear that the required convergence for $X^{N,i}$ holds. The fact that $\xi^{N,\star}$ is optimal for the $N$--players' model is a direct consequence of \cite{elie2016contracting,mastrolia2017moral}. Notice that for any $i\in \{1,\dots, N \}$
\begin{align*}
(\xi^{N,\star})^i
&=  R_0-\int_0^T \frac{\exp(2\kappa (T-t))}{2c}dt-\int_0^T e^{\kappa (T-t)}\left(\alpha X_t^{N,i}+\beta_1 \frac1N\sum_{i=1}^N X_t^{N,i} \right)dt+\int_0^T e^{\kappa(T-t)}dX^{N,i}_t.\end{align*}
By the above computations, it is clear that for any $t\in[0,T]$, the law of $N^{-1}S_t^N$ under $\P_N^{a^{N,\star}}$ converges weakly to a Dirac mass at $\mathbb E^\star[X^\star_t]$. Hence the desired result $(ii)$.
\end{proof}

 \bibliographystyle{plain}
 \small
\bibliography{bibliographyDylan}

 \end{document}